\newcommand{\TheTitle}{The Effect of Crystal Symmetries on the Locality of Screw Dislocation Cores}
\newcommand{\shortTitle}{Locality of Screw Dislocation Cores}
\newcommand{\TheAuthors}{Julian Braun, Maciej Buze, and Christoph Ortner}
\headers{\shortTitle}{\TheAuthors}
\title{{\TheTitle}\thanks{\funding{MB is supported by EPSRC as part of the MASDOC DTC, Grant No. EP/HO23364/1. JB and CO are supported by ERC Starting Grant 335120.}}}
\author{
  Julian Braun\thanks{Mathematics Institute, University of Warwick, Coventry, CV4 7AL, UK.}
  \and
  Maciej Buze\footnotemark[2]
  \and
  Christoph Ortner\footnotemark[2]
}
\numberwithin{theorem}{section}
\Crefname{subsection}{Section}{Sections}
\DeclareMathOperator{\modulo}{mod}
\DeclareMathOperator{\tr}{tr}
\DeclareMathOperator{\divo}{div}
\DeclareMathOperator{\divRc}{Div}   
\def\DRc{D}                         
\DeclareMathOperator{\Id}{Id}
\DeclareMathOperator{\sym}{sym}
\DeclareMathOperator{\spano}{span}
\DeclareMathOperator{\spt}{spt}
\DeclareMathOperator{\Oo}{O}
\newcommand{\sfrac}[2]{{\textstyle \frac{#1}{#2}}}
\newcommand{\R}{\mathbb{R}}
\newcommand{\Rc}{\ensuremath{\mathcal{R}}}
\newcommand{\Hcc}{\ensuremath{\dot{\mathcal{H}}^1}}
\newcommand{\La}{\Lambda}
\newcommand{\Hhom}{H}  
\newcommand{\<}{\langle}
\renewcommand{\>}{\rangle}
\newcommand{\Z}{\mathbb{Z}}
\newcommand{\C}{\mathbb{C}}
\newcommand{\N}{\mathbb{N}}
\newcommand{\eps}{\varepsilon}
\begin{document}

\maketitle
\begin{abstract}
  In linearised continuum elasticity, the elastic strain due to a straight
  dislocation line decays as $O(r^{-1})$, where $r$ denotes the distance to the
  defect core. It is shown in \cite{EOS2016} that the {\em core correction} due to nonlinear and discrete (atomistic) effects decays like $O(r^{-2})$.

  In the present work, we focus on screw dislocations under pure anti-plane
  shear kinematics. In this setting we demonstrate that an improved decay
  $O(r^{-p})$, $p > 2$, of the core correction is obtained when crystalline
  symmetries are fully exploited and possibly a simple and explicit correction
  of the continuum far-field prediction is made.

  This result is interesting in its own right as it demonstrates that, in some
  cases, continuum elasticity gives a much better prediction of the elastic
  field surrounding a dislocation than expected, and moreover has practical
  implications for atomistic simulation of dislocations cores, which we discuss
  as well.
\end{abstract}

\begin{keywords}
    screw dislocations, anti-plane shear, lattice models, regularity,
    defect core
\end{keywords}

\begin{AMS}
   	35Q74, 49N60, 70C20, 74B20, 74G10, 74G65
\end{AMS}

\section{Introduction}
\label{sec:intro}
Crystalline solids consist of regions of periodic atom arrangements, which are
broken by various types of defects. Crystalline defects can be separated into an
elastic far-field which can normally be described by continuum linearised
elasticity (CLE) and a defect core which is inherently atomistic and determines,
for example, mobility, formation energy (and hence concentration), and so forth.

\def\up{u_{\rm ff}}
\def\uc{u_{\rm core}}
\def\L{\Lambda}

To make this idea concrete, let $\L \subset \R^d$ be a crystalline lattice
reference configuration and let $u : \L \to \R^d$ be an equilibrium displacement field under some interaction law (see \S~\ref{model}). The point of view advanced in \cite{EOS2016} is to decompose $u =\up + \uc$ where $\up$ is a {\em far-field predictor} solving a CLE equation enforcing the presence of the defect of interest and $\uc$ is a {\em core corrector}.  For example, it is shown in \cite{EOS2016} that for dislocations $|D \up(x)| \sim |x|^{-1}$ while $|D \uc(x)| \lesssim |x|^{-2} \log |x|$ where $D$ denotes a discrete gradient operator. The fast decay of the corrector
$\uc$ encodes the ``locality'' of the defect core (relative to the far-field).

The present work is the first in a series that introduces and developes
techniques to substantially improve on the CLE far-field description. The
overarching goal is to derive ``higher-order'' models for the
far-field predictor $\up$, which yield the same asymptotic behaviour as the CLE
predictor (i.e., the same far-field boundary condition) but a more localised
corrector. For example, in the case of a dislocation we seek $\up$ such that $u =
\up + \uc$ with $|D \uc(x)| \lesssim |x|^{-p}$ and $p > 2$. Constructions of
this kind have a multitude of applications. They are interesting in their own
right in that they give improved estimates on the region of validity of
continuum mechanics. They may also be employed to more effectively construct
models for multiple defects along the lines of \cite{HO2014}. A key motivation
for us is that they yield a new class of boundary conditions for atomistic
simulations that capture the far-field behaviour more accurately; this gives
rise to improved algorithms for atomistic simulation defects; see
\S~\ref{numerics} for more detail.

In the present work, to demonstrate the potential of our approach and outline
some of the key ideas required to carry out this programme, we focus on screw
dislocations under anti-plane shear kinematics, in the cubic, hexagonal, and
body-centred-cubic (BCC) lattices. The scalar setting, and the ability to
exploit specific lattice symmetries, simplifies several constructions and
proofs.

In forthcoming papers, in particular \cite{MainPaper}, we will discuss
generalisations to vectorial deformations of general straight dislocations
without any symmetry assumptions on the host crystal. In particular the absence
of the symmetries we employ in the present work introduces a non-trival coupling
between the core and the far-field predictor. The general idea that persists is
that there is a development $u = u_0 + u_1 + \cdots + u_n + u_{\rm rem}$ of the
solution, where the terms $u_0, u_1, \dots, u_n$ are given by simpler theories (e.g., linear PDEs) and the remainder $u_{\rm rem}$ has a higher decay rate.

Aside from providing a simplified introduction to \cite{MainPaper}, the present
work contains results that are interesting in their own right due to the fact
that anti-plane models of screw dislocations are particular popular in the
mathematical analysis literature \cite{AdLGP2014, Hudson2017, HO2014, Ponsiglione2007} as a model problem for the more complex edge, mixed, and curved
dislocations. Of particular note about our results here are:

(1) A combination of rotational and anti-plane reflection symmetries yield
surprisingly high decay of the core corrector to the CLE predictor; see
Theorem~\ref{thm:highsym}. This was numerically observed but unexplained in
\cite{HO2014}. The key observation to obtain this result is that the CLE
predictor satisfies additional PDEs, in particular the minimal surface equation,
which naturally occurs in higher-order expansions of the atomistic forces.

(2) In a BCC crystal, due to the lack of anti-plane reflection symmetry, a
nonlinear correction to the far-field predictor is required to improve the decay
of the core corrector. One then expects that the dominant error contribution is
the Cauchy--Born anti-discretisation error. The results of \cite{braun16static, emingstatic, ortnertheil13}  suggest that the resultant corrector should decay
as $O(\lvert x \rvert^{-3})$, however exploiting crystal symmetries reveals that the
Cauchy--Born error is of higher order than expected and one even obtains a
corrector decay of $O(\lvert x \rvert^{-4})$.

Finally, we remark that our analysis is carried out for short-ranged interatomic
many-body potentials, however the resulting algorithms are applicable to
electronic structure models rendering them an efficient and attractive
alternative to complex and computationally expensive multi-scale schemes e.g, of
atomistic/continuum or QM/MM type; see  \cite{2015-qmtb2, 2013-atc.acta} and
references therein.

{\bf Outline: } In \Cref{sec:mainresults} we describe in details our models and
assumptions, and state our main results. Here, \Cref{symsec} is dedicated to the
cubic and hexagonal lattice, while \Cref{BCC} discusses the BCC lattice. In
\Cref{numerics} we present the resulting new numerical scheme including a
convergence analysis. Our conclusions can be found in \Cref{sec:Conclusion}. Finally \Cref{Proofs} contains the proofs of the main results.

\section{Main results}
\label{sec:mainresults}

\subsection{Atomistic model for a screw dislocation}\label{model}
The atomistic reference configuration for a straight screw dislocation is given by a two-dimensional Bravais lattice $\La = A_\La \Z^2$, $A_\La \in \R^{2 \times 2}$ with $\det(A_\La) \neq 0$. In the present work we will only consider the triangular lattice and the square lattice, respectively given by
\[
 A_\La =A_{\rm tri}:=
  \begin{pmatrix}
  1 & \frac{1}{2} \\
  0 & \frac{\sqrt{3}}{2}
 \end{pmatrix}, \qquad A_\La =A_{\rm quad}:=
  \begin{pmatrix}
  1 & 0 \\
  0 & 1
 \end{pmatrix}.
\]
The two-dimensional lattice $\La$ should be thought of as the projection of a three-dimensional lattice: In case of an infinite straight dislocation in a three-dimensional lattice, the displacements do not depend on the dislocation line direction. Therefore, it suffices to consider the projected two-dimensional lattice.

Our atomistic model, which we specify momentarily, allows for general finite
range interactions. All lattice directions included in the interaction range are
encoded in a finite neighbourhood set $\Rc \subset \La \backslash \{0\}$, which
is fixed throughout. We always assume $\spano_\Z \Rc =\La$ and will specify
further symmetry assumptions later on.

We consider an anti-plane displacement field $u\, :\,\Lambda \to \R$ and define $D_\rho u(x):=u(x+\rho)-u(x)$,  $\DRc u(x) := (D_\rho u(x))_{\rho \in \Rc}$, as well as the discrete divergence operator,
\[
 \divRc g(x) := -\sum_{\rho \in \Rc} g_{\rho}(x-\rho) - g_{\rho}(x)
 \qquad \text{for any } g\colon \Lambda \to \R^{\Rc}.
\]
In contrast to that we will always write $\nabla$ and $\divo$ if we talk about the standard (continuum) gradient and divergence of differentiable maps.

A suitable function space for (relative) displacements is
\[
\Hcc := \{ u\, :\,\Lambda \to \R\, | \, \DRc u \in \ell^2(\La)\} /\R,
\]
with norm
\[
 \|u\|_{\Hcc} := \Big(\sum_{x \in \La} \big\lvert \DRc u(x) \big\rvert^2\Big)^{1/2}.
\]
While we have factored out constants to make this a Banach space, we will often use the displacement $u$ and its equivalence class $[u]$ interchangeably when there is no risk of confusion.

For analytical purposes, we will also consider the space of compactly supported displacements
\[
\mathcal{H}^c := \{ u\, :\,\Lambda \to \R \, | \, \spt(u) \text{ is bounded}\}.
\]

Displacement fields containing dislocations do not belong to $\Hcc$ and the
energy, naively written as a sum of local contributions, will be infinite.
Following \cite{EOS2016,HO2014} we therefore consider energy differences
\begin{equation}\label{energy} \mathcal{E}(u) = \sum_{x\in\La}\Big(V(\DRc
\hat{u}(x) + \DRc u(x)) - V(\DRc \hat{u}(x))\Big), \end{equation} where
$\hat{u}$ is a chosen {\em far-field predictor} that encodes the far-field
boundary condition, while $u \in \Hcc$ is a {\em core corrector} to the given
predictor so that $\hat{u} +u$ gives the overall displacement. We will minimise
$\mathcal{E}(u)$ to equilibrate the defective crystal, but this requires some
preparation first.

We assume throughout that $V \in C^6(\R^\Rc, \R)$ is a many-body potential encoding the local interactions. Examples of typical site potentials $V$ include Lennard-Jones type pair potentials (with cut-off) and EAM potentials; see also \Cref{BCC} and \Cref{numerics}. With additional effort it would be possible to include simple quantum chemistry models within the framework \cite{ChenOrtner2016a}, but to keep the presentation as simple as possible we will not pursue this in the current work.

As $\Lambda$ is either the square or triangular lattice, which are both invariant under certain symmetries, one is tempted to directly translate these symmetries to $\Rc$ and $V$. However, as mentioned above, $\Lambda$ should be seen as a projection of a three-dimensional lattice. Such a projection can add symmetries for the lattice that are not reflected in the interaction, since they are not symmetries of the underlying three-dimensional model. We will discuss such a case in detail in \Cref{BCC}.

Because of this, we will only make the following reduced symmetry assumptions on
$\Rc$ and $V$ throughout. Let $Q_\Lambda$ be the rotation by $\pi/2$ if $\Lambda =
\Z^2$ and the rotation by $2\pi/3$ if $\Lambda = A_{\rm tri} \Z^2$. Then we assume
that
\begin{equation}\label{eq:rotsym}
  Q_\La \Rc =\Rc \qquad \text{and} \qquad V(A) = V\left((A_{Q_\La \rho})_{\rho \in \Rc}\right) \qquad \forall A \in \R^\Rc.
\end{equation}

Since we only consider a plane orthogonal to the direction of the dislocation line, it is natural that the energy does not change if the displacements shifts an atom to its equivalent position in the plane above or below. Indeed we assume that there is a minimal periodicity $p>0$ such that
\[
  V(A) = V(A + p(\delta_{\rho \sigma})_{\sigma \in \Rc})
  \qquad \text{for all } A \in \R^\Rc, \quad \rho \in \Rc.
\]
The Burgers vector of a screw dislocation is then either $b=p$ or $b=-p$.

A key conceptual assumption that we require throughout this work is
lattice stability (or, phonon stability): there exists $c_0 > 0$
such that
\begin{equation} \label{eq:latticestability}
	\< \Hhom u, u \> \geq c_0  \|u \|_{\Hcc}^2 \qquad \forall u \in \Hcc,
\end{equation}
where $\Hhom$ denote the Hessian of the potential energy evaluated at the
homogeneous lattice (note that this is different from $\delta^2 \mathcal{E}(0)$),
\begin{equation*}
	\< \Hhom u, v \> = \sum_{x \in \La}
		\sum_{\rho, \sigma \in \Rc}
			\nabla^2V(0)_{\rho\sigma} D_\rho u(x) D_\sigma v(x),
	\qquad \text{for } u, v \in \Hcc.
\end{equation*}

The choice of the predictor $\hat{u}$ in \cref{energy} for a specific problem
is part of the modelling since it determines the far-field behaviour, e.g.,
it could encode an applied strain. Intuitively one can obtain a suitable $\hat{u}$ by
solving a ``simpler'' model such as continuum linearised elasticity (CLE), which
one expects to be approximately valid in the far-field; see \cite{EOS2016} for
a formalisation of this procedure.

Assume, for the time being, that $\hat{u} : \R^2 \to \R$ is smooth away from a
defect core $\hat{x} \in \R^2 \setminus \La$. Then, by employing Taylor expansions of both $\hat{u}$
and of $V$, we can approximate the atomistic force, \begin{align} \notag
\frac{\partial \mathcal{E}}{\partial u(x)} \Big|_{u = 0}
	&= \Big(-\divRc \nabla V(\DRc\hat{u})\Big)(x)
   \\ &= \label{eq:formalforceexpansion}
	-c\divo \nabla W(\nabla\hat{u}) + O(\nabla^4\hat{u}(x)) + \text{h.o.t.s}
	\\ &= \notag
	-c\divo (\nabla^2 W(0)[\nabla\hat{u}])
		+ O(\nabla^4\hat{u}) + O(\nabla^2\hat{u}\nabla\hat{u})
			+ \text{h.o.t.s},
\end{align}
where $W\,: \R^2 \to \R$ is the
\textit{Cauchy-Born energy per unit undeformed volume}, defined by
\begin{equation}\label{CB-W}
W(F):= \frac{1}{\det A_{\La}}V(F\cdot\Rc).
\end{equation}
Moreover,  $O(\nabla^4\hat{u})$ represents the {\em anti-discretisation error}
(note that the continuum model is now the approximation),
$O(\nabla^2\hat{u}\nabla\hat{u})$ the linearisation error and ``h.o.t.s''
denotes additional terms that will be negligible in comparison.

It is therefore natural to solve a CLE model to obtain a far-field predictor
for the atomistic defect equilibration problem for an anti-plane screw
dislocation. Let $\hat{x} \in \R^2$ denote the dislocation core, then we
define the branch cut (slip plane)
\[
 \Gamma := \big\{(x_1,\hat{x}_2)\in \R^2\;|\; x_1 \geq \hat{x}_1\big\}
\]
and solve (we will see in \Cref{D2W-cor} that under our general assumptions on $\Rc$ and $V$ we have $\nabla^2W(0) \propto \Id$)
\begin{subequations} \begin{align}
 - \Delta \hat{u} &= 0 \quad \text{ in }\, \R^2 \setminus \Gamma,\label{e1a}\\
 \hat{u}(x^+) - \hat{u}(x^-) &= -b\, \quad \text{ on }\,\Gamma\setminus\hat{x},\label{e2a}\\
 \partial_{x_2}\hat{u}(x^+) - \partial_{x_2}\hat{u}(x^-) &= 0\, \quad\text{ on }\,\Gamma\setminus\hat{x}\label{e3a}.
\end{align} \end{subequations}
The system \cref{e1a}--\cref{e3a} has the well-known solution (cf. \cite{hirth-lothe})
\begin{equation}\label{screwpredictorformula}
	 \hat{u}(x)= \frac{b}{2\pi}\arg(x-\hat{x}),
\end{equation}
where we identify $\R^2 \cong \C$ and use $\Gamma - \hat{x}$ as the branch cut
for $\arg$. Note for later use, that $\nabla \hat{u} \in C^\infty(\R^2\backslash \{0\})$ and $\lvert \nabla^j \hat{u} \rvert \lesssim \lvert x \rvert^{-j}$ for all $j \geq 0$ and $x \neq 0$.

As we want to study the effects of symmetry, we will assume throughout that the dislocation core $\hat{x}$ is, respectively, at the center of a triangle or square.

Having specified the far-field predictor we can now recall properties of the
resulting variational problem.

\begin{proposition} \label{energysmooth}
	Let $\hat{u}$ be given by \cref{screwpredictorformula}, then
	$\mathcal{E}$ defined by \cref{energy} on $\mathcal{H}^c$ has a unique continuous extension $\mathcal{E} \colon \Hcc \to \R$. Furthermore, $\mathcal{E} \in C^6(\Hcc)$.
\end{proposition}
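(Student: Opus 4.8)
The plan is to realise $\mathcal{E}$ as the ``energy-difference'' functional of \cite{EOS2016} in the present scalar anti-plane setting and to build the extension by proving uniform boundedness and continuity of $\mathcal{E}$ together with its first six Fr\'echet derivatives on bounded subsets of $\mathcal{H}^c$, and then extending by density of $\mathcal{H}^c$ in $\Hcc$ (a standard truncation argument). Since $V\in C^6$, the natural candidates for the variations are, for $1\le k\le 6$,
\[
  \delta^k\mathcal{E}(u)[v_1,\dots,v_k]=\sum_{x\in\La}\nabla^kV\big(\DRc\hat u(x)+\DRc u(x)\big)\big[\DRc v_1(x),\dots,\DRc v_k(x)\big],
\]
and the value $\mathcal{E}(u)$ is recovered from $\mathcal{E}(0)=0$ by integrating $t\mapsto\delta\mathcal{E}(tu)[u]$, so it suffices to control the variations.

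For $k\ge 2$ this is routine. On a bounded ball of $\Hcc$ the arguments $\DRc\hat u+\DRc u$ lie in a fixed bounded set (using $|\DRc\hat u(x)|\lesssim|x|^{-1}$ and $\DRc u\in\ell^2$), hence $|\nabla^kV(\cdot)|\le C$; the generalised H\"older inequality with $\ell^2\hookrightarrow\ell^k$ then gives $|\delta^k\mathcal{E}(u)[v_1,\dots,v_k]|\le C\prod_i\|v_i\|_{\Hcc}$. Continuity in $u$ follows from pointwise convergence of the summand (continuity of $\nabla^kV$) and a summable majorant; for $k\le 5$ one even gets local Lipschitz continuity from the boundedness of $\nabla^{k+1}V$, while for $k=6$ one uses only uniform continuity of $\nabla^6V$ on compact sets --- this is precisely what limits the regularity to $C^6$.

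The crux is the first variation, where the pointwise bound fails. Splitting $\nabla V(\DRc\hat u+\DRc u)=\nabla V(0)+[\nabla V(\DRc\hat u+\DRc u)-\nabla V(\DRc\hat u)]+[\nabla V(\DRc\hat u)-\nabla V(0)]$, the constant term $\nabla V(0)$ contributes $\sum_x\nabla V(0)\cdot\DRc v(x)=0$ for $v\in\mathcal{H}^c$ by telescoping, and the middle bracket is $O(|\DRc u(x)|)$, hence controlled by Cauchy--Schwarz and carrying the entire $u$-dependence benignly. The obstruction is the remaining $u$-independent term $\sum_x g(x)\cdot\DRc v(x)$ with $g:=\nabla V(\DRc\hat u)-\nabla V(0)=\nabla^2V(0)[\DRc\hat u]+O(|\DRc\hat u|^2)$, since $|g(x)|\lesssim|x|^{-1}$ only, so $g\notin\ell^2(\La)$ and Cauchy--Schwarz is unavailable. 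I would sum by parts, obtaining for $v\in\mathcal{H}^c$
\[
  \sum_x g(x)\cdot\DRc v(x)=\sum_x\mathcal{F}(x)\,v(x),\qquad \mathcal{F}:=-\divRc\nabla V(\DRc\hat u),
\]
so that the task reduces to showing that the residual force $\mathcal{F}$ defines a bounded functional on $\Hcc$.

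Here the structure of the predictor is decisive, and this is where I expect the main difficulty. Making \cref{eq:formalforceexpansion} rigorous (the finitely many sites near the singular core contribute only a bounded amount and are harmless in the tail estimates), the leading continuum term is $-c\,\divo(\nabla^2W(0)[\nabla\hat u])$; since $\nabla^2W(0)\propto\Id$ by \Cref{D2W-cor} and $\hat u$ is harmonic off the branch cut, this equals $-c'\Delta\hat u=0$, so $\mathcal{F}$ is governed by the nonlinear ($O(\nabla^2\hat u\,\nabla\hat u)$) and anti-discretisation ($O(\nabla^4\hat u)$) remainders and obeys $|\mathcal{F}(x)|\lesssim|x|^{-3}$. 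In particular $\mathcal{F}\in\ell^1(\La)$, and since the left-hand side above depends only on $\DRc v$ while $v(x)-v(x_0)$ is a path-sum of $\DRc v$, absolute summability forces $\sum_x\mathcal{F}(x)=0$; hence $\sum_x\mathcal{F}(x)v(x)=\sum_x\mathcal{F}(x)(v(x)-v(x_0))$ for any fixed $x_0$. A weighted Cauchy--Schwarz step with weight $w(x)=(1+|x|)^2\log^2(2+|x|)$, combined with a discrete weighted Hardy/Poincar\'e inequality $\sum_x|v(x)-v(x_0)|^2/w(x)\lesssim\|v\|_{\Hcc}^2$, then yields $|\langle\mathcal{F},v\rangle|\le C\|v\|_{\Hcc}$, because $\sum_x|\mathcal{F}(x)|^2w(x)\lesssim\sum_x|x|^{-4}\log^2(2+|x|)<\infty$. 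The two technical ingredients to secure are thus the rigorous $|x|^{-3}$ decay of $\mathcal{F}$ --- the genuine obstacle, resting on the cancellation of the linear elastic term --- and the weighted discrete inequality; granting these, boundedness of $\delta\mathcal{E}$, finiteness of $\mathcal{E}$, continuity, and extension by density assemble in the standard way.
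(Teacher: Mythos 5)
Your overall architecture is sound and is, in essence, the proof the paper relies on: the paper itself disposes of this proposition by citing \cite[Lemma 3 and Remark 6]{EOS2016}, and your decomposition --- Taylor expansion with the $k\geq 2$ terms controlled by H\"older and $\ell^2\hookrightarrow\ell^k$, the first variation reduced by summation by parts to the residual force $\mathcal{F}=-\divRc \nabla V(\DRc \hat u)$, the decay $|\mathcal{F}(x)|\lesssim |x|^{-3}$ via the cancellation of the linear elastic term, and a weighted Cauchy--Schwarz/Hardy argument --- is exactly that route. However, there is one genuine gap, and it is precisely the step the authors themselves single out: your justification of $\sum_x \mathcal{F}(x)=0$ (``absolute summability forces'' it because the functional depends only on $\DRc v$) is invalid. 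Since $g=\nabla V(\DRc\hat u)-\nabla V(0)$ decays only like $|x|^{-1}$, $g\notin\ell^1$, the telescoping identity does not pass to the limit for free: testing $\sum_x g\cdot \DRc v = \sum_x \mathcal{F}\,v$ with a cut-off $\eta_M$ equal to $1$ on $B_M$, the left-hand side concentrates on an annulus containing $O(M^2)$ sites on which $|g|\sim M^{-1}$ and $|\DRc\eta_M|\sim M^{-1}$, which is a priori only $O(1)$, not $o(1)$. Constants are not in $\mathcal{H}^c$, so ``invariance under adding constants'' cannot be invoked directly; the limit boundary term is a genuine flux, $c_{\rm lin}\int_{\partial B_1(\hat x)}\nabla\hat u\cdot\nu\,dS$, and it vanishes only because $\nabla\hat u$ is everywhere \emph{tangential} to circles centred at $\hat x$ --- a specific geometric property of the screw predictor \cref{screwpredictorformula}, not a formal consequence of the functional depending on differences. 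This is exactly the $A_M$/$B_M$ computation the paper carries out in the proof of \Cref{moments}, where the authors explicitly record that this very claim was a gap in the proof in \cite{EOS2016} and fill it by that cut-off argument.

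Note also that this step is load-bearing, not cosmetic: if $\sum_x\mathcal{F}=m\neq 0$, the linear functional is genuinely unbounded on $\Hcc$ --- take $v_R$ equal to $1$ on $B_R$, decaying logarithmically to $0$ across $B_{R^2}\setminus B_R$, so that $\|v_R\|_{\Hcc}^2\sim 1/\log R$ while $\sum_x\mathcal{F}\,v_R\to m$ --- so no choice of weight in your Hardy step can rescue it. Everything else in your proposal holds up: the telescoping of the $\nabla V(0)$ term on $\mathcal{H}^c$, the Cauchy--Schwarz treatment of the $u$-dependent bracket, the $|x|^{-3}$ residual decay (which rests on $\nabla^2W(0)\propto\Id$ from \Cref{D2W-cor} and $\Delta\hat u=0$, and is the content of the relevant estimates in \cite{EOS2016}, refined here in \Cref{decaylinearresidual}), and the weighted inequality $\sum_x |v(x)-v(x_0)|^2 (1+|x|)^{-2}\log^{-2}(2+|x|)\lesssim \|v\|_{\Hcc}^2$ are all correct ingredients. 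Repair the zero-net-force step by inserting the cut-off computation from the proof of \Cref{moments}, and your argument is complete.
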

\begin{proof}
This is proven in \cite[Lemma 3 and Remark 6]{EOS2016}.
\end{proof}

Having established that $\mathcal{E}$ is well-defined, it is now meaningful
to discuss the equilibration problem, either energy minimisers
\begin{equation} \label{eq:minproblem}
	\bar{u} \in \arg\min_{\Hcc} \mathcal{E}
\end{equation}
or, more generally, critical points
\begin{equation*}
	\delta\mathcal{E}(\bar{u}) = 0.
\end{equation*}
Critical points of the energy satisfy the following regularity and decay estimate.

\begin{theorem} \label{EOSdecay}
If $[\bar{u}] \in \Hcc$ is a critical point of $\mathcal{E}$, then there exists $\bar{u}_{\infty} \in \R$ such that
\[
	\lvert D^j(\bar{u}(x) - \bar{u}_{\infty}) \rvert
		\lesssim   \lvert x \rvert^{-j-1} \log \lvert x \rvert,
\]
for all $\lvert x \rvert$ large enough and $0 \leq j \leq 4$.
\end{theorem}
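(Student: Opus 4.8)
The plan is to convert the Euler--Lagrange equation $\delta\mathcal{E}(\bar u)=0$ into a discrete linear elliptic equation $\Hhom\bar u = F$ whose right-hand side $F$ is both fast-decaying and of discrete-divergence form, and then to propagate that decay onto $\bar u$ through sharp estimates for the lattice Green's function of $\Hhom$. First I would write the equilibrium condition in strong form as $\divRc\nabla V(\DRc\hat u+\DRc\bar u)=0$, Taylor expand $\nabla V$ about $\DRc\hat u$, and split off the homogeneous Hessian to obtain
\[
\Hhom\bar u \;=\; f \;-\;\divRc\!\big(B[\DRc\bar u]\big)\;-\;\divRc R,
\]
where $f:=-\divRc\nabla V(\DRc\hat u)$ is the residual force of the predictor, $B(x):=\nabla^2V(\DRc\hat u(x))-\nabla^2V(0)$ is a variable coefficient with $|B(x)|\lesssim|\DRc\hat u(x)|\lesssim|x|^{-1}$, and $R$ is quadratic in $\DRc\bar u$. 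Using the expansion \cref{eq:formalforceexpansion}, the leading (linear, continuum) part of $f$ is $-c\,\divo(\nabla^2W(0)[\nabla\hat u])$, which vanishes because $\nabla^2W(0)\propto\Id$ and $\hat u$ is harmonic; hence $f$ is governed by the linearisation error $O(\nabla^2\hat u\,\nabla\hat u)\lesssim|x|^{-3}$ and the smaller anti-discretisation error $O(\nabla^4\hat u)$.

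Second I would record the decay of the Green's function $\mathcal{G}$ of $\Hhom$. Lattice stability \cref{eq:latticestability} makes $\Hhom$ invertible on $\Hcc$, and a symbol/Fourier analysis comparing $\Hhom$ with $-c\Delta$ yields $|D^j\mathcal{G}(x)|\lesssim|x|^{-j}$ for $j\ge1$, with only logarithmic growth at $j=0$; these estimates I would take as known lattice-regularity input. The representation $\bar u=\mathcal{G}*F$ then gives $D\bar u=(D\mathcal{G})*F$. A naive bound $|D\mathcal{G}|\lesssim|x|^{-1}$ against $|F|\lesssim|x|^{-3}$ only produces $|x|^{-1}$, because the region $|y|<|x|/2$ contributes $|D\mathcal{G}(x)|\sum_{|y|<|x|/2}|F(y)|$. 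The extra power is recovered from the structure of $F$: it is an exact discrete divergence, so summation by parts transfers one difference from $F$ onto $\mathcal{G}$, replacing the kernel $D\mathcal{G}\sim|x|^{-1}$ by a second difference $D^2\mathcal{G}\sim|x|^{-2}$; since the dominant flux in $F$ is of size $|x|^{-2}$ (the slowly decaying linear flux $\nabla^2V(0)[\DRc\hat u]$ produces a net contribution $\Hhom\hat u$ whose total mass $\sum_x\Hhom\hat u(x)$ vanishes, being a discrete analogue of the zero normal flux $\oint\nabla\hat u\cdot n=0$ for the harmonic predictor), the convolution of an $|x|^{-2}$ kernel with an $|x|^{-2}$ field in two dimensions yields exactly $|D\bar u(x)|\lesssim|x|^{-2}\log|x|$.

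Third I would close the argument with a bootstrap that simultaneously absorbs the nonlinearity and climbs to higher derivatives. Starting only from $\DRc\bar u\in\ell^2$, a crude pointwise rate follows from interior discrete regularity; feeding this into the representation above, the variable-coefficient term $\divRc(B[\DRc\bar u])$ and the quadratic term $\divRc R$ decay strictly faster than $f$ as soon as any modest pointwise rate is known, so the estimate improves iteratively until it saturates at the limiting rate $|x|^{-2}\log|x|$ dictated by $f$. Since $D_\sigma$ commutes with both $\divRc$ and $\Hhom$ by translation invariance, differencing the equation and repeating the convolution estimate upgrades this to $|D^j\bar u(x)|\lesssim|x|^{-j-1}\log|x|$; the restriction $j\le4$ is exactly what the $C^6$-regularity of $\mathcal{E}$ from \Cref{energysmooth} permits, as each difference must also act on the coefficient $B$ and the quadratic remainder $R$. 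Finally the case $j=0$ and the existence of $\bar u_\infty$ follow by integrating the gradient decay: $|D\bar u|\lesssim|x|^{-2}\log|x|$ is summable along rays, so $\bar u(x)$ converges to a constant $\bar u_\infty$ with $|\bar u(x)-\bar u_\infty|\lesssim|x|^{-1}\log|x|$.

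The step I expect to be the main obstacle is the gain of the additional power of $|x|$ beyond the naive $|x|^{-1}$ rate: one must show, rigorously and in the discrete setting, that enough cancellation survives the reformulation --- namely that the effective right-hand side genuinely carries a discrete-divergence structure with an $O(|x|^{-2})$ flux and that the slowly decaying linear part contributes zero net mass (the discrete divergence-theorem/harmonicity identity) --- all in the presence of the branch cut of $\hat u$ and the nonconstant coefficient $B$. Once this cancellation and the Green's-function estimates are in place, the nonlinear bootstrap is technically involved but essentially routine, and the whole scheme follows the lines of \cite{EOS2016}.
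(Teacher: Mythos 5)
Your proposal is correct and follows essentially the same route as the paper, whose ``proof'' of \Cref{EOSdecay} is simply a citation of \cite[Theorem 5 and Remark 9]{EOS2016}: your decomposition into a linear residual with discrete-divergence structure, the zero-net-force/Gauss-theorem cancellation exploiting that $\nabla\hat u$ is tangential, the lattice Green's function convolution estimates, and the nonlinear bootstrap are exactly the ingredients of that cited argument. Indeed, the same machinery reappears in the paper itself as \Cref{structuretheorem}(a) and the $\sum_x f_u = 0$ part of \Cref{moments}, so your reconstruction is consistent with both the cited proof and the paper's own later development.
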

\begin{proof}
This result is proven in \cite[Theorem 5 and Remark 9]{EOS2016}.
\end{proof}

\subsection{Anti-plane screw dislocations with mirror symmetry}\label{symsec}
The corrector decay rates in \cite{EOS2016} are in general sharp (up to
constants and log-factors), however the case of anti-plane screw dislocations
appears to be an exception: In  \cite{HO2014} it is seen numerically for a
triangular lattice that, if the core is placed at the centre of a triangle, one
approximately has $\lvert Du(x)\rvert \sim \lvert x \rvert^{-4}$
instead of the expected rate $\lvert x \rvert^{-2} \log \lvert x \rvert$. In the
present section we relate this observation to several symmetry properties of the
triangular lattice. We also discuss the square lattice case which shows a
different behaviour to emphasise the importance of the triangular lattice.

These two-dimensional models represent a screw-dislocation in a cubic or
hexagonal three-dimensional lattice only allowing for anti-plane displacements.
In \Cref{BCC}, we will additionally consider a BCC lattice and show how to
derive these two-dimensional systems from the underlying three-dimensional
model.

We recall that $\Lambda$ is either the square or triangular lattice which are
both invariant under certain rotational symmetries. Crucially, we consider
rotations about the dislocation core (not about a lattice site), which are
described by the operators
\begin{equation*}
	L_\Lambda x := Q_\Lambda (x - \hat{x}) + \hat{x},
\end{equation*}
where $Q_\Lambda$ denotes a rotation through $\pi/2$ if $\Lambda = \Z^2$ and a
rotation through $2\pi/3$ if $\Lambda = A_{\rm tri} \Z^2$. Since we assumed that
$\hat{x}$ lies, respectively, at a center of triangle or square this implies
$L_\La \La = \La$.

In the present section we additionally assume mirror symmetry with respect to
the plane orthogonal to the dislocation line, which is encoded in the site
energy through the assumption
\begin{equation} \label{eq:mirrorsymV}
  V(A) = V(-A) \quad \text{for all } A \in \R^\Rc.
\end{equation}

The mirror symmetry \cref{eq:mirrorsymV} is already implicit in our general
assumptions for the square lattice (as it can be decomposed into a point
reflection and an in-plane rotation by $\pi$). But it is an additional
assumption for the triangular lattice. Here, it is equivalent to strengthen the
rotational symmetry to rotations by $\pi/3$ instead of just $2\pi/3$.

Since $A$ represents an anti-plane {\em displacement gradient} $Du$,
  the map $A \mapsto -A$ does not represent a change in frame as it would in a full three-dimensional setting. In particular the derivation of $V$ for the BCC case in
  \Cref{BCC} shows that \cref{eq:mirrorsymV} is a non-trivial restriction on $V$.

  Indeed, if one derives $V$ from an underlying three-dimensional site potential  (see \Cref{BCC} for such a derivation in the case of a BCC lattice), then \cref{eq:mirrorsymV} means precisely that the three-dimensional lattice is mirror symmetric with respect to the plane orthogonal to the dislocation line. This is quite restrictive and effectively only true if the underlying three-dimensional lattice is given as $\Lambda'= \Lambda \times \Z \subset \R^3$ which is a hexagonal or a cubic lattice for $\Lambda = A_{\rm tri} \Z^2$ or $\Lambda = \Z^2$, respectively.

In the next section \Cref{BCC}, we will then consider a situation where \cref{eq:mirrorsymV} fails, by discussing a 111 screw dislocation in a BCC lattice.

Recall from \cref{screwpredictorformula} that the far-field predictor is
given by $\hat{u}(x) = \frac{b}{2\pi}\arg(x-\hat{x})$. Since we now assume that $\hat{x}$
is at the centre of a square or triangle, $\hat{u}$ satisfies
\begin{align} 
	\label{symofpred}
 \hat{u}(L_\La x) &=
 \begin{cases}
	 \hat{u}(x) + \frac{b}{3} \quad (\modulo b), & \text{triangular lattice}, \\
	 \hat{u}(x) + \frac{b}{4} \quad (\modulo b), & \text{square lattice}. \\
 \end{cases}
\end{align}

Motivated by this observation, we  specify an analogous symmetry
{\em assumption} on a general displacement.

\begin{definition}[Inheritance of symmetries]
We say that a displacement $u$ inherits the rotational symmetry of $\hat{u}$ if
\begin{equation} \label{eq:rotsymu}
  u(L_Q x) = u(x) \quad \text{for all } x \in \La.
\end{equation}
\end{definition}

\begin{remark} \label{rem:inheritancerotsym}
	Inheritance of rotational (or other) symmetries would typically follow from
	the corresponding symmetries of $\hat{u}, \La, V$ and uniqueness of an energy
	minimiser (up to a global translation and lattice slips). However,
	due to the severe non-convexity of the energy landscape it cannot be expected
	in general. As an example, note that the line reflection symmetry in the BCC case, discussed in \Cref{BCC}, is not necessarily inherited as is shown in \cite{Wang2014}.
\end{remark}

We can now state the main results of this section. It is particularly noteworthy that they depend on the lattice under consideration. On a square lattice the symmetry only gives one additional order of decay compared to the decay rates in \cite{EOS2016}, while on a triangular lattice we do indeed show that there are two additional orders of decay as observed numerically in \cite[Remark 3.7]{HO2014}. We will confirm this discrepancy in numerical tests in \Cref{numerics}.

\begin{theorem}[Decay with Mirror Symmetry]\label{thm:highsym}
	Let $\La \in \{\Z^2, A_{\rm tri} \Z^2 \}$ and suppose $\La, \hat{x}, \Rc, V$ satisfy all the assumptions from \Cref{model}. Furthermore, assume $V$ satisfies the mirror symmetry \cref{eq:mirrorsymV}. If $\bar{u}$ is a critical point of $\mathcal{E}$ which inherits the rotational symmetry of $\hat{u}$, then we have for $j=1,2$ and all $\lvert x \rvert$ large enough
\begin{equation} \label{eq:symdecaycorrquad}
	\lvert D^j \bar{u}(x) \rvert \lesssim \lvert x \rvert^{-2-j} \log \lvert x \rvert,
	\end{equation}
	if $\La = \Z^2$, and
	\begin{equation} \label{eq:symdecaycorrtri}
	\lvert D^j \bar{u}(x) \rvert \lesssim \lvert x \rvert^{-3-j}
	\end{equation}
	for the triangular lattice $\La =  A_{\rm tri} \Z^2$.
\end{theorem}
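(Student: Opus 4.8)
The plan is to start from the known baseline decay from Theorem~\ref{EOSdecay}, namely $|D^j\bar u(x)| \lesssim |x|^{-j-1}\log|x|$, and then \emph{bootstrap} it using the symmetry assumptions. The governing equation for the corrector is the equilibrium condition $\delta\mathcal E(\bar u)=0$, which I would linearise. Writing $F(\bar u)$ for the atomistic force, the equation reads $F(\bar u)=0$, i.e.\ $-\divRc \nabla V(\DRc\hat u + \DRc\bar u)=0$. Since $\DRc\bar u$ decays, I would expand $\nabla V$ about the predictor strain $\DRc\hat u$ and view the equation as $\Hhom_{\hat u}\,\bar u = -F(\hat u) + (\text{quadratic in }\DRc\bar u)$, where $\Hhom_{\hat u}$ is the linearised operator about the predictor and $F(\hat u)$ is the \emph{predictor residual force} $-\divRc\nabla V(\DRc\hat u)$. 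The decay of $\bar u$ is controlled by the decay of this residual force together with the decay of the nonlinear self-interaction term; invertibility of the linearised operator (which follows from lattice stability \cref{eq:latticestability} plus the decay of the coefficient perturbation) converts a force decaying like $|x|^{-q}$ into a corrector with $|D^j\bar u|\lesssim|x|^{-q+1-j}$ (up to logs), exactly the mechanism underlying Theorem~\ref{EOSdecay}.

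**The heart of the argument** is therefore to show that the residual force $F(\hat u)(x)$ decays faster than the generic $|x|^{-3}$ that the naive expansion \cref{eq:formalforceexpansion} predicts. From that expansion,
\[
F(\hat u) = -c\,\divo(\nabla^2 W(0)[\nabla\hat u]) + O(\nabla^4\hat u) + O(\nabla^2\hat u\,\nabla\hat u) + \text{h.o.t.s}.
\]
The mirror symmetry \cref{eq:mirrorsymV} makes $V$ even, so $\nabla V$ is odd and all even-order terms in the Cauchy--Born expansion — in particular the quadratic term $O(\nabla^2\hat u\,\nabla\hat u)$ — vanish. Since $\nabla^2 W(0)\propto\Id$ (by \Cref{D2W-cor}), the leading term is $-c\,\Delta\hat u$, which is \emph{zero} because $\hat u$ is harmonic away from the core by \cref{e1a}. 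Thus the first surviving term is the $O(\nabla^4\hat u)\sim|x|^{-4}$ anti-discretisation term. I expect the key point — and this is what makes the symmetry relevant — is that the relevant quartic Cauchy--Born term, evaluated on the harmonic predictor, is itself a constant multiple of a specific higher differential operator applied to $\hat u$ (for instance $\Delta^2\hat u$ or a combination of fourth derivatives dictated by the lattice symmetry group). For the square lattice the surviving quartic operator need not annihilate the harmonic $\hat u$, leaving a genuine $|x|^{-4}$ residual and hence $|D^j\bar u|\lesssim|x|^{-2-j}\log|x|$. For the triangular lattice the higher rotational symmetry (rotations by $\pi/3$, combining \cref{eq:rotsym} with mirror symmetry) forces the quartic term to be proportional to the rotationally invariant $\Delta^2\hat u$, which \emph{also} vanishes since $\hat u$ is harmonic; the next surviving term is then $O(\nabla^6\hat u)\sim|x|^{-6}$ giving $|D^j\bar u|\lesssim|x|^{-4-j}$, i.e.\ the stated $|x|^{-3-j}$ after accounting for the half-order gained by invertibility. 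The distinction between the two lattices is thus entirely a representation-theoretic statement about which isotropic differential operators are compatible with the point group, combined with harmonicity of $\hat u$.

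**The main obstacle** I anticipate is making the symmetry-annihilation rigorous at the discrete level rather than through the formal continuum expansion \cref{eq:formalforceexpansion}. One must show that the \emph{exact} discrete residual force $-\divRc\nabla V(\DRc\hat u)(x)$, not just its continuum surrogate, inherits the cancellations: the symmetry argument has to be run on the lattice sum using $Q_\La\Rc=\Rc$ and the equivariance of $V$ in \cref{eq:rotsym}, together with the exact symmetry \cref{symofpred} of $\hat u$ under $L_\La$. Concretely, I would show that applying the lattice symmetry operator maps the residual force at $x$ to (plus or minus) the residual force at $L_\La x$, which constrains its leading multipole; the even-ness of $V$ kills the odd multipoles and the rotational symmetry kills the anisotropic ones, and one then controls the Taylor remainder of $\hat u$ (whose derivatives satisfy $|\nabla^j\hat u|\lesssim|x|^{-j}$) to convert these pointwise cancellations into the sharp decay rate of the force. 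A secondary technical point is propagating the gained force decay through the nonlinear fixed-point/regularity argument so that the quadratic corrector self-interaction $O((\DRc\bar u)^2)\sim|x|^{-2(j+1)}$ does not dominate — this is fine for the square case but must be checked for the triangular case where the target rate is higher, and I would verify that the baseline decay already renders the self-interaction subcritical before bootstrapping.
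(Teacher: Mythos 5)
Your outline correctly identifies several of the cancellations (mirror symmetry killing the odd derivatives of $V$, harmonicity killing the leading Cauchy--Born term, the need to run the symmetry argument on the exact lattice sum), but it has two genuine gaps. The first is the step from residual decay to corrector decay. Your claimed conversion --- invertibility of the linearised operator turns a force decaying like $\lvert x \rvert^{-q}$ into $\lvert D^j\bar u\rvert \lesssim \lvert x\rvert^{-q+1-j}$ --- is false: writing $Du = f \ast_\La DG$ with the lattice Green's function, a residual $f$ of \emph{compact support} but nonzero mass still only yields $\lvert Du\rvert \sim \lvert x\rvert^{-1}$, with nonzero first moment only $\lvert x\rvert^{-2}$, and so on. So residual decay $\lvert x\rvert^{-4}$ (square) or even $\lvert x\rvert^{-6}$ (your triangular claim) alone cannot give the stated rates. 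One additionally needs $\sum_x f = 0$, $\sum_x f\,x = 0$, and, for the triangular rate, $\sum_x f\,x\otimes x \propto \Id$ (\Cref{structuretheorem}). This is precisely where the hypothesis that $\bar u$ inherits the rotational symmetry enters --- your argument never uses it --- and it is also why the paper works with the linear residual of the \emph{corrector}, $f_{\bar u} = -\divRc(\nabla^2V(0)[\DRc\bar u])$, relative to the constant-coefficient homogeneous operator, rather than linearising about the predictor strain: only for the homogeneous operator is the Green's-function/moment analysis available, whereas your operator $\Hhom_{\hat u}$ has variable coefficients. The paper proves $f_{\bar u}(L_Q x) = f_{\bar u}(x)$ and averages over the rotation group, using \Cref{tensors-lemma}, to kill the first moment and force isotropy of the second (\Cref{moments}); your multipole remark addresses the pointwise decay of the force, not these moments, and in particular a discrete analogue of the net-force identity $\sum_x f_{\bar u} = 0$ must be proved, not assumed.

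The second gap is in the triangular case. Mirror symmetry kills $\nabla^3V(0)$ and $\nabla^5V(0)$, but not $\nabla^4V(0)$: the cubic-in-strain term $-\frac16\divRc\big(\nabla^4V(0)[\DRc\hat u,\DRc\hat u,\DRc\hat u]\big) \sim \nabla^2\hat u\,(\nabla\hat u)^2 \sim \lvert x\rvert^{-4}$ survives and is \emph{not} annihilated by rotational invariance alone: for the triangular point group it reduces (\Cref{tensors-lemma}(c)) to the isotropic combination $c\big(\lvert\nabla\hat u\rvert^2\Delta\hat u + 2\nabla^2\hat u[\nabla\hat u,\nabla\hat u]\big)$, which is nonzero for a generic harmonic function. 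The paper's key observation --- entirely absent from your proposal --- is that this expression equals, up to multiples of $\Delta\hat u$, the mean curvature of the graph of $\hat u$, which vanishes because the helicoid is a minimal surface. Without this cancellation the triangular residual is stuck at $\lvert x\rvert^{-4}$, no better than the square case, and the theorem's rate is unreachable. Relatedly, your bookkeeping is inconsistent: there is no ``half-order gained by invertibility'', and the residual for the triangular lattice is not $O(\lvert x\rvert^{-6})$ at the first pass --- the cross terms coupling $\hat u$ and $\bar u$ (the term $I_5$ in \Cref{decaylinearresidual}) cap it at $\lvert x\rvert^{-5}\log\lvert x\rvert$, after which a genuine multi-step bootstrap (residual decay $\Rightarrow$ corrector decay via moments $\Rightarrow$ improved residual decay, with the log removed by the $\eps$-strengthened variant \Cref{structuretheorem}(d)) yields the log-free $\lvert x\rvert^{-3-j}$.
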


\begin{remark}
  The result is also expected to hold for $j\geq 3$ and $j=0$ (up to subtracting a constant) following ideas in \cite{EOS2016}. As we want to focus on other aspects and do not want to overburden the proof, this is omitted here.
\end{remark}

\begin{remark}
  In the case of a triangular lattice the existence of a critical $u$ has been
  proven in \cite{HO2014} under severe restrictions on $V$. Under further
  restrictions it is even known to be a stable global minimiser. However it is
  unclear whether it inherits symmetry.

  We emphasize, however, that the distinction between the hexagonal and BCC
  lattices, that is the loss of mirror symmetry in the BCC lattice was missed in
  \cite{HO2014}. Therefore, the results of \cite{HO2014} do not apply
  to the BCC case without further work.
\end{remark}

\begin{proof}[Idea of the proof of \Cref{thm:highsym}]
The full proof can be found in \Cref{Proofs}; here we only give a brief idea of the strategy.

Far from the defect core the equilibrium configuration is close to a homogeneous lattice, hence, the linearised problem becomes a good approximation.
  Therefore, a natural quantity to consider is the \emph{linear residual}
\begin{equation} \label{eq:linearresidual}
  f_{u} = - \divRc(\nabla^2V(0)[\DRc u]).
\end{equation}
On the one hand, one can recover $\bar{u}$ as a lattice convolution $\bar{u}=G
\ast_\Lambda f_{\bar{u}}$ where $G$ is the fundamental solution, or Green's function, of
the linear atomistic equations. On the other hand, the decay of $f_{\bar{u}}$
can be estimated by Taylor expansion with the help of the nonlinear atomistic
equations for $\hat{u} + \bar{u}$ and the continuum linear system for $\hat{u}$.
In this expansion, $\nabla^3V(0)=0$ vanishes due to anti-plane symmetry, while
the rotational symmetry leads to simple generic forms of higher order terms.

But even if $f_{\bar{u}}$ decays rapidly, this does not automatically translate to decay
for $\bar{u}=G \ast f_{\bar{u}}$. Even if $f_{\bar{u}}$ has compact support $\bar{u}$ typically only inherits the decay of $G$. However, we show that, due to rotational symmetry, the first moment of $f_{\bar{u}}$ vanishes, while the second has a very special form. Improved estimates for the decay of $f_{\bar{u}}$ together with vanishing moments then lead to an improved rate of decay of $\bar{u}$.

The difference between the triangular lattice and the quadratic lattice lies in
the form of the higher order terms in the expansion of $f_{\bar{u}}$. The terms in
question are given by the atomistic-continuum error of the linear equation and
by the nonlinearity $\nabla^4V(0)$. For the triangular lattice one finds the
leading order expression $c_1 \Delta^2 \hat{u}$ for the linear and  $c_2 (g(x)
\Delta \hat{u} + H(\hat{u}))$ for the nonlinear part, where only the constants
$c_1, c_2$ depend on the potentials. Here $H$ is the mean curvature of the graph
$(x_1, x_2, \hat{u}(x))^T$. And the mean curvature vanishes as the graph is a helicoid, a minimal surface. Since $\Delta^2 \hat{u}=0$, $\Delta \hat{u}=0$, and
$H(\hat{u})=0$ all the leading order terms vanish. On the other
hand, for the quadratic lattice, these terms are nontrivial and do not cancel.
\end{proof}


\subsection{Anti-plane screw dislocation in BCC} \label{BCC}
We turn towards the physically more important setting of a straight screw
dislocation along the 111 direction in a BCC crystal. The three-dimensional BCC lattice can be defined by $\La'' = \Z^3 + \{0,p\}$, with shift $p=  \begin{pmatrix}
\frac{1}{2} & \frac{1}{2} & \frac{1}{2} \\
\end{pmatrix}^T$.
A screw dislocation along the 111 direction is obtained by taking both
dislocation line and Burgers vector parallel to the vector $(1,1,1)^T$.
If we rotate $\La''$ by
\[Q = \frac{1}{\sqrt{6}}\begin{pmatrix}
-1 & -1 & 2\\ \sqrt{3} & -\sqrt{3} & 0\\ \sqrt{2} & \sqrt{2} & \sqrt{2}
\end{pmatrix} \]
and then rescale the lattice by $\sqrt{3/2}$, we obtain the
three-dimensional Bravais lattice
\[ \La' = \sqrt{3/2} Q \La'' = \begin{pmatrix}
1 & \frac{1}{2} & 0 \\ 0 & \frac{\sqrt{3}}{2} & 0 \\ \frac{1}{2 \sqrt{2}} & -\frac{1}{2 \sqrt{2}} & \frac{3}{2\sqrt{2}}
\end{pmatrix} \Z^3.\]
The 111 direction becomes the $e_3$ direction under this transformation, which is convenient for the subsequent discussion.

Since $p = \frac{3}{2\sqrt{2}}$, the {\em Burgers vector} is now given by $b = \pm \frac{3}{2\sqrt{2}}$ (corresponding to the actual {\em Burgers vector} in three dimensions being $(0,0,b)^T$). We project the BCC lattice $\La'$ along the dislocation direction $e_3$ to obtain the triangular lattice
\[
	\La = \big\{ (x_1,x_2) \,\big|\, x \in \La' \big\} = A_{\rm tri} \Z^2.
\]

Note though that these projections correspond to different ``heights'', i.e.,
different $z$-coordinates in $\La'$. Indeed, it is helpful to split $\La$ into the three lattices $\La = \La_1 \cup \La_2 \cup \La_3$, where
\[\La_i = v_i + \begin{pmatrix} \frac{3}{2} & \frac{3}{2} \\ \frac{\sqrt{3}}{2} &-\frac{\sqrt{3}}{2}
\end{pmatrix} \Z^2,\]
with $v_1 =0$, $v_2 = e_1$, $v_3 = \begin{pmatrix}
\frac{1}{2} & \frac{\sqrt{3}}{2}
\end{pmatrix}^T$. In this notation, one can recover the three-dimensional lattice as
\[\La' = \bigcup_i \Big(\La_i \times \Big\{ \Big(k + \frac{i}{3}\Big) \frac{3}{2\sqrt{2}} \colon k \in \Z \Big\} \Big).  \]

\begin{figure}[!htbp]
\centering
\begin{minipage}{0.32\textwidth}
\centering
\includegraphics[width=0.8\linewidth]{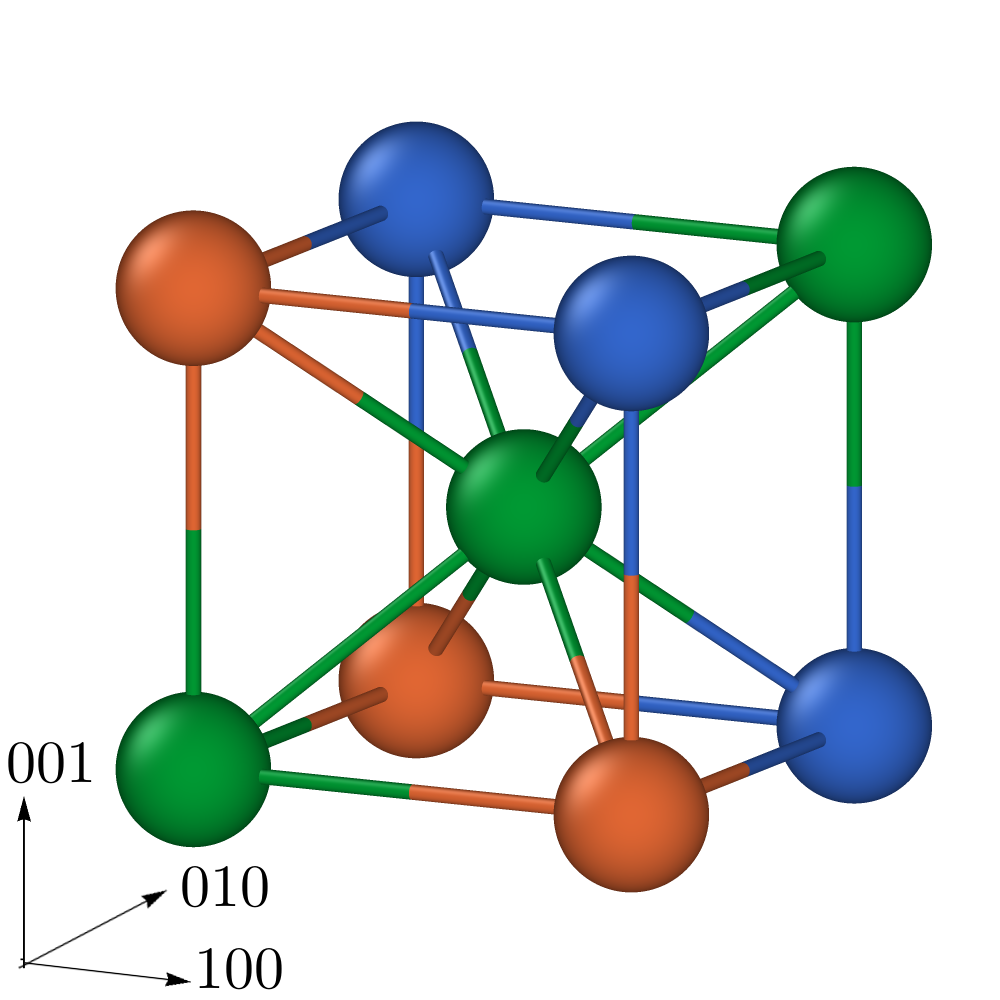}
\label{fig:bbc_unit}
\end{minipage}
\begin{minipage}{0.32\textwidth}
 \centering
\includegraphics[width=0.8\linewidth]{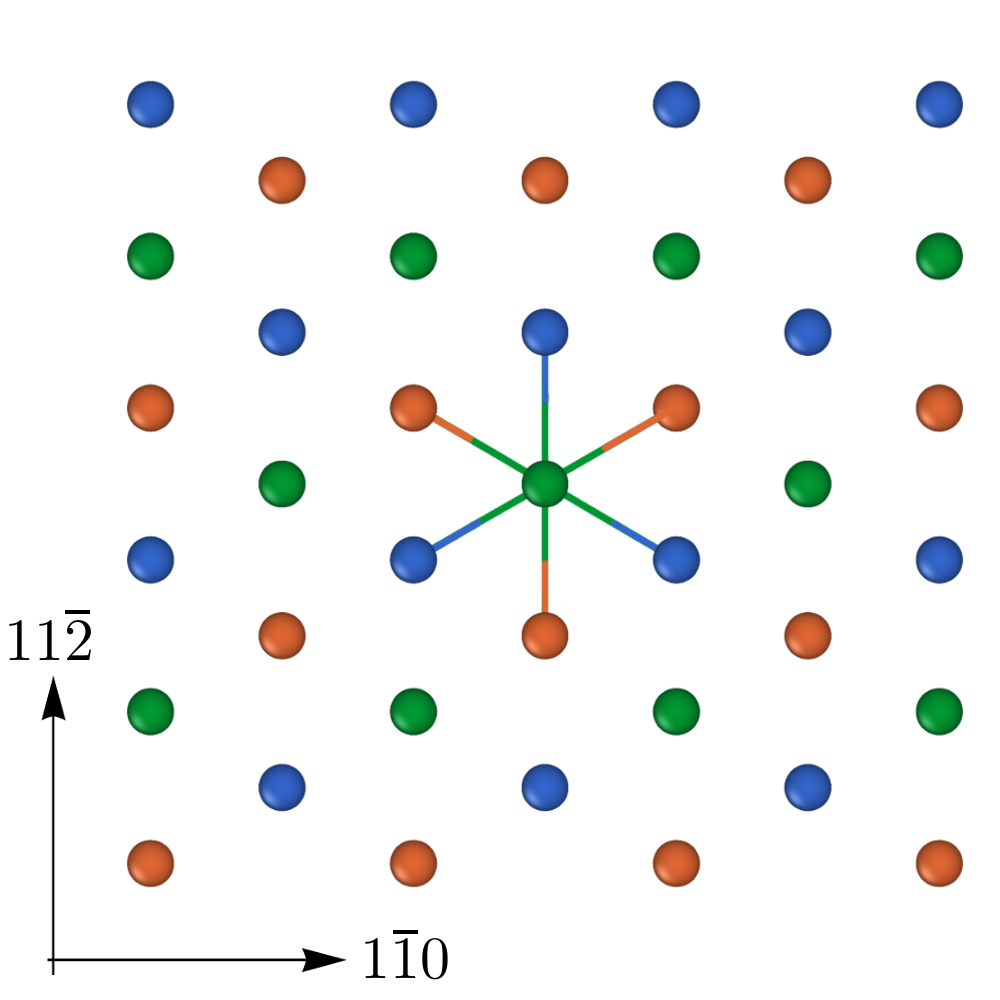}
\label{fig:bcc_111}
\end{minipage}
\begin{minipage}{0.32\textwidth}
\includegraphics[width=0.86\linewidth]{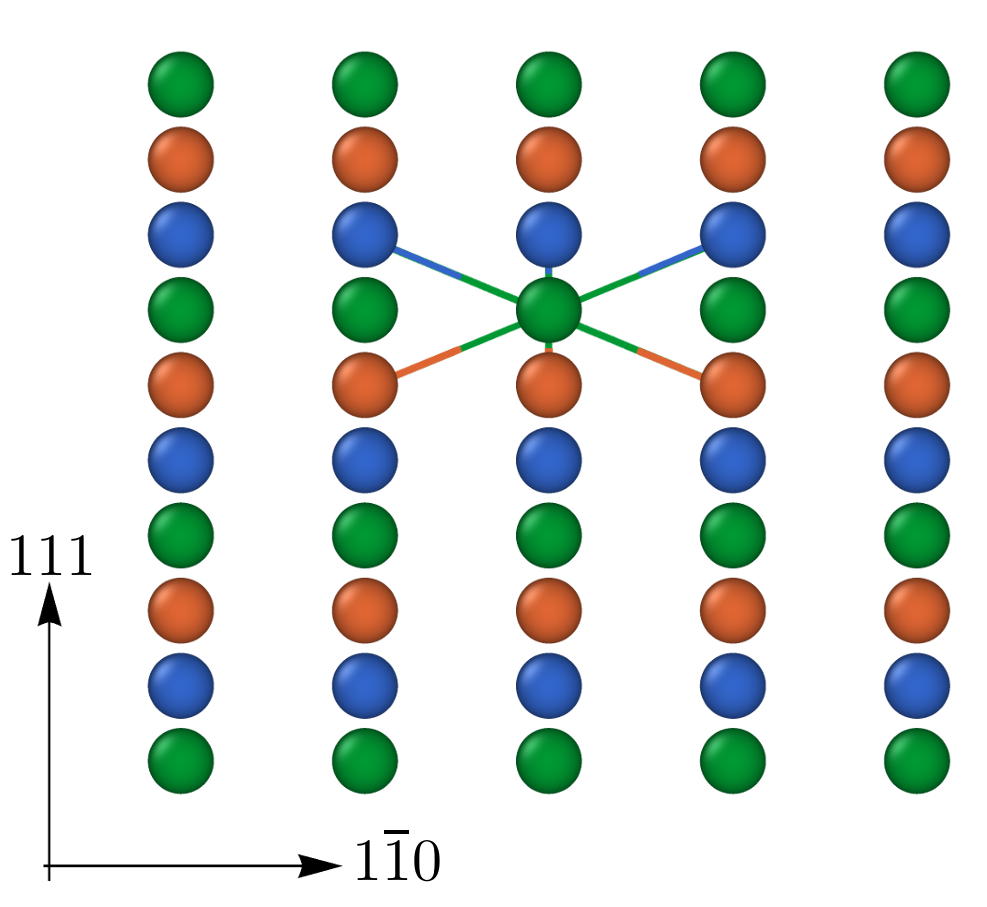}
\label{fig:bcc_11-2}
\end{minipage}
\caption[caption]{Consider the middle green atom in the BCC unit cube (left picture). After projecting along the 111-direction (the green diagonal), the three green atoms are represented as one, which has six other-coloured atoms in the unit cube as its nearest-neighbours (middle picture). The different heights of atomic planes associated with each colour are best seen by projecting the same lattice along the $11\overline{2}$-direction (right picture).}
\end{figure}

Next, we formally derive an anti-plane interatomic potential as a projection
from a three-dimensional model. The derivation is only formal as many of the sums appearing are infinite if summed over the entire lattice. Indeed, for a deformation $y$ consider formally
\[\mathcal{E}^{\rm 3d}(y) = \sum_{x \in \La'} V'(D' y(x)),\]
where $D'y(x) = (D_\rho y(x))_{\rho \in \Lambda'}$ and $V' \colon \R^{3 \times
\Lambda'} \to \R$. Note that, to achieve the periodicity of $V$ (slip
invariance) $V'$ must depend on the entire crystal. However, it is
convenient to assume that it has a finite cut-off $d>0$ such that $V'(A) =
V'(B)$ whenever $A,B$ satisfy $A_\rho=B_\rho$ for all $\rho$ with $\lvert A_\rho \rvert<d$ or $\lvert B_\rho \rvert < d$.

In contrast to $\mathcal{E}$, $\mathcal{E}^{\rm 3d}$ acts on deformations instead of displacements. To derive an energy on anti-plane displacements,
we consider deformations of the form
\[
	y^u : \La' \to \R^3, \qquad
	y^u(x) := \big(x_1, x_2, x_3 + u(x_1, x_2)\big)
\]
for anti-plane displacements $u : \La \to \R$. As differences of $y^u$ do not depend on $x_3$, the same is true for the local energy contributions. Therefore, we can formally renormalise the (possibly infinite) energy to
\[\mathcal{E}^{\rm 3d}_{\rm norm}(u) = \sum_{x \in \La'\cap (\R^2\times [0,p))}  V'(D' y^u(x)),\]
the energy per periodic layer of thickness $p$. Since $\lvert D_\rho y^u(x)\rvert \geq \lvert (\rho_1, \rho_2) \rvert$, the local energy at any $x$ can only depend on the projected directions $\mathcal{R} :=\La \cap B_d(0) \backslash\{0\}$. We can therefore define
\[
	V(Du(x_1, x_2)) := V'(D' y^u(x)),
\]
for $x \in \La'$, to obtain $\mathcal{E}(u) = \mathcal{E}^{\rm 3d}_{\rm norm}(y^u)$.

Of course we assume that $V'$ is frame-indifferent, $V'(QA) = V'(A)$ for all $A$
and $Q \in \Oo(3)$. Furthermore, we assume that $V'$ is invariant under relabelling of atoms (permutation invariance). In particular, this means that $V'$ is compatible with the lattice symmetries of $\La'$: $V'(A)=V'((A_{-\rho})_{\rho \in \Lambda'})$ and $V'(A)=V'((A_{Q'\rho})_{\rho \in \Lambda'})$, where $Q'$ is the rotation through $2 \pi/3$ with axis $e_3$. $\La'$ is also invariant under line reflection symmetry with respect to the line spanned by $a'= (\frac{\sqrt{3}}{2}, \frac{1}{2}, 0)^T$. Denoting the reflection map by $S'$ we thus have $V'(A)=V'((A_{S'\rho})_{\rho \in \Lambda'})$.

We can now translate these properties to symmetries of $V$. Clearly, $\mathcal{R}=-\mathcal{R}$ and $Q_\La \Rc =\Rc$. The symmetry properties of $V'$ directly imply $V(A) = V\left((A_{Q_\La \rho})_{\rho \in \Rc}\right)$ and $V(A) =
V\left((-A_{-\rho})_{\rho \in \Rc}\right)$ for all $A \in \R^\Rc$. The slip
invariance $V(A) = V(A + p(\delta_{\rho \sigma})_{\sigma \in \Rc})$ also follows from permutation invariance of $V'$. We have thus obtained all the general
assumptions that we imposed on $V$ in \Cref{model}.

Additionally, we will exploit the line reflection symmetry. Let $a= (\frac{\sqrt{3}}{2}, \frac{1}{2} )^T$. A reflection at the line spanned by $a$ in $\R^2$ is given by
\[S = a \otimes a - a^\bot \otimes a^\bot=\begin{pmatrix}
\frac{1}{2} &\frac{\sqrt{3}}{2} \\ \frac{\sqrt{3}}{2} & - \frac{1}{2}
\end{pmatrix}. \]
Due to the line reflection symmetry described by $S'$ as well as frame-indifference with $Q=S'$, we deduce
\begin{equation} \label{eq:linereflectionsym}
S \Rc =\Rc, \quad \text{and} \quad  V(A) = V\left((-A_{S \rho})_{\rho \in \Rc}\right).
\end{equation}

We emphasize that $\La'$ is {\em not invariant} under a rotation by only $\pi/3$ around the axis $e_3$. This is easily seen, as this rotation maps $\Lambda_2$ to $\Lambda_3$ and vice versa. Equivalently, it is not invariant under the mirror symmetry $x \mapsto (x_1, x_2, - x_3)^T$ expressed by \cref{eq:mirrorsymV}. Therefore, the more specific results from the previous section, \Cref{symsec}, do not apply.

While in the setting of \Cref{symsec} screw dislocations with Burgers vector $b=p$ and $b=-p$ are equivalent, the loss of mirror symmetry in the BCC crystal also creates two distinctively different screw dislocations, the so-called easy and hard core. In particular, they have a different core structure; see e.g. \cite{bcc-easy-hard}.

The improved decay rates we obtained in \Cref{symsec} no longer hold up either.
Indeed, one can see in numerical calculations, see \Cref{numerics}, that the
$\lvert x \rvert^{-2}$ bound on the decay of the strains is sharp (up to
logarithmic terms and constants).

Our aim now, as announced in the Introduction, is to develop a new far-field
predictor so that the corresponding corrector recovers the higher $\lvert x
\rvert^{-4}$ accuracy of the more symmetric case. A natural first idea is to
replace CLE with the Cauchy--Born nonlinear elasticity equation, however,
these are not easy to solve analytically. Instead, we expand the solution
$u = \hat{u} + u_1 + u_2 + \dots$ hoping for $\nabla^j u_2 \ll \nabla^j u_1 \ll \nabla^j \hat{u}$, which yields
\begin{align*}
  \divo \nabla W(\nabla u) \sim\,&
  \divo \nabla^2 W(0) \nabla \hat{u} \\
  & \hspace{-2cm}+ \divo \Big( \nabla^2 W(0) \nabla u_1 + \frac{1}{2}\ \nabla^3 W(0)[\nabla \hat{u}, \nabla \hat{u}] \Big) \\
  & \hspace{-2cm} + \divo \Big( \nabla^2 W(0) \nabla u_2 + \nabla^3 W(0)[\nabla \hat{u}, \nabla u_{1}] + \nabla^4 W(0) [\nabla \hat{u}, \nabla \hat{u}, \nabla \hat{u}] \Big)
    + \dots .
\end{align*}

The atomistic-continuum error is typically expected to be of comparable size as the last terms. But, as the projected lattice is still a triangular lattice, many of the arguments discussed in \Cref{symsec} still apply and the highest order of this error as well as the term $\nabla^4 W(0) [\nabla \hat{u}, \nabla \hat{u}, \nabla \hat{u}]$ vanish. However, we now have $\nabla^3 W(0) \neq 0$ making the remaining terms non-trivial. We can thus obtain the first two corrections to $\hat{u}$ by solving the linear PDEs
\begin{subequations} \begin{align}
-\divo \nabla^2 W(0) \nabla u_1 &= \frac{1}{2} \divo \Big( \nabla^3 W(0)[\nabla \hat{u}, \nabla \hat{u}]\Big),\label{eqcorr1a}\\
-\divo \nabla^2 W(0) \nabla u_2 &= \divo \Big( \nabla^3 W(0)[\nabla \hat{u}, \nabla u_{1}]\Big)\label{eqcorr2a}
\end{align} \end{subequations}
on $\R^2 \backslash \{0\}$.

Due to \Cref{D2W-cor,D3W-cor}, exploiting the rotational crystalline symmetry,
we can simplify them as
\begin{subequations} \begin{align}
- c_{\rm lin} \Delta u_1 &= c_{\rm quad} \begin{pmatrix}
\partial_{11} \hat{u}- \partial_{22} \hat{u} \\ -2 \partial_{12} \hat{u}
\end{pmatrix} \cdot \nabla \hat{u},\label{eqcorr1b}\\
- c_{\rm lin} \Delta u_2 &= c_{\rm quad} \bigg(\begin{pmatrix}
\partial_{11} u_1 - \partial_{22} u_1 \\ -2 \partial_{12} u_1
\end{pmatrix} \cdot \nabla \hat{u}  +  \begin{pmatrix}
\partial_{11} \hat{u} - \partial_{22} \hat{u} \\ -2 \partial_{12} \hat{u}
\end{pmatrix}\cdot\nabla u_1 \bigg). \label{eqcorr2b}
\end{align} \end{subequations}
where
\begin{align*}
c_{\rm lin} &=\frac{1}{2} \tr \nabla^2W(0), \qquad \text{and} \\
c_{\rm quad} &=\frac{1}{4} (\nabla^3W(0)_{111} - 3\nabla^3W(0)_{122}). \\
\end{align*}

In polar coordinates, $x = \hat{x} + r (\cos\varphi, \sin\varphi)$, using the
fact that $\hat{u}=\frac{b}{2 \pi} \arg(x-\hat{x}) = \frac{b}{2 \pi} \varphi$,
\cref{eqcorr1b} becomes
\[
  - \Delta u_1 = \frac{c_{\rm quad} b^2}{c_{\rm lin} 2 \pi^2} \frac{\cos(3\varphi)}{r^3},
\]
from which we readily infer that one possible solution is
\begin{equation}\label{u1form}
  u_1(x+\hat{x}) = \frac{c_{\rm quad} b^2}{c_{\rm lin} 16 \pi^2}\frac{\cos(3\varphi)}{r} = \frac{c_{\rm quad} b^2}{c_{\rm lin} 16 \pi^2} \frac{x_1^3 - 3 x_1 x_2^2}{\lvert x \rvert^4}.
\end{equation}


Similarly, inserting $\hat{u}$ and $u_1$ into \cref{eqcorr2b} yields
\[
  - \Delta u_2 = \frac{c_{\rm quad}^2 b^3}{c_{\rm lin}^2 4 \pi^3}
  \frac{\sin(6 \varphi)}{r^4},
\]
for which a solution is given by
\begin{equation}
u_2(x+ \hat{x}) =  \frac{c_{\rm quad}^2 b^3}{c_{\rm lin}^2 128 \pi^3} \frac{\sin(6\varphi)}{r^2} =  \frac{c_{\rm quad}^2 b^3}{c_{\rm lin}^2 128 \pi^3} \frac{6x_1^5 x_2 - 20 x_1^3 x_2^3 +6 x_1 x_2^5}{\lvert x\rvert^8}\label{u2form}.
\end{equation}

While there are many more solutions for both problems, we will choose these specific ones as they satisfy the decay estimates
\begin{equation} \label{eq:u1u2estimates}
\lvert \nabla^j u_i \rvert \lesssim \lvert x \rvert^{-i-j}
\end{equation}
and the rotational symmetry $u_i(L_Q x) = u_i(x)$. With the solutions $u_1$ and $u_2$ obtained, respectively, in
\cref{u1form,u2form} we obtain the following result.

\begin{theorem}[BCC]\label{BCC-result}
Let $\La = A_{\rm tri} \Z^2$ and suppose $\La, \hat{x}, \Rc, V$ satisfy all the assumptions from \Cref{model}. Furthermore, assume $\Rc$ and $V$ satisfy the line reflection symmetry \cref{eq:linereflectionsym}. Consider a critical point $\bar{u}$ of \cref{energy} that inherits the rotational symmetry of $\hat{u}$. Then we can write $\bar{u} = u_1 + u_2 + \bar{u}_{\rm rem}$ where $u_1$ and $u_2$ are given by \cref{u1form} and \cref{u2form} and the remainder $\bar{u}_{\rm rem}$ satisfies the decay estimates
\begin{equation}\label{BCC-est}
    \lvert D^j \bar{u}_{\rm rem}(x) \rvert \lesssim \lvert x \rvert^{-j-3}\log \lvert x \rvert,
\end{equation}
for $j=1,2$ and all $\lvert x \rvert$ large enough.
\end{theorem}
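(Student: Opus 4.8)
The plan is to follow the strategy of the proof of \Cref{thm:highsym}, recasting the problem through the linear residual \eqref{eq:linearresidual} and the lattice Green's function. Write $\bar u_{\rm rem} = \bar u - u_1 - u_2$ with $u_1,u_2$ restricted to $\La$; by \eqref{eq:u1u2estimates} these correctors lie in $\Hcc$, so $[\bar u_{\rm rem}]\in\Hcc$, and by lattice stability \eqref{eq:latticestability} the homogeneous operator $\Hhom$ is invertible with Green's function $G$ obeying $|D^jG(x)|\lesssim|x|^{-j}$ up to logarithmic factors. Setting $f_{\rm rem} := \Hhom\bar u_{\rm rem} = -\divRc(\nabla^2V(0)[\DRc\bar u_{\rm rem}])$ gives the representation $\bar u_{\rm rem} = G\ast_\La f_{\rm rem}$; it then remains to estimate the decay and the moments of $f_{\rm rem}$.

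To bound $f_{\rm rem}$ I would Taylor-expand the equilibrium equation $-\divRc\nabla V(\DRc\hat u + \DRc\bar u) = 0$ about the homogeneous state, using $\nabla V(0)=0$, so that $f_{\bar u}$ becomes the sum of the linearised predictor force and the nonlinear contributions carrying $\nabla^3V(0),\nabla^4V(0),\dots$ evaluated on $\DRc(\hat u+\bar u)$. Matching these against the Cauchy--Born expansion \eqref{eq:formalforceexpansion} together with the symmetry reductions \Cref{D2W-cor,D3W-cor}, and subtracting $\Hhom(u_1+u_2)$, the terms of order $|x|^{-3}$ and $|x|^{-4}$ cancel by a combination of: the harmonicity $\Delta\hat u = 0$ and the triangular-lattice cancellation of the leading anti-discretisation error (proportional to $\Delta^2\hat u = 0$); the vanishing mean curvature of the helicoidal graph of $\hat u$, which annihilates the $\nabla^4W(0)[\nabla\hat u,\nabla\hat u,\nabla\hat u]$ term; and the defining identities \eqref{eqcorr1b}--\eqref{eqcorr2b} for $u_1,u_2$. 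The anti-discretisation errors of $u_1,u_2$ are themselves of order $|x|^{-5}$ because $|\nabla^4 u_i|\lesssim|x|^{-4-i}$. This leaves a residual of order $|x|^{-5}\log|x|$, \emph{except} for the terms depending on $\bar u_{\rm rem}$ itself, chiefly $\nabla^3V(0)[\DRc\hat u,\DRc\bar u_{\rm rem}]$, whose coefficient decays like $|x|^{-1}$; seeding with the preliminary bound $|D\bar u_{\rm rem}|\lesssim|x|^{-2}\log|x|$ of \Cref{EOSdecay} this term is $O(|x|^{-4}\log|x|)$ and must be absorbed by a fixed-point/bootstrap argument, after which it becomes subdominant.

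For the moment analysis, since $\bar u$, $u_1$, $u_2$ all inherit the rotational symmetry \eqref{eq:rotsymu} and the residual map commutes with $L_\La$, the field $f_{\rm rem}$ is invariant under the order-three rotation $L_\La$. Summation by parts gives the vanishing zeroth moment $\sum_x f_{\rm rem}(x)=0$; three-fold symmetry then forces the first moment to vanish and constrains the second moment to a scalar multiple of the identity. The trace part contributes $\Delta(D^jG)$ to the far field, which is of higher order since $\Delta G$ vanishes away from the core to leading order, so the first surviving multipole is the third-moment term $\sim D^{j+3}G\sim|x|^{-j-3}$. Combined with the intrinsic rate $|x|^{-(5-2)-j}=|x|^{-j-3}$ inherited from $|f_{\rm rem}|\lesssim|x|^{-5}\log|x|$, the Green's-function convolution estimate of \cite{EOS2016} yields $|D^j\bar u_{\rm rem}(x)|\lesssim|x|^{-j-3}\log|x|$ for $j=1,2$, which closes the bootstrap.

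The main obstacle is the bookkeeping of the residual expansion in the second step: one must carry the discrete nonlinear force to third order, cleanly separate the genuine nonlinear contributions from the anti-discretisation (consistency) errors, and verify that every term through order $|x|^{-4}$ is cancelled after subtracting the discrete residuals of $u_1$ and $u_2$. This hinges delicately on the reductions of $\nabla^2W(0),\nabla^3W(0),\nabla^4W(0)$ from \Cref{D2W-cor,D3W-cor}, on the minimal-surface identity for $\hat u$, and on the exact matching of the cross term $\nabla^3W(0)[\nabla\hat u,\nabla u_1]$ with the right-hand side of \eqref{eqcorr2b}. Equally delicate is controlling the variable-coefficient perturbation $\nabla^3V(0)[\DRc\hat u,\DRc\bar u_{\rm rem}]$ through the bootstrap, and tracking the discrete-versus-continuum mismatch that is responsible for the logarithmic factor in \eqref{BCC-est}.
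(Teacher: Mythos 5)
Your proposal is correct and takes essentially the same route as the paper: it decomposes $\bar{u} = u_1 + u_2 + \bar{u}_{\rm rem}$, expands the equilibrium equation so that the defining equations \cref{eqcorr1a}--\cref{eqcorr2a} for $u_1, u_2$, the identity $\Delta^2\hat{u}=0$, and the minimal-surface (helicoid) property cancel all residual contributions through order $\lvert x \rvert^{-4}$, derives the vanishing zeroth and first moments and the isotropic second moment of $f_{\bar{u}_{\rm rem}}$ from the three-fold rotational symmetry, and closes with a Green's-function/multipole bootstrap --- precisely the paper's combination of \Cref{decaylinearresidual}(c), \Cref{moments}, and \Cref{structuretheorem}. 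The only inaccuracies are ones of attribution and bookkeeping: the moment-weighted decay estimates you invoke (parts (b)--(d) of \Cref{structuretheorem}) are established in \cite{MainPaper} rather than \cite{EOS2016}, which only covers the base case (a), and the paper's iteration in fact first sharpens the corrector bound to $\lvert D^j \bar{u}_{\rm rem}\rvert \lesssim \lvert x \rvert^{-2-j}$ without the logarithm, so that the final residual bound is $\lvert x \rvert^{-5}$ as \Cref{structuretheorem}(c) requires, rather than the $\lvert x \rvert^{-5}\log\lvert x\rvert$ you state.
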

\begin{remark}
As discussed in the introduction, our new predictor $\hat{u} + u_1 + u_2$ does not just result in $O(\lvert x \rvert^{-3})$ accuracy for the strain which one might expect from the general expansion idea or from well-established results about the Cauchy-Born anti-discretisation error. The actual accuracy is one order higher, i.e., $O(\lvert x \rvert^{-4})$.
\end{remark}
\begin{remark}
  Since $|D^j u_1(x)| \lesssim |x|^{-j-1}$, {\em without} log-factors,
  \Cref{BCC-result} improves the result of \Cref{EOSdecay} to
  \[
      \big|D^j \bar{u}(x)\big|
        \lesssim |x|^{-j-1}, \qquad j = 1, 2.
  \]
 \end{remark}

\section{Numerical approximation} \label{numerics}
\subsection{Supercell approximation}
\label{sec:approx}
A central motivation for the present work are the poor convergence rates of
standard supercell approximations for the defect equilibration problem
\cref{eq:minproblem} established in \cite{EOS2016}. We can now exploit the
theoretical results from \Cref{sec:mainresults} to construct boundary conditions
that give rise to new supercell approximations. These have improved rates of
convergence without any corresponding increase in computational complexity.

\def\ui{\tilde{y}}
\def\us{\tilde{u}}
\def\L{\Lambda}

We begin by defining a generalised energy-difference functional in a
predictor-corrector form
\begin{align*}
	\mathcal{E}(u_{\rm pred}; u) :=
			\sum_{x \in \L} V\big(Du_{\rm pred}(x) + Du(x)\big)
									- V\big(Du_{\rm pred}(x)\big), &\\
   \text{for } u_{\rm pred} \in \hat{u} + \Hcc,  u \in \Hcc.&
\end{align*}
Then, the generalised variational problem
\begin{equation} \label{eq:full-problem}
	\us \in \arg\min \big\{ \mathcal{E}(u_{\rm pred}; u) \, |\, u \in \Hcc \big\}
\end{equation}
is equivalent to \cref{eq:minproblem}, via the identity
$u_{\rm pred} + \us = \hat{u} + \bar{u}$.

We now note as in \cite{EOS2016} that the supercell approximation on a
domain $B_R \cap \L \subset \Omega_R \subset \L$ with boundary condition $u_{\rm pred}$ on $\L \setminus \Omega_R$ can be written as a Galerkin approximation
\begin{align}
	\label{eq:approx-prob}
	&\us_R \in \arg\min \big\{ \mathcal{E}(u_{\rm pred}; u)\,|\,
						u \in \mathcal{H}^0(\Omega_R) \big\},   \\
   \notag
   \text{where} \qquad
	&\mathcal{H}^0(\Omega_R)
	:= \{ v \in \mathcal{H}^c\,|\,v = 0\,\text{ in }\, \La \setminus \Omega_R\}.
\end{align}

Using generic properties of Galerkin approximations we obtain the
following approximation error estimate.

\begin{theorem}\label{approx-error-thm}
	Let $\us$ be a \textit{strongly stable solution} (cf. \cite{EOS2016}) to \cref{eq:full-problem}, i.e. satisfying
	\[
	 \delta_u^2 \mathcal{E}(u_{\rm pred};\us)[v,v] \geq C \|v\|_{\Hcc}^2,
	\]
for all $v \in \mathcal{H}^c$. If $\us$ further satisfies
	 \[
	   |D\us(x)| \lesssim |x|^{-s}\log^r|x|,
	 \]
	for some $s > 1, r \in \{0, 1\}$, then there exist $C, R_0 > 0$ such that,
	for all $R > R_0$ there exists a stable solution $\us_R$ to
	\cref{eq:approx-prob} satisfying
	 \begin{equation}
	   \|\us_R - \us\|_{\Hcc} \leq C R^{-s+1} \log^r(R).
	 \end{equation}
\end{theorem}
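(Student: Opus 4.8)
The plan is to run the classical ``consistency plus stability implies convergence'' argument for Galerkin approximations, as in \cite{EOS2016}. Regard the approximate Euler--Lagrange map $\mathcal{F}_R\colon \mathcal{H}^0(\Omega_R) \to \mathcal{H}^0(\Omega_R)^*$ defined by $\mathcal{F}_R(u)[v] := \delta_u\mathcal{E}(u_{\rm pred};u)[v]$, so that a solution $\tilde{u}_R$ of \cref{eq:approx-prob} is exactly a zero of $\mathcal{F}_R$. Since $\mathcal{H}^0(\Omega_R) \subset \mathcal{H}^c$, the exact solution $\tilde{u}$ of \cref{eq:full-problem} is a critical point that may be tested against every $v \in \mathcal{H}^0(\Omega_R)$, i.e. $\delta_u\mathcal{E}(u_{\rm pred};\tilde{u})[v] = 0$ there. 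The strategy is to produce a good trial function $\Pi_R\tilde{u} \in \mathcal{H}^0(\Omega_R)$ close to $\tilde{u}$, estimate the residual $\mathcal{F}_R(\Pi_R\tilde{u})$, and then solve $\mathcal{F}_R = 0$ near $\Pi_R\tilde{u}$ via a quantitative inverse function theorem whose constants are uniform in $R$.

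First I would establish the best-approximation estimate. Fix the representative of $\tilde{u}$ tending to $0$ at infinity; this is legitimate because $|D\tilde{u}(x)| \lesssim |x|^{-s}\log^r|x|$ with $s>1$ yields $|\tilde{u}(x)| \lesssim |x|^{-s+1}\log^r|x|$. Let $\eta_R$ be a cut-off equal to $1$ on $B_{R/2}$, vanishing outside $\Omega_R$, with $|D\eta_R| \lesssim R^{-1}$, and set $\Pi_R\tilde{u} := \eta_R\tilde{u} \in \mathcal{H}^0(\Omega_R)$. A discrete product rule splits $D(\tilde{u}-\Pi_R\tilde{u})$ into a term supported in $\{|x|>R/2\}$ controlled by $D\tilde{u}$ and a term of the form $(D\eta_R)\tilde{u}$ supported in the annulus $R/2<|x|<R$. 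Summing squares and using the decay of $D\tilde{u}$ and of $\tilde{u}$, both contributions are of size $R^{-2s+2}\log^{2r}R$, whence
\[
 \|\tilde{u} - \Pi_R\tilde{u}\|_{\Hcc} \lesssim R^{-s+1}\log^r R,
\]
which is precisely the target rate.

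Next I would bound the residual and transfer stability. Since $\tilde{u}$ is an exact critical point, for $v \in \mathcal{H}^0(\Omega_R)$ with $\|v\|_{\Hcc}=1$,
\[
 \mathcal{F}_R(\Pi_R\tilde{u})[v] = \delta_u\mathcal{E}(u_{\rm pred};\Pi_R\tilde{u})[v] - \delta_u\mathcal{E}(u_{\rm pred};\tilde{u})[v],
\]
so the fundamental theorem of calculus together with the uniform boundedness of $\delta_u^2\mathcal{E}$ (the predictor--corrector analogue of the $C^6$-regularity in \Cref{energysmooth}) gives the consistency bound $\|\mathcal{F}_R(\Pi_R\tilde{u})\| \lesssim \|\Pi_R\tilde{u}-\tilde{u}\|_{\Hcc} \lesssim R^{-s+1}\log^r R$. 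For stability, the assumed coercivity $\delta_u^2\mathcal{E}(u_{\rm pred};\tilde{u})[v,v] \geq C\|v\|_{\Hcc}^2$ holds for all $v \in \mathcal{H}^c$, hence in particular on the subspace $\mathcal{H}^0(\Omega_R)$; since $\|\Pi_R\tilde{u}-\tilde{u}\|_{\Hcc}\to 0$ and $\delta_u^2\mathcal{E}$ is Lipschitz (from bounded $\delta_u^3\mathcal{E}$), coercivity transfers to $\Pi_R\tilde{u}$ with constant $C/2$ for all large $R$. With residual $\eta \lesssim R^{-s+1}\log^r R$, stability $\sigma=C/2$, and an $R$-independent Lipschitz constant $L$ on a fixed ball around $\tilde{u}$, the smallness condition $2L\sigma^{-2}\eta<1$ holds for $R>R_0$, and the quantitative inverse function theorem (cf. \cite{EOS2016}) provides a stable zero $\tilde{u}_R$ of $\mathcal{F}_R$ with $\|\tilde{u}_R-\Pi_R\tilde{u}\|_{\Hcc} \leq 2\sigma^{-1}\eta \lesssim R^{-s+1}\log^r R$. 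The triangle inequality with the best-approximation estimate then closes the proof.

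The main obstacle is not any individual estimate but the \emph{uniformity in $R$}: the inverse function theorem delivers an $R$-independent rate only if the stability constant $\sigma$ and the Lipschitz constant $L$ can be chosen independently of the growing finite-dimensional spaces $\mathcal{H}^0(\Omega_R)$. This is exactly where the \emph{strong} stability hypothesis on all of $\mathcal{H}^c$ (rather than stability of the infinite problem alone) is essential, since it is inherited verbatim by every Galerkin subspace; combined with the global $C^6$-bounds on $\mathcal{E}$ it supplies the $R$-independent constants needed to apply the fixed-point argument uniformly.
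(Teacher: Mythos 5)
Your proposal is correct and follows essentially the same route as the paper: the paper's proof simply invokes \cite[Theorem 6]{EOS2016} verbatim, and that cited argument is precisely your cut-off interpolant $\Pi_R\us = \eta_R\us$, the best-approximation bound $\|\us-\Pi_R\us\|_{\Hcc}\lesssim R^{1-s}\log^r R$ from the tail of $D\us$, and the consistency--stability--quantitative-inverse-function-theorem machinery with $R$-independent constants supplied by strong stability on $\mathcal{H}^c$. Your reconstruction in fact spells out the intermediate estimate $\|\us_R-\us\|_{\Hcc}\lesssim\|\us\|_{\Hcc(\Lambda\setminus B_{R/2})}$ that the paper quotes, so there is nothing to add.
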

\begin{proof}
	The existence of a solution $\us_R$, for $R$ sufficiently large, follows
	from \cite[Theorem 6]{EOS2016} (the case $u_{\rm pred}  = \hat{u}$) and the
	equivalence of \cref{eq:full-problem} with \cref{eq:minproblem}.
	Moreover, following the proof of \cite[Theorem 6]{EOS2016} verbatim we obtain
	\begin{align*}
		\|\us_R - \us\|_{\Hcc} \lesssim \|\us\|_{\Hcc(\Lambda\setminus B_{R/2})}.
	\end{align*}
	We then apply the assumption that $|D\us(x)| \lesssim |x|^{-s}\log^r|x|$ to
	arrive at the desired error estimate,
	\begin{equation*}
		\|\us_R - \us\|_{\Hcc}
		\lesssim \bigg(\sum_{x\in\La\setminus B_{R/2}} |D\us(x)|^2\bigg)^{1/2}
		\lesssim R^{1-s} \log^r(R).
	\end{equation*}
\end{proof}

\subsection{Numerical examples with mirror symmetry}\label{numerics-simple}
To test the results from \Cref{symsec} we consider a toy model involving nearest-neighbour pair interaction,
\[
 	V(Du(x)) = \sum_{\rho\in\Rc}\psi(D_{\rho}u(x)),
	\qquad \psi(r) = \sin^2(\pi r),
\]
which is $1$-periodic, i.e., $p = 1$. We investigate the three cases
\begin{enumerate}[label=(\roman*)]
 \item symmetric square: \\
 		$\La = \Z^2,\quad\Rc = \{\pm e_1, \pm e_2\},\quad \hat{x} = \left(\frac{1}{2},\frac{1}{2}\right)$;
 \item symmetric triangular: \\
 		$\La = A_{\rm tri}\Z^2,\quad\Rc = \left\{\pm   \begin{pmatrix}
  1 \\
  0
 \end{pmatrix}, \pm   \begin{pmatrix}
  \frac{1}{2} \\
  \frac{\sqrt{3}}{2}
 \end{pmatrix},\pm   \begin{pmatrix}
  -\frac{1}{2} \\
  \frac{\sqrt{3}}{2}
 \end{pmatrix}\right\},\quad \hat{x} = \left(\frac{1}{2},\frac{\sqrt{3}}{6}\right)$;
 \item asymmetric triangular: as in (ii), but with $\hat{x} = \left(\frac{1}{4}, \frac{1}{8}\right)$.
\end{enumerate}
The cases (i) and (ii) satisfy all conditions of \Cref{thm:highsym}
while (iii) fails the crucial symmetry assumptions. In particular, at least up to logarithmic terms,  our theory predicts $\lvert D \bar{u}(x) \rvert \lesssim \lvert x \rvert^{-3}$ for (i), $\lvert D \bar{u}(x) \rvert \lesssim \lvert x \rvert^{-4}$ for (ii), and $\lvert D \bar{u}(x) \rvert \lesssim \lvert x \rvert^{-2}$ for (iii). Due to \Cref{approx-error-thm} this corresponds to $\|\us_R - \us\|_{\Hcc}$ being $O(R^{-2})$, $O(R^{-3})$, and  $O(R^{-1})$, respectively.
To compute equilibria we employ a standard Newton scheme, terminated at an
$\ell^{\infty}$-residual of $10^{-8}$. In \Cref{fig:symm_decay} we plot both
the decay of the correctors, confirming the predictions of
\Cref{thm:highsym}, and the approximation error in the supercell approximation against the domain size $R$, confirming the prediction of \Cref{approx-error-thm}.
\begin{remark}
 An asymmetric square case (that is as in (i) but with $\hat{x} = \left(\frac{1}{3},\frac{1}{3}\right)$ has also been considered and the results are as expected by our theory and thus are qualitatively equivalent to (iii). Therefore we do not include them in the figures to retain clarity. It does however further emphasise the role of symmetry in the problem.
\end{remark}

\begin{figure}[!htbp]
\centering
\begin{minipage}{0.49\textwidth}
\centering
\includegraphics[width=0.96\linewidth]{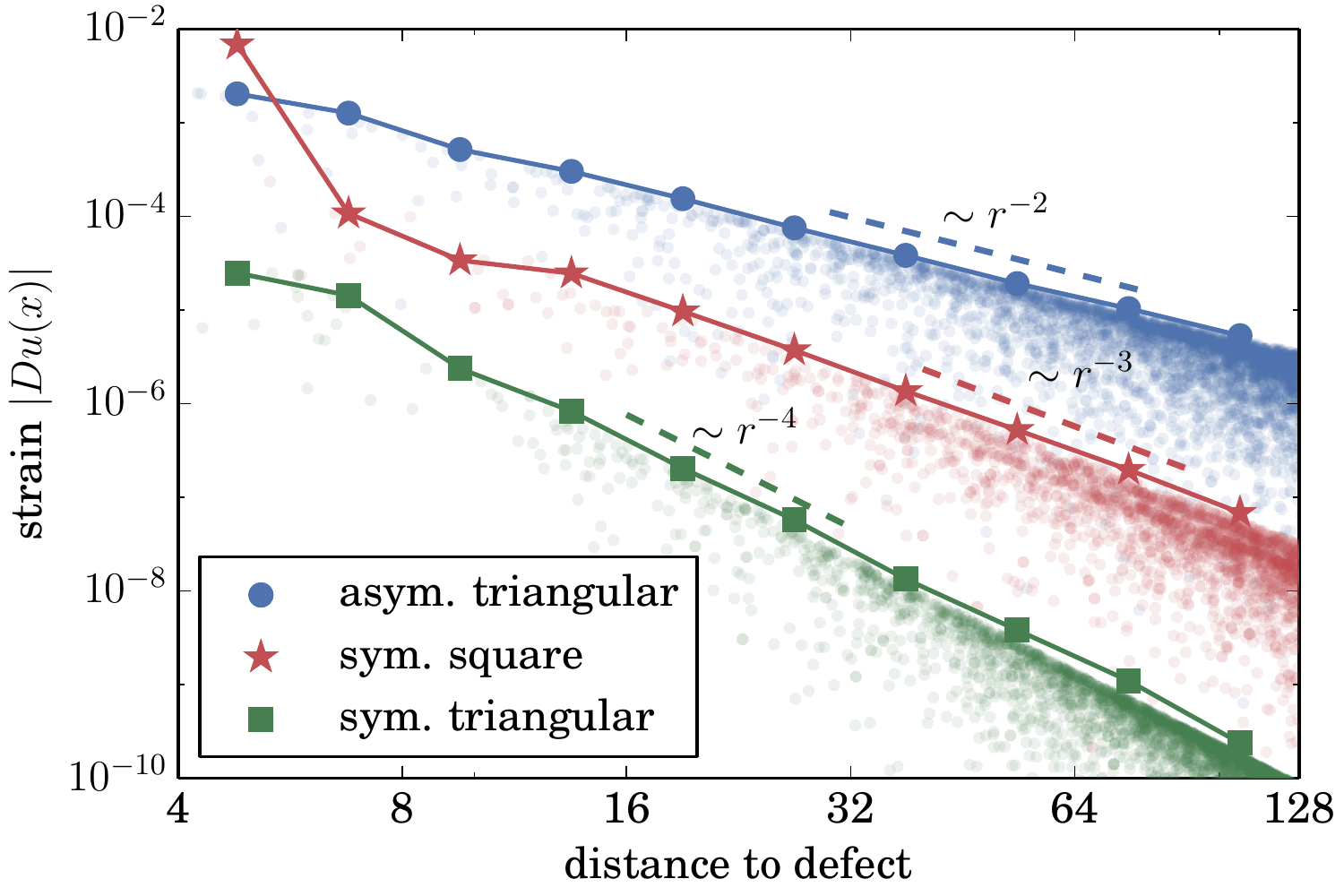}
\label{fig:symm_decay}
\end{minipage}
\begin{minipage}{0.49\textwidth}
 \centering
\includegraphics[width=0.96\linewidth]{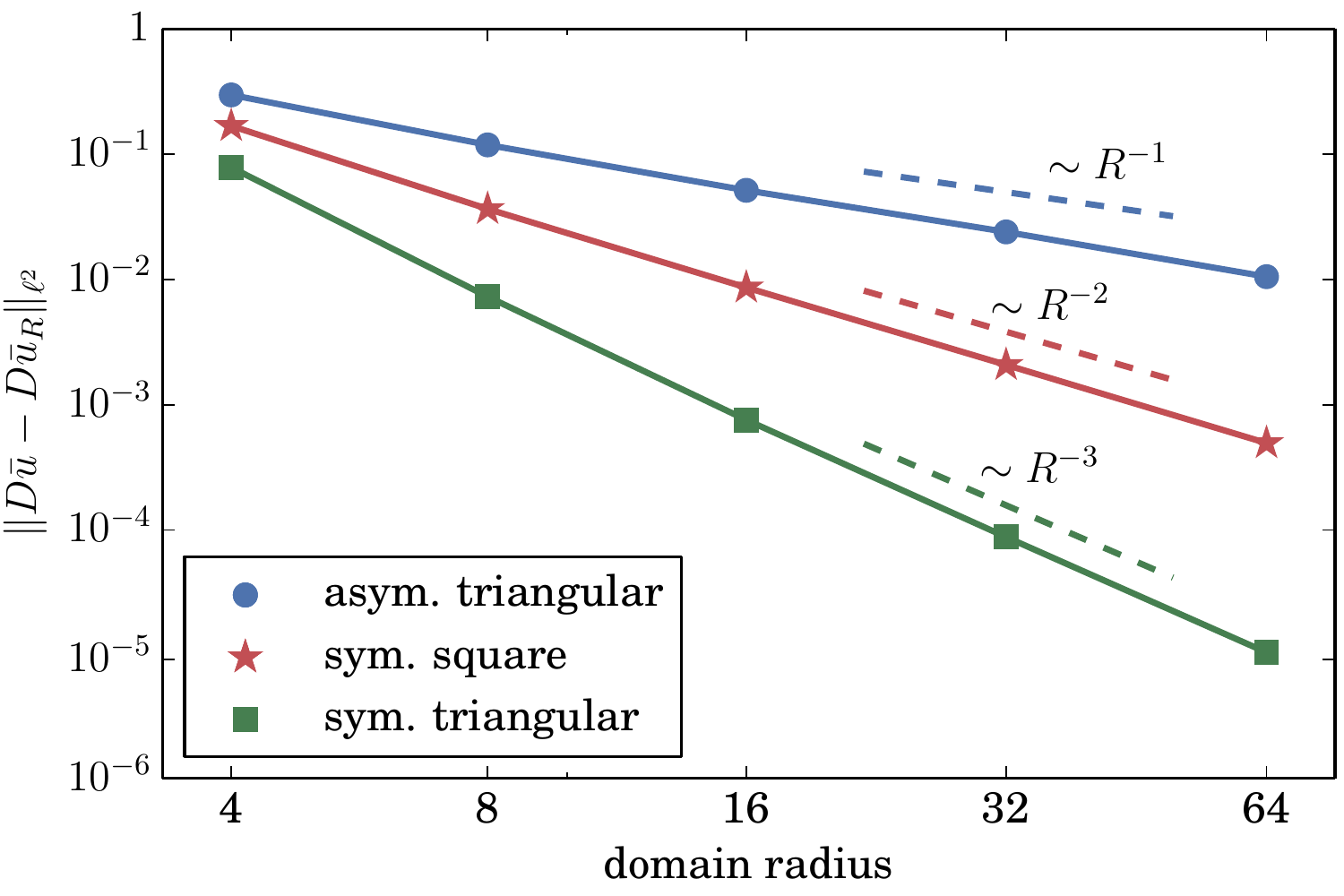}
\label{fig:symm_err}
\end{minipage}
\caption[caption]{Left: Decay of $|\DRc\bar{u}|$ for the triangular and square lattices, with and without rotational symmetry. Transparent dots denote data points $(|x|, |Du(x)|)$, solid curves their envelopes. We observe the improved decay rates $r^{-3}$ and $r^{-4}$, proven in \Cref{thm:highsym}, when the dislocation core is chosen as a high symmetry point.\\\hspace{\textwidth}Right: Rates of convergence of the supercell approximation \cref{eq:approx-prob} in the three cases specified in \Cref{numerics-simple}. We observe the improved rates of decay of the corrector in the high symmetry cases as predicted by \Cref{approx-error-thm}. }
\end{figure}

\subsection{Numerical example in BCC Tungsten}
\label{sec:numerics_bcc}
To confirm the result of \Cref{BCC}, we consider a Finnis--Sinclair type
model (EAM model) for BCC Tungsten (W), where the 3D site energy for a deformation $y$ is of the form
\[
	V'(D'y) = -\Big( \sum_{\sigma \in \Lambda'} \rho\big( |D_\sigma y| \big) \Big)^{1/2}
				+ \sum_{\sigma \in \Lambda'} \phi\big( |D_\sigma y| \big),
\]
and the electron density $\rho$ and pair repulsion $\phi$ are obtained
from \cite{Wang2014}. The projected anti-plane model is then constructed
as described in \Cref{BCC}. The supercell model \cref{eq:approx-prob} is
solved to within an $\ell^\infty$ residual of $10^{-6}$ using a preconditioned
LBFGS algorithm \cite{2016-precon1}.

We investigate two test cases, the easy dislocation core (negatively oriented) and
the hard dislocation core (positively oriented), cf. \cite{bcc-easy-hard}. For each case, following \Cref{BCC}, we consider three different predictors:
\begin{enumerate}[label=(\roman*)]
	 \item standard linearised elasticity predictor (0th order), i.e., $u_{\rm pred} = \hat{u}$;
	 \item 1st order correction, i.e. $u_{\rm pred} = \hat{u} + u_1$;
	 \item 2nd order correction, i.e. $u_{\rm pred} = \hat{u} + u_1 + u_2$,
\end{enumerate}
with $\hat{u}$ given in \cref{screwpredictorformula} and $u_1, u_2$,
respectively, in \cref{u1form} and \cref{u2form}.

In \Cref{fig:bcc-hard,fig:bcc-easy} on the left-hand side we display the decay of the correctors for, respectively, the hard (positive) and easy (negative) dislocation cores, confirming the prediction of \Cref{BCC-result}. On the right-hand side we plot the corresponding approximation errors in the supercell approximation against the domain size $R$, confirming the prediction of \Cref{approx-error-thm}.
\begin{figure}[!htbp]
\centering
\begin{minipage}{0.49\textwidth}
\centering
\includegraphics[width=0.96\linewidth]{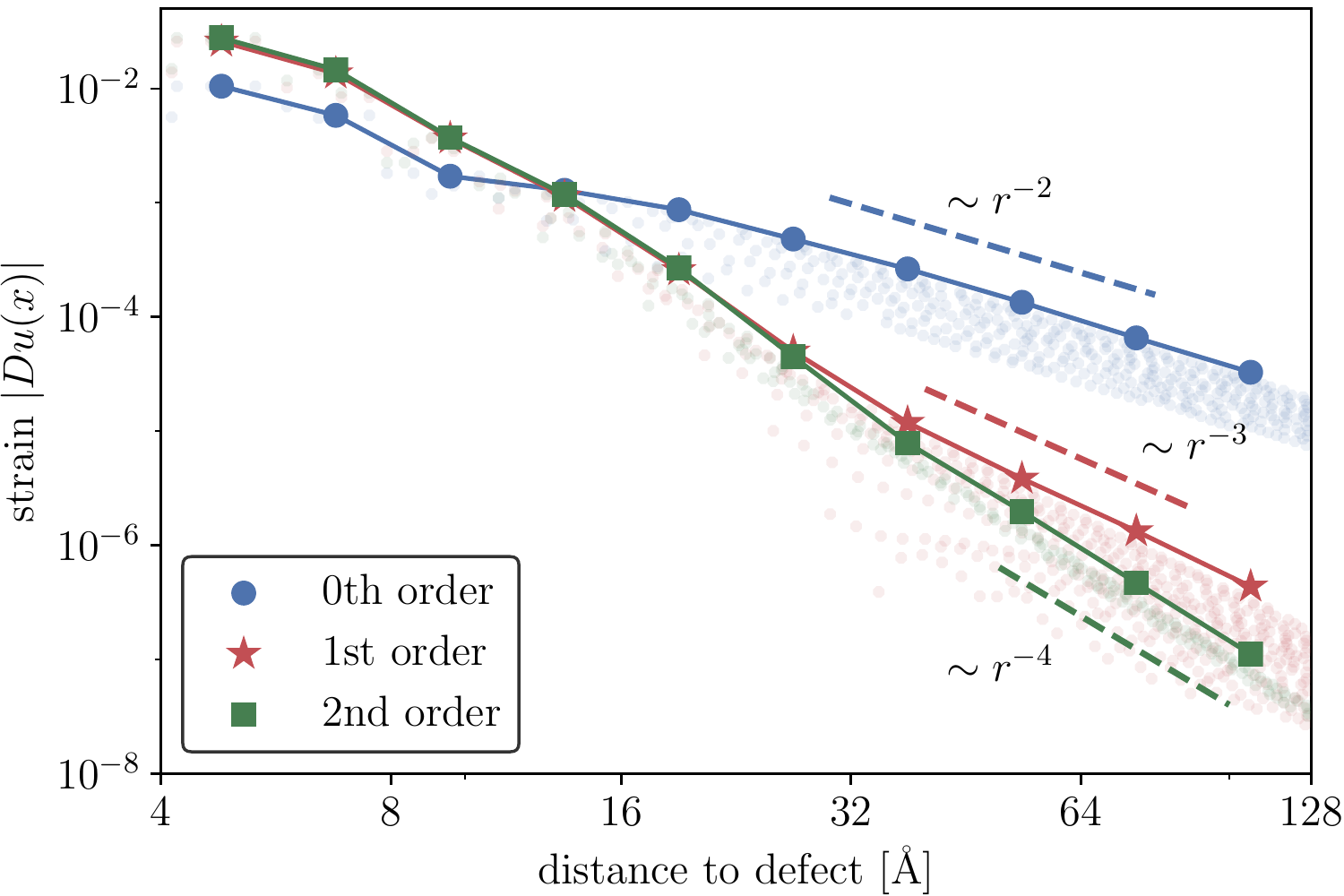}
\end{minipage}
\begin{minipage}{0.49\textwidth}
 \centering
\includegraphics[width=0.96\linewidth]{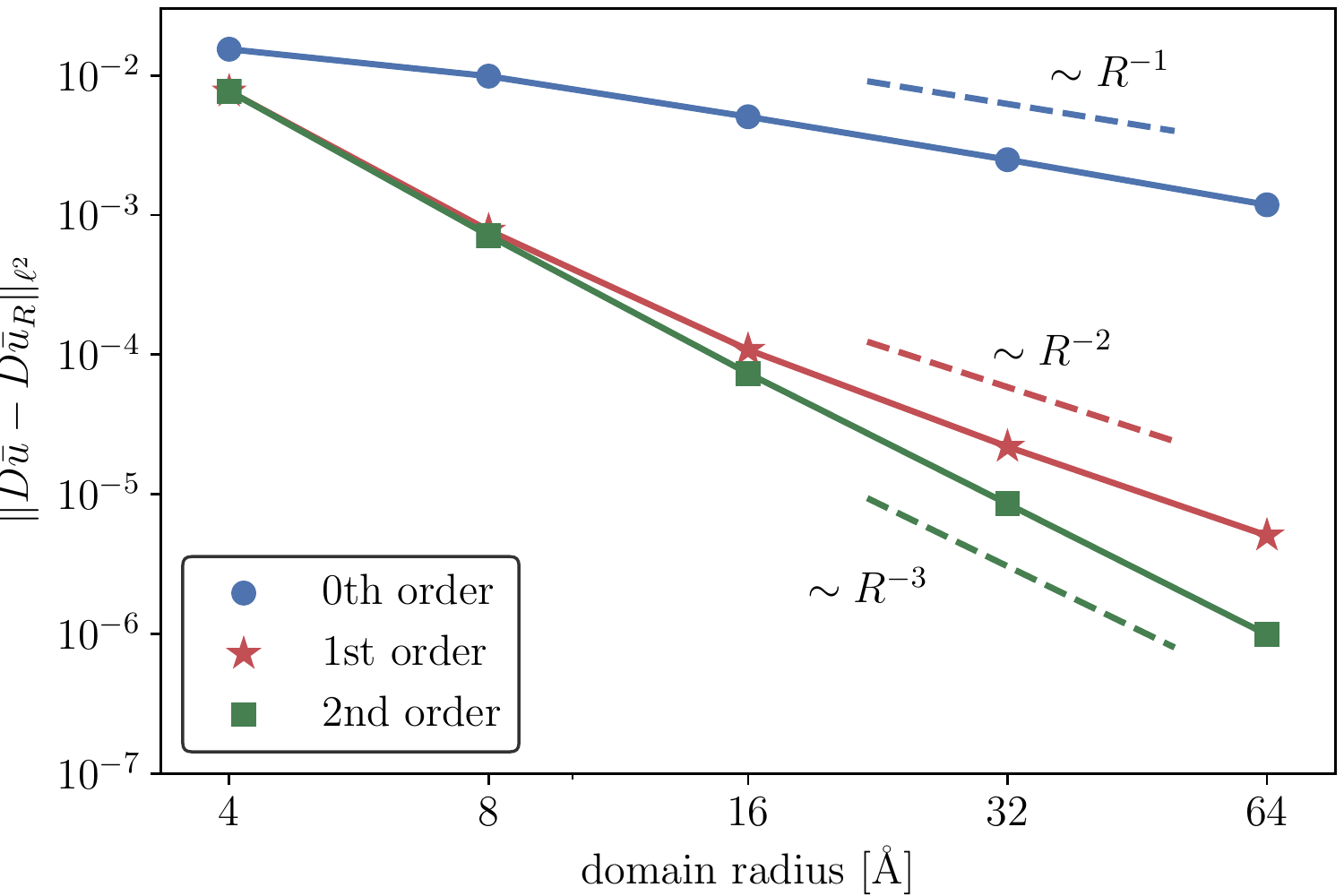}
\end{minipage}
\caption[caption]{Left: Decay of $|\DRc\bar{u}|$ for a BCC easy core screw
dislocation with standard and improved far-field predictors; cf.~\Cref{sec:numerics_bcc}. Transparent dots denote data points $(|x|, |Du(x)|)$, solid curves their envelopes. The numerically observed improved decay for higher-order predictors is consistent with \Cref{BCC-result}.\\\hspace{\textwidth}Right: Rates of convergence of the supercell approximation \cref{eq:approx-prob} to the BCC easy core screw dislocation, employing the standard as well as higher-order far-field predictors. The improved rates of convergence due to the faster decay of the corrector solutions are consistent with \Cref{approx-error-thm}.}
\label{fig:bcc-easy}
\end{figure}
\begin{figure}[!htbp]
\centering
\begin{minipage}{0.49\textwidth}
\centering
\includegraphics[width=0.96\linewidth]{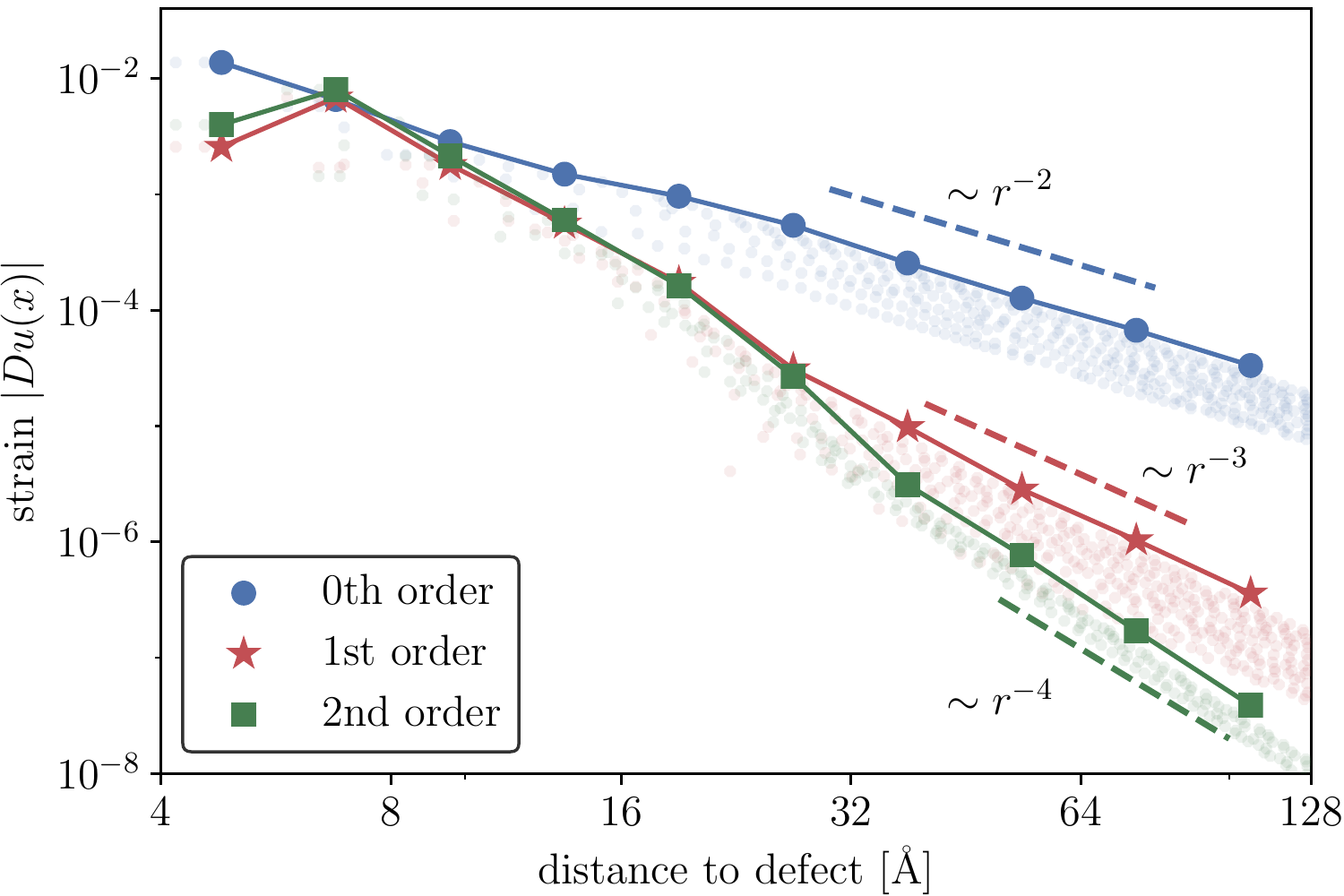}
\end{minipage}
\begin{minipage}{0.49\textwidth}
 \centering
\includegraphics[width=0.96\linewidth]{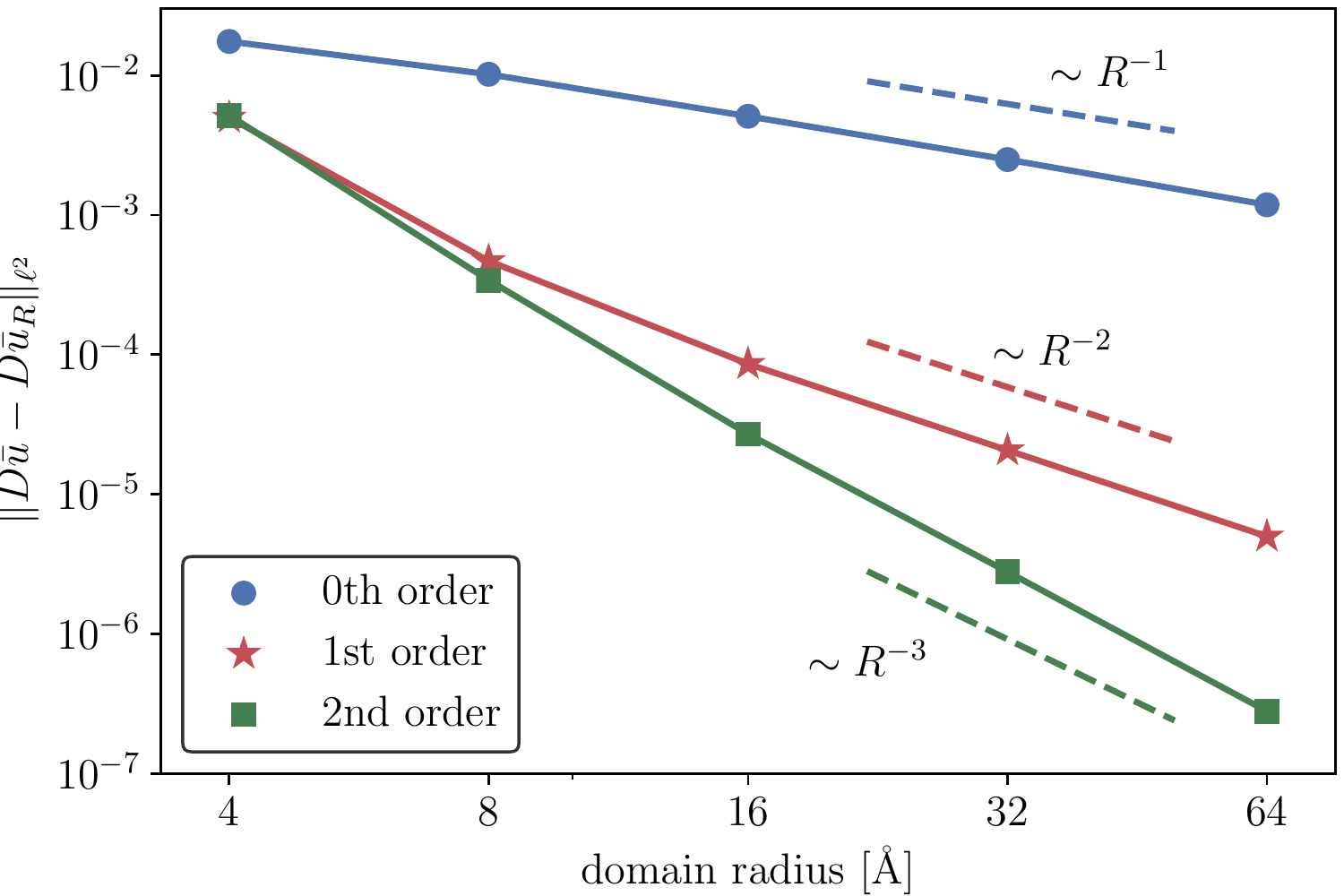}
\end{minipage}
\caption[caption]{Left: Decay of $|\DRc\bar{u}|$ for a BCC hard core screw
dislocation with standard and improved far-field predictors; cf.~\Cref{sec:numerics_bcc}. Transparent dots denote data points $(|x|, |Du(x)|)$, solid curves their envelopes. The numerically observed improved decay for higher-order predictors is consistent with \Cref{BCC-result}.\\\hspace{\textwidth}Right: Rates of convergence of the supercell approximation \cref{eq:approx-prob} to the BCC hard core screw dislocation, employing the standard as well as higher-order far-field predictors. The improved rates of convergence due to the faster decay of the corrector solutions are consistent with \Cref{approx-error-thm}.}
\label{fig:bcc-hard}
\end{figure}

\section{Conclusion} \label{sec:Conclusion}
We have developed a range of results establishing finer properties of the
elastic far-field generated by a screw dislocation in anti-plane shear
kinematics. Of particular note is the role that crystalline symmetries play in
obtaining either cancellation (screw and square lattice) or simple and explicit
representations of the leading order terms of this  elastic far-field.
As a key application we showed how these results can be exploited to obtain
boundary conditions with significantly improved convergence rates in terms
of computational cell size.

Most importantly, though, the general ideas that we outlined in this paper
set the scene for an in-depth study of the elastic far-field for a large variety of defect types, fully vectorial models, and more general crystalline solids. The resulting derivation of higher-order boundary conditions promises to yield simple, efficient as well as highly accurate new algorithms to simulate crystalline defects.

\section{Proofs}\label{Proofs}
\subsection{Auxiliary results about symmetry}
We prove the main results through a number of lemmas, starting with the following observations about how symmetry simplifies the tensors appearing in the development of the forces. This includes, but is not limited to the tensors $\nabla^2 W(0) \in \R^{2 \times 2} = (\R^2)^{\otimes 2}$, $\nabla^3W(0) \in (\R^2)^{\otimes 3}$, and $\nabla^4 W(0)\in (\R^2)^{\otimes 4}$.

Let $m \in \mathbb{N}$, $A\in (\R^2)^{\otimes m}$ and $B \in \R^{2 \times 2}$
then the tensor $B^{\otimes m} A \in (\R^2)^{\otimes m}$ is, as usual, defined by
\[(B^{\otimes m} A)_{l_1 \dots l_m} := \sum_{k \in \{1,2\}^m} A_{k_1 \dots k_m}\prod_{i=1}^m B_{l_i k_i}.\]
As before let $Q$ be a matrix representing either a rotation by $\pi/2$ (in the case $\Lambda = \Z^2$) or a rotation by $2\pi/3$ (in the case $\Lambda = A_{\rm tri} \Z^2$). That is,
\[Q=\begin{pmatrix}
0 & -1 \\ 1 & 0
\end{pmatrix}\text{ or }\quad Q=\begin{pmatrix}
-\sfrac{1}{2} & -\sfrac{\sqrt{3}}{2} \\ \sfrac{\sqrt{3}}{2} & -\sfrac{1}{2}
\end{pmatrix}.\]
More generally, let $Q \in \R^{2 \times 2}$ with $Q^N=\Id$ for some $N \in \N$, $N \geq 1$, and $Q^T Q = \Id$. Our specific cases are included as $N=4$ and $N=3$. We then define
\[PA= \frac{1}{N}\sum_{M=0}^{N -1} (Q^M)^{\otimes m} A.\]

Consider the standard scalar product for tensors,
\[A : B = \sum_{k_1, \dots, k_m =1}^2 A_{k_1 \dots k_m} B_{k_1 \dots k_m}.\]
Then we have the following lemma.
\begin{lemma}\label{projectorlemma}
$P$ is the orthogonal projector onto the $Q$-invariant tensors \[ \{A \colon Q^{\otimes m} A = A\}. \]
\end{lemma}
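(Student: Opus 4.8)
The plan is to recognize that $\Phi := Q^{\otimes m}$ is an orthogonal linear operator on $(\R^2)^{\otimes m}$ of finite order $N$, so that $P = \frac{1}{N}\sum_{M=0}^{N-1} \Phi^M$ is the familiar group-averaging (Reynolds) operator over the cyclic group generated by $Q$; the whole statement then reduces to the standard fact that such an average is the orthogonal projection onto the fixed-point subspace. First I would record the two structural properties of $\Phi$ that we will use. Since $Q^N = \Id$, we have $\Phi^N = (Q^N)^{\otimes m} = \Id$. Since $Q^TQ = \Id$, the operator $\Phi$ is orthogonal for the tensor inner product $A:B$, because $\Phi^T = (Q^T)^{\otimes m} = (Q^{-1})^{\otimes m} = \Phi^{-1}$.

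Next I would verify that $P$ is idempotent and maps into the fixed space. The key identity is $\Phi P = P$: shifting the summation index and using $\Phi^N = \Phi^0 = \Id$ gives $\Phi P = \frac{1}{N}\sum_{M=0}^{N-1} \Phi^{M+1} = \frac{1}{N}\sum_{M=0}^{N-1} \Phi^{M} = P$. Iterating yields $\Phi^M P = P$ for every $M$, so $\Phi(PA) = PA$ shows $\operatorname{range}(P) \subseteq \{A : Q^{\otimes m}A = A\}$, and $P^2 = \frac{1}{N}\sum_{M=0}^{N-1} \Phi^M P = P$. Conversely, if $Q^{\otimes m} A = A$ then $\Phi^M A = A$ for all $M$, hence $PA = A$; thus $P$ acts as the identity on the fixed space and $\operatorname{range}(P) = \{A : Q^{\otimes m}A = A\}$ exactly.

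Finally I would check self-adjointness with respect to $A:B$. Since each $\Phi^M$ is orthogonal, $(\Phi^M)^T = \Phi^{-M}$, so $P^T = \frac{1}{N}\sum_{M=0}^{N-1} \Phi^{-M}$; because $M \mapsto (N-M) \bmod N$ merely permutes $\{0,\dots,N-1\}$ and $\Phi^N = \Id$, this sum equals $P$. A linear map that is simultaneously idempotent, self-adjoint, and has range equal to a prescribed subspace is precisely the orthogonal projector onto that subspace, which completes the proof.

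I do not expect a genuine obstacle here; the only points needing care are the index bookkeeping modulo $N$ and making explicit use of both hypotheses — $Q^N = \Id$ for idempotency and the correct range, and $Q^TQ = \Id$ for self-adjointness. Everything else is routine linear algebra.
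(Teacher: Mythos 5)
Your proof is correct and follows essentially the same route as the paper: both establish $Q^{\otimes m}PA = PA$ by an index shift using $Q^N = \Id$, deduce $P^2 = P$, obtain self-adjointness from $(Q^M)^T = Q^{-M} = Q^{N-M}$ together with reindexing the sum, and finish by noting $PA = A$ whenever $Q^{\otimes m}A = A$. Your explicit framing of $P$ as the group average over the cyclic group generated by $Q^{\otimes m}$ is just a cleaner packaging of the identical argument.
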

\begin{proof}
One readily checks that $Q^{\otimes m} ((Q^M)^{\otimes m} A) = (Q^{M+1})^{\otimes m} A$. Using also $Q^N=\Id$ one immediately obtains $Q^{\otimes m} PA = PA$. Therefore, $P^2 =P$. Since $(Q^M)^T= Q^{-M} = Q^{N-M}$, we also see that $P$ is self-adjoint. Hence, $P$ is an orthogonal projection onto a subspace of $\{A \colon Q^{\otimes m} A = A\}$. But if $Q^{\otimes m} A = A$ then clearly $PA =A$, which concludes the proof.
\end{proof}

\Cref{projectorlemma} will prove highly useful: Explicitly calculating $P$ now allows us to characterise the rotationally invariant tensors.

To simplify that calculation further, we also define the symmetric part by
\[(\sym A)_{l_1 \dots l_m} = \frac{1}{m!}\sum_{\varphi \in S_m} A_{\varphi(l_1)\dots \varphi(l_m)},\]
where $S_m$ is the group of all permutations on $m$ numbers. For all $A$ we define
\[
 P_{\sym}A := P\sym A=\sym PA.
\]
Let us calculate these projections and thus the invariant spaces for the cases we encounter in our proof later.

For a simple notation of three-tensors and four-tensors in the following we will write $E_{ijk} = e_i \otimes e_j \otimes e_k$ and $E_{ijkl} = e_i \otimes e_j \otimes e_k \otimes e_l$ where $\{e_1, e_2\}$ represents the standard base of $\R^2$.

\begin{lemma}\label{tensors-lemma}
\begin{enumerate}[label=(\alph*)]
\item For $m=2$ and $N \geq 3$,
  \[ P_{\sym} A = \sfrac{1}{2} \tr(A) \Id,
    \quad \text{i.e.,} \quad
    \{A \colon Q^{\otimes 2} A = A, \sym A = A\} = \spano \Id
  \]
\item For $m=3$ and $N = 3$,
\begin{align*}
P_{\sym} A
&=\sfrac{1}{4}( E_{111} - 3 \sym E_{122}) (A_{111} - 3 \sym A_{122})\\
 &\quad + \sfrac{1}{4}( E_{222} - 3 \sym E_{112}) (A_{222} - 3 \sym A_{112}), \\
 & \hspace{-2cm} \text{i.e., }  \{A \colon Q^{\otimes 3} A = A, \sym A = A\} = \spano \{ E_{111} - 3 \sym E_{122}, E_{222} - 3 \sym E_{112} \}
\end{align*}
\item For $m=4$ and $N=3$,
\begin{align*}
&( P_{\sym} A)_{abcd}
=\sfrac{1}{8}\Big( \delta_{ab}\delta_{cd} +\delta_{ac}\delta_{bd} +\delta_{ad}\delta_{bc}\Big) (A_{1111} + 2 \sym A_{1122} + A_{2222}), \\
\hspace{-1cm} \text{i.e., } &
  \{A \colon Q^{\otimes 4} A = A, \sym A = A\} = \spano \big\{E_{1111} + E_{2222} + 2\sym{E_{1122}} \big\}
\end{align*}
\end{enumerate}
\end{lemma}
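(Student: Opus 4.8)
The plan is to leverage \Cref{projectorlemma} so that no explicit averaging over the rotation group need be carried out by hand. Since $\sym$ is itself a self-adjoint idempotent (hence an orthogonal projection), and it commutes with $Q^{\otimes m}$—because permuting tensor slots commutes with applying the same $Q$ in every slot—it commutes with $P$. The product of two commuting orthogonal projections is the orthogonal projection onto the intersection of their ranges, so $P_{\sym} = P\sym = \sym P$ is precisely the orthogonal projection onto $V_{m,N} := \{A \colon Q^{\otimes m}A = A,\ \sym A = A\}$, which is the span asserted in each part. It therefore suffices to (i) determine $V_{m,N}$ explicitly and (ii) write down the orthogonal projection onto it.

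For step (i) I would use the identification of symmetric $m$-tensors on $\R^2$ with homogeneous polynomials of degree $m$ via $A \mapsto p_A(x) := \sum_{k} A_{k_1\dots k_m} x_{k_1}\cdots x_{k_m}$. A direct computation gives $p_{Q^{\otimes m}A}(x) = p_A(Q^T x)$, so $A \in V_{m,N}$ iff $p_A$ is invariant under the rotation $Q$. Passing to the complex coordinate $z = x_1 + i x_2$, the degree-$m$ polynomials are spanned by $z^a\bar z^b$ with $a+b=m$, and rotation through $2\pi/N$ multiplies $z^a\bar z^b$ by $e^{i(a-b)2\pi/N}$; invariance is thus equivalent to $a-b \equiv 0 \pmod N$. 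Counting solutions of $a+b=m$ subject to this congruence yields exactly: for $m=2$, $N\geq 3$, only $(a,b)=(1,1)$, i.e.\ $|z|^2 = x_1^2+x_2^2 \leftrightarrow \Id$ (dimension $1$); for $m=3$, $N=3$, the pair $z^3,\bar z^3$, i.e.\ $\Reo z^3 = x_1^3-3x_1x_2^2 \leftrightarrow E_{111}-3\sym E_{122}$ and $\Imo z^3 \leftrightarrow -(E_{222}-3\sym E_{112})$ (dimension $2$); and for $m=4$, $N=3$, only $(a,b)=(2,2)$, i.e.\ $|z|^4 = (x_1^2+x_2^2)^2 \leftrightarrow E_{1111}+E_{2222}+2\sym E_{1122}$ (dimension $1$). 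This identifies each $V_{m,N}$ with the claimed span; that the listed generators lie in $V_{m,N}$ can alternatively be checked directly.

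For step (ii) I would observe that in every case the listed generators have pairwise disjoint index support—for instance $E_{111}$, $\sym E_{122}$, $E_{222}$, $\sym E_{112}$ are supported on index patterns with, respectively, three, one, zero, and two entries equal to $1$—hence are mutually orthogonal, and the orthogonal projection is the sum of the rank-one projections $A \mapsto \tfrac{\<\sym A, u\>}{\<u,u\>}\, u$ over the generators $u$. What remains is elementary: computing the norms ($\|\Id\|^2 = 2$, $\|E_{111}-3\sym E_{122}\|^2 = 4$, $\|E_{1111}+E_{2222}+2\sym E_{1122}\|^2 = \tfrac83$) and the numerators ($\<\sym A,\Id\> = \tr A$, $\<\sym A, E_{111}-3\sym E_{122}\> = A_{111}-3\sym A_{122}$, and analogously for $m=4$), after which rewriting $E_{1111}+E_{2222}+2\sym E_{1122} = \tfrac13 D$ with $D_{abcd} = \delta_{ab}\delta_{cd}+\delta_{ac}\delta_{bd}+\delta_{ad}\delta_{bc}$ reproduces the three displayed formulas. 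The only genuine pitfall is bookkeeping: the normalisation in the definition of $\sym$ forces one to track the multiplicities of index patterns—so that, e.g.\ $(\sym E_{122})_{122}=\tfrac13$, and $2\sym E_{1122}$ takes the value $\tfrac13$ at each of the six two–two patterns, giving the $\tfrac83$ above. The conceptual crux, namely that $V_{m,N}$ has exactly the claimed dimension, is what the complex-variable correspondence settles cleanly.
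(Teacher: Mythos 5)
Your proposal is correct, and it takes a genuinely different route from the paper's proof. The paper treats part (a) by a matrix argument (diagonalise a symmetric $A$ and observe $QDQ^T = D$ forces $D = c\Id$ unless $Q = \pm\Id$), and then proves (b) and (c) by brute force: it evaluates the group average $P$ entrywise on rank-one tensors $\sigma\otimes\rho\otimes\tau$ and $\pi\otimes\sigma\otimes\rho\otimes\tau$, expanding the products over the three rotations by hand. You instead (i) note that $\sym$ and $P$ are commuting orthogonal projections, so $P_{\sym}$ is the orthogonal projection onto the intersection $V_{m,N}$ of their ranges; (ii) identify $V_{m,N}$ via the correspondence between symmetric $m$-tensors and homogeneous degree-$m$ polynomials, on which the rotation acts diagonally in the basis $z^a\bar z^b$ with eigenvalue $e^{i(a-b)2\pi/N}$, so that the invariance condition is the congruence $a\equiv b \pmod N$; and (iii) assemble the projector from rank-one projections onto the generators, which are mutually orthogonal by disjointness of index support. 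I checked the bookkeeping: the eigenvalue count gives exactly the claimed generators ($|z|^2\leftrightarrow\Id$; $\Reo z^3, \Imo z^3 \leftrightarrow E_{111}-3\sym E_{122},\ -(E_{222}-3\sym E_{112})$; $|z|^4\leftrightarrow E_{1111}+E_{2222}+2\sym E_{1122}$), the norms $2$, $4$, $\sfrac{8}{3}$ and the inner products are right (with $(\sym E_{1122})_l = \sfrac{1}{6}$ on each of the six two–two patterns), and $E_{1111}+E_{2222}+2\sym E_{1122} = \sfrac{1}{3}D$ with $D_{abcd} = \delta_{ab}\delta_{cd}+\delta_{ac}\delta_{bd}+\delta_{ad}\delta_{bc}$ indeed reproduces the displayed formula in (c). What each approach buys: yours explains conceptually why the invariant spaces have dimensions $1$, $2$, $1$, handles part (a) uniformly for all $N\geq 3$, and generalises mechanically to any $(m,N)$ via the Fourier-mode congruence, at the cost of invoking the commuting-projections fact and the tensor–polynomial dictionary; the paper's computation is entirely elementary and self-contained and delivers the projector entries directly, but it is tied to $N=3$ and would have to be redone from scratch for other cases. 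One minor caveat, shared equally by the paper's own proof of (a): both arguments implicitly take $Q$ to be a rotation of angle not a multiple of $\pi$ (as in the paper's specific cases $N=3,4$); the lemma's ambient hypothesis $Q^N = \Id$, $Q^TQ = \Id$ would otherwise admit reflections, for which the $m=2$ statement fails — so this is a shared implicit restriction, not a gap in your argument relative to the paper.
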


\begin{proof}
(a)\ We have $(Q \otimes Q) A = A$ if and only if $Q A Q^T = A$. For symmetric $A$, we can diagonalize $A = R D R^T$ with some rotation $R$ and a diagonal matrix $D$. But then $Q A Q^T = A$ is equivalent to $Q D Q^T = D$. This is the case precisely if $D =c\Id$ or $Q \in \{\pm \Id\}$. Since we excluded the latter option we find $\{ A\colon (Q \otimes Q) A = A\} = \Id \R$ as claimed.

\noindent(b)\ This statement is more involved and notably depends on $N$. Therefore a general argument as in (a) cannot work. One way of obtaining the result is to calculate the projector explicitly. By linearity, it suffices to consider $A= \sigma \otimes \rho \otimes \tau$. In this case, $Q^{\otimes m} A= Q\sigma \otimes Q\rho \otimes Q\tau$. We get
\begin{align*}
3(P (\sigma \otimes \rho \otimes \tau))_{111} 
&= \sigma_1 \rho_1 \tau_1 +(-\sfrac{1}{2}\sigma_1-\sfrac{\sqrt{3}}{2}\sigma_2)(-\sfrac{1}{2}\rho_1-\sfrac{\sqrt{3}}{2}\rho_2)(-\sfrac{1}{2}\tau_1-\sfrac{\sqrt{3}}{2}\tau_2)\\
&\quad +(-\sfrac{1}{2}\sigma_1+\sfrac{\sqrt{3}}{2}\sigma_2)(-\sfrac{1}{2}\rho_1+\sfrac{\sqrt{3}}{2}\rho_2)(-\sfrac{1}{2}\tau_1+\sfrac{\sqrt{3}}{2}\tau_2)\\
&= \sfrac{3}{4}\big(\sigma_1 \rho_1 \tau_1 -\sigma_1 \rho_2 \tau_2- \sigma_2 \rho_1 \tau_2 - \sigma_2 \rho_2 \tau_1\big), \\
3(P (\sigma \otimes \rho \otimes \tau))_{222} 
&= \sigma_2 \rho_2 \tau_2 +(\sfrac{\sqrt{3}}{2}\sigma_1-\sfrac{1}{2}\sigma_2)(\sfrac{\sqrt{3}}{2}\rho_1-\sfrac{1}{2}\rho_2)(\sfrac{\sqrt{3}}{2}\tau_1-\sfrac{1}{2}\tau_2)\\
&\quad +(-\sfrac{\sqrt{3}}{2}\sigma_1-\sfrac{1}{2}\sigma_2)(-\sfrac{\sqrt{3}}{2}\rho_1-\sfrac{1}{2}\rho_2)(-\sfrac{\sqrt{3}}{2}\tau_1-\sfrac{1}{2}\tau_2)\\
&= \sfrac{3}{4}\big(\sigma_2 \rho_2 \tau_2 -\sigma_1 \rho_1 \tau_2- \sigma_1 \rho_2 \tau_1 - \sigma_2 \rho_1 \tau_1\big),  \\
3(P (\sigma \otimes \rho \otimes \tau))_{112}  
&= \sigma_1 \rho_1 \tau_2 +(-\sfrac{1}{2}\sigma_1-\sfrac{\sqrt{3}}{2}\sigma_2)(-\sfrac{1}{2}\rho_1-\sfrac{\sqrt{3}}{2}\rho_2)(\sfrac{\sqrt{3}}{2}\tau_1-\sfrac{1}{2}\tau_2)\\
&\quad +(-\sfrac{1}{2}\sigma_1+\sfrac{\sqrt{3}}{2}\sigma_2)(-\sfrac{1}{2}\rho_1+\sfrac{\sqrt{3}}{2}\rho_2)(-\sfrac{\sqrt{3}}{2}\tau_1-\sfrac{1}{2}\tau_2)\\
&=\sfrac{3}{4}\big(-\sigma_2 \rho_2 \tau_2 +\sigma_1 \rho_1 \tau_2+ \sigma_1 \rho_2 \tau_1 +\sigma_2 \rho_1 \tau_1\big), \qquad \text{and} \\
3(P (\sigma \otimes \rho \otimes \tau))_{122}   
&= \sigma_1 \rho_2 \tau_2 +(-\sfrac{1}{2}\sigma_1-\sfrac{\sqrt{3}}{2}\sigma_2)(\sfrac{\sqrt{3}}{2}\rho_1-\sfrac{1}{2}\rho_2)(\sfrac{\sqrt{3}}{2}\tau_1-\sfrac{1}{2}\tau_2)\\
&\quad +(-\sfrac{1}{2}\sigma_1+\sfrac{\sqrt{3}}{2}\sigma_2)(-\sfrac{\sqrt{3}}{2}\rho_1-\sfrac{1}{2}\rho_2)(-\sfrac{\sqrt{3}}{2}\tau_1-\sfrac{1}{2}\tau_2)\\
&=\sfrac{3}{4}\big(-\sigma_1 \rho_1 \tau_1 +\sigma_1 \rho_2 \tau_2+ \sigma_2 \rho_1 \tau_2 +\sigma_2 \rho_2 \tau_1\big).
\end{align*}
This concludes (b).

\noindent(c)\ Again, this statement depends on $N$, so we will calculate the projector explicitly. Similar as before, it suffices to consider $A= \pi \otimes \sigma \otimes \rho \otimes \tau$. We find
\begin{align*}
3(P A)_{1111}&= \pi_1 \sigma_1 \rho_1 \tau_1 \\
&\quad +(-\sfrac{1}{2}\pi_1-\sfrac{\sqrt{3}}{2}\pi_2)(-\sfrac{1}{2}\sigma_1-\sfrac{\sqrt{3}}{2}\sigma_2)(-\sfrac{1}{2}\rho_1-\sfrac{\sqrt{3}}{2}\rho_2)(-\sfrac{1}{2}\tau_1-\sfrac{\sqrt{3}}{2}\tau_2)\\
&\quad +(-\sfrac{1}{2}\pi_1+\sfrac{\sqrt{3}}{2}\pi_2)(-\sfrac{1}{2}\sigma_1+\sfrac{\sqrt{3}}{2}\sigma_2)(-\sfrac{1}{2}\rho_1+\sfrac{\sqrt{3}}{2}\rho_2)(-\sfrac{1}{2}\tau_1+\sfrac{\sqrt{3}}{2}\tau_2)\\
&= \sfrac{9}{8}(\pi_1 \sigma_1 \rho_1 \tau_1+\pi_2 \sigma_2 \rho_2 \tau_2)+ \sfrac{3}{8}\big(\pi_1 \sigma_1 \rho_2 \tau_2+\pi_1 \sigma_2 \rho_1 \tau_2\\
&\quad+\pi_1 \sigma_2 \rho_2 \tau_1+\pi_2 \sigma_1 \rho_1 \tau_2+\pi_2 \sigma_1 \rho_2 \tau_1+\pi_2 \sigma_2 \rho_1 \tau_1\big) \qquad \text{and} \\
3(P A)_{2222}&= \pi_2 \sigma_2 \rho_2 \tau_2 \\
&\quad +(\sfrac{\sqrt{3}}{2}\pi_1-\sfrac{1}{2}\pi_2)(\sfrac{\sqrt{3}}{2}\sigma_1-\sfrac{1}{2}\sigma_2)(\sfrac{\sqrt{3}}{2}\rho_1-\sfrac{1}{2}\rho_2)(\sfrac{\sqrt{3}}{2}\tau_1-\sfrac{1}{2}\tau_2)\\
&\quad +(-\sfrac{\sqrt{3}}{2}\pi_1-\sfrac{1}{2}\pi_2)(-\sfrac{\sqrt{3}}{2}\sigma_1-\sfrac{1}{2}\sigma_2)(-\sfrac{\sqrt{3}}{2}\rho_1-\sfrac{1}{2}\rho_2)(-\sfrac{\sqrt{3}}{2}\tau_1-\sfrac{1}{2}\tau_2)\\
&= \sfrac{9}{8}(\pi_1 \sigma_1 \rho_1 \tau_1+\pi_2 \sigma_2 \rho_2 \tau_2)+ \sfrac{3}{8}\big(\pi_1 \sigma_1 \rho_2 \tau_2+\pi_1 \sigma_2 \rho_1 \tau_2\\
&\quad+\pi_1 \sigma_2 \rho_2 \tau_1+\pi_2 \sigma_1 \rho_1 \tau_2+\pi_2 \sigma_1 \rho_2 \tau_1+\pi_2 \sigma_2 \rho_1 \tau_1\big).
\end{align*}
By interchanging $\pi,\sigma,\rho,\tau$, the even mixed terms can be reduced to calculating just
\begin{align*}
3(P A)_{1122}&= \pi_1 \sigma_1 \rho_2 \tau_2 \\
&\quad +(-\sfrac{1}{2}\pi_1-\sfrac{\sqrt{3}}{2}\pi_2)(-\sfrac{1}{2}\sigma_1-\sfrac{\sqrt{3}}{2}\sigma_2)(\sfrac{\sqrt{3}}{2}\rho_1-\sfrac{1}{2}\rho_2)(\sfrac{\sqrt{3}}{2}\tau_1-\sfrac{1}{2}\tau_2)\\
&\quad +(-\sfrac{1}{2}\pi_1+\sfrac{\sqrt{3}}{2}\pi_2)(-\sfrac{1}{2}\sigma_1+\sfrac{\sqrt{3}}{2}\sigma_2)(-\sfrac{\sqrt{3}}{2}\rho_1-\sfrac{1}{2}\rho_2)(-\sfrac{\sqrt{3}}{2}\tau_1-\sfrac{1}{2}\tau_2)\\
&= \sfrac{9}{8}(\pi_1 \sigma_1 \rho_2 \tau_2+\pi_2 \sigma_2 \rho_1 \tau_1)+ \sfrac{3}{8}\big(\pi_1 \sigma_1 \rho_1 \tau_1+\pi_2 \sigma_2 \rho_2 \tau_2\\
&\quad-\pi_1 \sigma_2 \rho_2 \tau_1-\pi_2 \sigma_1 \rho_1 \tau_2-\pi_2 \sigma_1 \rho_2 \tau_1-\pi_1 \sigma_2 \rho_1 \tau_2\big).
\end{align*}
For the symmetric part, these formulae simplify to
\begin{align*}
(P \sym A)_{1111}&= (P \sym A)_{2222}\\
&= 3(P \sym A)_{1122}\\
&= \sfrac{3}{8}(\pi_1 \sigma_1 \rho_1 \tau_1+\pi_2 \sigma_2 \rho_2 \tau_2)+ \sfrac{6}{8} \sym (\pi \otimes \sigma \otimes \rho \otimes \tau)_{1122},
\end{align*}
Furthermore,
\begin{align*}
3(P A)_{1112}&= \pi_1 \sigma_1 \rho_1 \tau_2 \\
&\quad +(-\sfrac{1}{2}\pi_1-\sfrac{\sqrt{3}}{2}\pi_2)(-\sfrac{1}{2}\sigma_1-\sfrac{\sqrt{3}}{2}\sigma_2)(-\sfrac{1}{2}\rho_1-\sfrac{\sqrt{3}}{2}\rho_2)(\sfrac{\sqrt{3}}{2}\tau_1-\sfrac{1}{2}\tau_2)\\
&\quad +(-\sfrac{1}{2}\pi_1+\sfrac{\sqrt{3}}{2}\pi_2)(-\sfrac{1}{2}\sigma_1+\sfrac{\sqrt{3}}{2}\sigma_2)(-\sfrac{1}{2}\rho_1+\sfrac{\sqrt{3}}{2}\rho_2)(-\sfrac{\sqrt{3}}{2}\tau_1-\sfrac{1}{2}\tau_2)\\
&= \sfrac{9}{8}(\pi_1 \sigma_1 \rho_1 \tau_2-\pi_2 \sigma_2 \rho_2 \tau_1)+ \sfrac{3}{8}\big(\pi_2 \sigma_2 \rho_1 \tau_2+\pi_2 \sigma_1 \rho_2 \tau_2\\
&\quad+\pi_1 \sigma_2 \rho_2 \tau_2-\pi_1 \sigma_1 \rho_2 \tau_1-\pi_2 \sigma_1 \rho_1 \tau_1-\pi_2 \sigma_1 \rho_1 \tau_1\big),
\end{align*}
which implies $(P \sym A)_{1112}=0$. In the same spirit one finds $(P \sym A)_{1222}=0$.
\end{proof}

Additionally, the result for $m=N=3$ simplifies further if we add line reflection symmetry. As in \Cref{BCC}, let $a= (\frac{\sqrt{3}}{2}, \frac{1}{2} )^T$ and
\[S = a \otimes a - a^\bot \otimes a^\bot=\begin{pmatrix}
\frac{1}{2} &\frac{\sqrt{3}}{2} \\ \frac{\sqrt{3}}{2} & - \frac{1}{2}
\end{pmatrix}. \]

\begin{lemma}\label{tensors-lemma2}
For $m=3$ and $N =3$ one has
\[
  \{A \colon Q^{\otimes 3} A = A, \sym A = A, S^{\otimes 3} A = - A \} = \spano \{ E_{111} - 3 \sym E_{122}\}.
  \]
\end{lemma}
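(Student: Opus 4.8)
The plan is to build directly on \Cref{tensors-lemma}(b), which already identifies the space of symmetric $Q$-invariant three-tensors as the two-dimensional space $\spano\{E_{111}-3\sym E_{122},\,E_{222}-3\sym E_{112}\}$. Thus the only remaining task is to impose the extra reflection constraint $S^{\otimes 3}A=-A$ on this two-dimensional space and to verify that it cuts out exactly the one-dimensional span claimed. First I would record that $S^{\otimes 3}$ maps this space into itself: symmetrisation commutes with every tensor power, so $S^{\otimes 3}$ preserves symmetric tensors, and since $S$ is orthogonal with $S^2=\Id$ and conjugates the rotation to its inverse, $SQS=Q^{-1}$, one computes $Q^{\otimes 3}S^{\otimes 3}A=(QS)^{\otimes 3}A=S^{\otimes 3}(Q^{\otimes 3})^{-1}A=S^{\otimes 3}A$ for every $Q$-invariant $A$. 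Hence $S^{\otimes 3}$ restricts to a self-map of the two-dimensional space, and since $(S^{\otimes 3})^2=(S^2)^{\otimes 3}=\Id$ it is an involution with eigenvalues in $\{\pm1\}$; the set $\{A:S^{\otimes 3}A=-A\}$ is precisely its $-1$-eigenspace.

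It then remains to compute the action of $S^{\otimes 3}$ on the two basis tensors. The cleanest route is to identify a symmetric three-tensor $A$ with the cubic form $p_A(x)=\sum_{ijk}A_{ijk}x_ix_jx_k$; this is a linear bijection onto homogeneous cubics that intertwines $A\mapsto B^{\otimes 3}A$ with precomposition $p\mapsto p\circ B^{T}$. Under this correspondence the two basis tensors become $E_{111}-3\sym E_{122}\leftrightarrow x_1^3-3x_1x_2^2=\Reo(z^3)$ and $E_{222}-3\sym E_{112}\leftrightarrow x_2^3-3x_1^2x_2=-\Imo(z^3)$, where $z=x_1+ix_2$, that is $r^3\cos 3\varphi$ and $-r^3\sin 3\varphi$ in polar coordinates. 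Since $S$ is the reflection in the line through $a=(\tfrac{\sqrt3}{2},\tfrac12)^T$, i.e. the line at angle $\pi/6$, and $S^T=S$, the map $p\mapsto p\circ S$ acts on polar angles by $\varphi\mapsto \pi/3-\varphi$. Hence $\cos 3\varphi\mapsto\cos(\pi-3\varphi)=-\cos 3\varphi$ while $\sin 3\varphi\mapsto\sin(\pi-3\varphi)=\sin 3\varphi$, so $S^{\otimes 3}(E_{111}-3\sym E_{122})=-(E_{111}-3\sym E_{122})$ and $S^{\otimes 3}(E_{222}-3\sym E_{112})=+(E_{222}-3\sym E_{112})$.

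Reading off the eigenspaces then finishes the proof: the $-1$-eigenspace is exactly $\spano\{E_{111}-3\sym E_{122}\}$, which is the asserted invariant space, since the constraint forces the coefficient of the fixed vector $E_{222}-3\sym E_{112}$ to vanish. The only delicate point is the sign bookkeeping in the last step; if one prefers to avoid the polynomial dictionary, the identical conclusion follows by applying $S^{\otimes 3}$ to the two basis tensors entrywise, exactly as in the proof of \Cref{tensors-lemma}(b), which I expect to be the main (though entirely routine) computational obstacle. Either way the reflection sends one basis vector to its negative and fixes the other, which is precisely the claim.
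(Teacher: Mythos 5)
Your proof is correct, and while it shares the paper's skeleton---both arguments start from \Cref{tensors-lemma}(b) to reduce everything to the two-dimensional space $\spano\{E_{111}-3\sym E_{122},\, E_{222}-3\sym E_{112}\}$---the way you then impose the reflection constraint is genuinely different. The paper makes a single scalar evaluation: since $Sa=a$, the constraint $S^{\otimes 3}A=-A$ forces $A[a,a,a]=0$, and evaluating the two basis tensors at $a$ (values $0$ and $\sfrac{1}{8}-\sfrac{9}{8}$) kills the coefficient $c_2$; the reverse inclusion, that $E_{111}-3\sym E_{122}$ really satisfies $S^{\otimes 3}A=-A$, is only asserted ``with the same calculation'' (and note that a single evaluation at $a$ cannot by itself certify this, since a symmetric tensor is not determined by its value at one vector). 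You instead diagonalise $S^{\otimes 3}$ on the whole two-dimensional space: you first verify it is a well-defined involution of that space (via equivariance of $\sym$ and $SQS=Q^{-1}$, a structural point the paper never makes explicit), and then compute its action through the dictionary between symmetric three-tensors and cubic forms, $E_{111}-3\sym E_{122}\leftrightarrow r^3\cos 3\varphi$ and $E_{222}-3\sym E_{112}\leftrightarrow -r^3\sin 3\varphi$, with the reflection acting on angles by $\varphi\mapsto \pi/3-\varphi$, hence $3\varphi\mapsto\pi-3\varphi$. All your intermediate identities check out (the intertwining $B^{\otimes 3}\leftrightarrow p\mapsto p\circ B^T$, the polar images of the basis tensors, and the eigenvalues $-1$ and $+1$, which are consistent with the paper's evaluations $p(a)=0$ and $p(a)=-1$ at the fixed vector). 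What your route buys is a complete two-sided argument---both inclusions are proved rather than one being sketched---plus a structural explanation of the answer: a line reflection splits the degree-three circular harmonics into a $+1$- and a $-1$-eigenline, so the constraint set must be exactly one-dimensional. It also exposes a small slip in the paper's closing sentence, which names $E_{222}-3\sym E_{112}$ as satisfying the reflection (anti)symmetry; your computation shows this tensor is the $+1$-eigenvector, and it is $E_{111}-3\sym E_{122}$ that satisfies $S^{\otimes 3}A=-A$, as the statement of the lemma requires.
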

\begin{proof}
Let $A$ be a tensor with $Q^{\otimes 3} A = A$, $\sym A = A$, and $S^{\otimes 3} A = - A$. According to \Cref{tensors-lemma}, $A = c_1( E_{111} - 3 \sym E_{122}) + c_2 ( E_{222} - 3 \sym E_{112})$. Additionally, $S^{\otimes 3} A = - A$ implies $A[Sa,Sa,Sa] = - A[a,a,a]$. But with $Sa=a$ we have $A[a,a,a]=0$; that is,
\[0= c_1 (\sfrac{3\sqrt{3}}{8} - 3\sfrac{\sqrt{3}}{8}) + c_2 ( \sfrac{1}{8} - 3\sfrac{3}{8}),\]
which implies $c_2 = 0$. With the same calculation one also sees the reverse, i.e., that $E_{222} - 3 \sym E_{112}$ does indeed satisfy the reflection symmetry.
\end{proof}

Among other applications later on in the analysis, \Cref{tensors-lemma,tensors-lemma2} can be used for the following two corollaries. As a first corollary, we recover a classical result about isotropic linear elasticity (compare, e.g., \cite{LL59} for the analogous three-dimensional case).

\begin{corollary}\label{D2W-cor}
In the setting of \Cref{model}, for $W$ given by \cref{CB-W}, one finds $\nabla^2W(0) = c_{\rm lin} \Id$, for some $c_{\rm lin} > 0$, and therefore
 \[
  -\divo (\nabla^2 W(0)[\nabla u]) =-c_{\rm lin} \Delta u
 \]
\end{corollary}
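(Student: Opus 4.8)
The plan is to show that $\nabla^2 W(0)$ is a symmetric, $Q_\La$-invariant two-tensor, so that \Cref{tensors-lemma}(a) pins it down to a multiple of the identity, and then to extract positivity of the resulting constant from lattice stability. The divergence identity is then immediate: once $\nabla^2 W(0) = c_{\rm lin}\Id$, we have $\nabla^2 W(0)[\nabla u] = c_{\rm lin}\nabla u$ and hence $\divo(\nabla^2 W(0)[\nabla u]) = c_{\rm lin}\Delta u$. So the entire content is the first claim, and I would organise it in three steps.

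First I would differentiate the definition \cref{CB-W} twice. Writing $g(F)_\rho = F\cdot\rho$, the chain rule yields the explicit formula
\[
  \nabla^2 W(0)_{ij} = \frac{1}{\det A_\La}\sum_{\rho,\sigma\in\Rc}\nabla^2V(0)_{\rho\sigma}\,\rho_i\sigma_j,
\]
which is manifestly symmetric in $i,j$ since $\nabla^2V(0)$ is symmetric. Next I would record how the rotational symmetry \cref{eq:rotsym} constrains $\nabla^2V(0)$: the map $A\mapsto(A_{Q_\La\rho})_{\rho\in\Rc}$ is a linear, coordinate-permuting (orthogonal) map leaving $V$ invariant, so differentiating twice at $A=0$ gives $\nabla^2V(0)_{\rho\sigma} = \nabla^2V(0)_{Q_\La^{-1}\rho,\,Q_\La^{-1}\sigma}$; that is, $\nabla^2V(0)$ is constant on $Q_\La$-orbits of index pairs.

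With these two identities in hand, I would verify the invariance $Q_\La\,\nabla^2 W(0)\,Q_\La^T = \nabla^2 W(0)$: applying $Q_\La$ on each index turns $\rho_i\sigma_j$ into $(Q_\La\rho)_i(Q_\La\sigma)_j$, and then reindexing by the bijection $\rho\mapsto Q_\La\rho$, $\sigma\mapsto Q_\La\sigma$ of $\Rc$ (valid since $Q_\La\Rc=\Rc$) together with the orbit-invariance of $\nabla^2V(0)$ returns the original sum. In tensor notation this is exactly $Q_\La^{\otimes 2}\nabla^2 W(0) = \nabla^2 W(0)$, so $\nabla^2 W(0)$ lies in the symmetric $Q_\La$-invariant subspace. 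Since $Q_\La$ has order $N\in\{3,4\}\geq 3$, \Cref{tensors-lemma}(a) forces $\nabla^2 W(0) = c_{\rm lin}\Id$ for some scalar $c_{\rm lin}$.

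The step I expect to be the real work is positivity, $c_{\rm lin}>0$, which must come from lattice stability \cref{eq:latticestability}. Evaluating the quadratic form on a constant strain gives $F^T\nabla^2 W(0)F = c_{\rm lin}|F|^2 = (\det A_\La)^{-1}\sum_{\rho,\sigma}\nabla^2V(0)_{\rho\sigma}(F\cdot\rho)(F\cdot\sigma)$, so it suffices to show the right-hand form is positive for $F\neq 0$. I would obtain this from the long-wavelength limit of \cref{eq:latticestability}: diagonalising $\Hhom$ by the discrete Fourier transform, stability reads $\sum_{\rho,\sigma}\nabla^2V(0)_{\rho\sigma}(e^{ik\cdot\rho}-1)\overline{(e^{ik\cdot\sigma}-1)} \geq c_0\sum_\rho|e^{ik\cdot\rho}-1|^2$ across the Brillouin zone; dividing by $|k|^2$ and letting $k\to 0$ along a direction $F$ gives $\sum_{\rho,\sigma}\nabla^2V(0)_{\rho\sigma}(F\cdot\rho)(F\cdot\sigma)\geq c_0\sum_\rho(F\cdot\rho)^2$, and the latter is strictly positive for $F\neq 0$ because $\spano_\Z\Rc=\La$ spans $\R^2$. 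Hence $c_{\rm lin}>0$. The only delicate point is making the Fourier representation of \cref{eq:latticestability} rigorous on the quotient space $\Hcc$ (handling the factored-out constants, i.e.\ the $k=0$ mode), which is a standard but careful density argument.
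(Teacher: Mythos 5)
Your proposal is correct and, for the main structural claim, follows the paper's proof essentially step for step: the same explicit formula $\nabla^2W(0) = \frac{1}{\det A_\La}\sum_{\rho,\sigma\in\Rc}\nabla^2V(0)_{\rho\sigma}\,\rho\otimes\sigma$, the same transfer of the symmetry \cref{eq:rotsym} to $\nabla^2V(0)_{\rho\sigma} = \nabla^2V(0)_{Q\rho\,Q\sigma}$ (your $Q_\La^{-1}$-form is equivalent after reindexing), the same reindexing over $Q_\La\Rc = \Rc$ to conclude $Q_\La$-invariance, and the same appeal to \Cref{tensors-lemma}(a). The one genuine difference is the positivity of $c_{\rm lin}$: the paper disposes of it in one line by citing that lattice stability implies Legendre--Hadamard stability of the Cauchy--Born limit \cite{HO2012, braun16static}, whereas you prove this implication directly via the Fourier symbol of $\Hhom$, passing to the long-wavelength limit $k \to 0$ along a direction $F$ and using $\spano_\Z\Rc = \La$ to ensure $\sum_\rho (F\cdot\rho)^2 > 0$. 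Your argument is sound and is in fact the standard proof behind the cited result; the points you flag yourself (justifying the pointwise symbol inequality on the Brillouin zone by testing \cref{eq:latticestability} with wave packets, and handling the quotient by constants, i.e.\ the $k=0$ mode, via density of compactly supported displacements) are exactly where the care is needed. What the paper's route buys is brevity; what yours buys is a self-contained proof, at the cost of reproving a known lemma.
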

\begin{proof}
According to \cref{CB-W}  $W(F) = \frac{1}{\det A_{\La}}V(F\cdot\Rc)$, hence we have
\[
\nabla^2W(0) = \frac{1}{\det A_{\La}}\sum_{\rho,\sigma\in\Rc} \nabla^2 V(0)_{\rho\sigma}\rho\otimes\sigma.
\]
We further notice that due to the rotational symmetry of $\Rc$ and $V$, \cref{eq:rotsym}, we have $\nabla^2V(0)_{\rho \sigma}= \nabla^2V(0)_{Q\rho Q\sigma}$, hence we can equivalently write
\[
 \nabla^2 W(0) = \frac{1}{\det A_{\La}}\sum_{\rho,\sigma\in\Rc} \nabla^2 V(0)_{\rho \sigma}Q\rho\otimes Q\sigma.
\]
In particular,
\[
 \nabla^2 W(0) \in \{A \in \R^{2\times 2} \,:\,(Q\otimes Q)A = A\}.
\]
It is also clear that $\nabla^2W(0)$ is symmetric, thus
\[
 P_{\sym}\nabla^2W(0) = \nabla^2W(0)
\]
and so we invoke \Cref{tensors-lemma} to conclude that
\[
 \nabla^2W(0) = \left(\frac{1}{2 \det A_{\La}}\sum_{\rho,\sigma\in\Rc} \nabla^2V(0)_{\rho \sigma} \rho \cdot \sigma\right) \Id =: c_{\rm lin} \Id.
\]
Since lattice stability implies Legendre-Hadamard stability of the Cauchy-Born limit \cite{HO2012, braun16static}, it follows that $c_{\rm lin}>0$.
\end{proof}

As a second corollary, we can even identify the lowest order nonlinearity.

\begin{corollary}\label{D3W-cor}
In the setting of \Cref{model} assuming additionally the line reflection symmetry \cref{eq:linereflectionsym}, for $W$ given by \cref{CB-W}, one finds $\nabla^3W(0) = c_{\rm quad}( E_{111} - 3 \sym E_{122})$,
for some $c_{\rm quad} \in \R$, and therefore
 \[
  \divo (\nabla^3 W(0)[\nabla u, \nabla v]) =c_{\rm quad} \bigg(\begin{pmatrix}
\partial_{11} v - \partial_{22} v \\ -2 \partial_{12} v
\end{pmatrix} \cdot \nabla u  +  \begin{pmatrix}
\partial_{11} u - \partial_{22} u \\ -2 \partial_{12} u
\end{pmatrix}\cdot\nabla v \bigg).
 \]
\end{corollary}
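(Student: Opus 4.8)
The plan is to follow the template of \Cref{D2W-cor}, replacing the second-order tensor by the third-order one and invoking \Cref{tensors-lemma2} in place of \Cref{tensors-lemma}(a). First I would differentiate \cref{CB-W} three times at $F=0$ to obtain the explicit representation
\[
  \nabla^3 W(0) = \frac{1}{\det A_\La}\sum_{\rho,\sigma,\tau\in\Rc}\nabla^3 V(0)_{\rho\sigma\tau}\,\rho\otimes\sigma\otimes\tau,
\]
which is fully symmetric simply because $W$ is a smooth scalar function on $\R^2$ (equality of mixed partials), so $\sym\nabla^3 W(0) = \nabla^3 W(0)$.

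The crux is to verify the two remaining symmetries so that \Cref{tensors-lemma2} applies. For the rotational invariance I would argue exactly as in \Cref{D2W-cor}: differentiating the identity $V(A)=V((A_{Q_\La\rho})_{\rho\in\Rc})$ from \cref{eq:rotsym} three times gives $\nabla^3 V(0)_{\rho\sigma\tau}=\nabla^3 V(0)_{Q_\La\rho,Q_\La\sigma,Q_\La\tau}$; since $Q_\La\Rc=\Rc$, relabelling the summation indices $\rho\mapsto Q_\La\rho$ (and likewise $\sigma,\tau$) in the displayed formula and using this relation yields $\nabla^3 W(0) = Q^{\otimes 3}\nabla^3 W(0)$. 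For the reflection, the key point --- and the step I expect to require the most care --- is the sign: differentiating $V(A)=V((-A_{S\rho})_{\rho\in\Rc})$ from \cref{eq:linereflectionsym} produces one factor of $-1$ per derivative, so at third order $\nabla^3 V(0)_{\rho\sigma\tau} = -\nabla^3 V(0)_{S\rho,S\sigma,S\tau}$ (using $S^{-1}=S$). The same relabelling argument, now with $S\Rc=\Rc$, then gives $S^{\otimes 3}\nabla^3 W(0) = -\nabla^3 W(0)$. It is precisely this minus sign, absent at even order, that selects the single basis vector in \Cref{tensors-lemma2} rather than the two-dimensional space of \Cref{tensors-lemma}(b). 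With all three properties in hand, \Cref{tensors-lemma2} forces $\nabla^3 W(0) = c_{\rm quad}(E_{111}-3\sym E_{122})$ for some $c_{\rm quad}\in\R$.

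Finally I would establish the divergence identity by a direct computation. Reading off the nonzero entries of $E_{111}-3\sym E_{122}$, namely the $(1,1,1)$ entry equal to $1$ and the three permutations of $(1,2,2)$ each equal to $-1$, I would form the vector field $\nabla^3 W(0)[\nabla u,\nabla v]$ componentwise, obtaining first component $c_{\rm quad}(\partial_1 u\,\partial_1 v - \partial_2 u\,\partial_2 v)$ and second component $-c_{\rm quad}(\partial_1 u\,\partial_2 v + \partial_2 u\,\partial_1 v)$. Applying $\divo$ via the product rule and grouping the terms according to which factor carries two derivatives then reproduces the stated right-hand side. This last step is entirely mechanical; no obstacle is expected beyond careful bookkeeping of the symmetric roles of $u$ and $v$.
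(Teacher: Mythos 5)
Your proposal is correct and follows essentially the same route as the paper: the representation $\nabla^3W(0)=\frac{1}{\det A_\La}\sum_{\rho,\sigma,\tau}\nabla^3V(0)_{\rho\sigma\tau}\,\rho\otimes\sigma\otimes\tau$, rotational invariance from \cref{eq:rotsym} by relabelling with $Q_\La\Rc=\Rc$, the sign $(-1)^3$ from \cref{eq:linereflectionsym} giving $S^{\otimes 3}\nabla^3W(0)=-\nabla^3W(0)$, reduction to the one-dimensional span via \Cref{tensors-lemma2}, and the mechanical divergence identity, all of which check out. The only difference is cosmetic: the paper additionally extracts the explicit value of $c_{\rm quad}$ from the projector formula of \Cref{tensors-lemma}(b), whereas you deduce only its existence, which is all the statement requires.
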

\begin{proof}
As $W(F) = \frac{1}{\det A_{\La}}V(F\cdot\Rc)$, we have
\[
\nabla^3W(0) = \frac{1}{\det A_{\La}}\sum_{\rho,\sigma, \tau\in\Rc} \nabla^3V(0)_{\rho\sigma \tau}\rho\otimes\sigma \otimes \tau.
\]
Further, due to the rotational symmetry of $\Rc$ and $V$, \cref{eq:rotsym}, we have $\nabla^3V(0)_{\rho \sigma \tau}=\nabla^3V(0)_{Q\rho Q\sigma Q\tau}$, hence we can equivalently write
\[
 \nabla^3W(0) = \frac{1}{\det A_{\La}}\sum_{\rho,\sigma, \tau \in\Rc} \nabla^3V(0)_{\rho \sigma \tau}Q\rho\otimes Q\sigma \otimes Q\tau.
\]
Furthermore, the line reflection symmetry \cref{eq:linereflectionsym} implies $\nabla^3V(0)_{\rho \sigma \tau}= - \nabla^3 V(0)_{S\rho S\sigma S\tau}$, which translates to
\[
 \nabla^3 W(0) = - \frac{1}{\det A_{\La}}\sum_{\rho,\sigma, \tau \in\Rc} \nabla^3 V(0)_{\rho \sigma \tau}S\rho\otimes S\sigma \otimes S\tau.
\]
Combining these observations, we find
\[
 \nabla^3W(0) \in \big\{A \colon A= \sym A, Q^{\otimes 3}A = A, S^{\otimes 3}A = -A \big\},
\]
and invoking \Cref{tensors-lemma}, we therefore deduce that
\begin{align*}
 \nabla^3W(0) &= \Big(\frac{1}{4 \det A_{\La}}\sum_{\rho,\sigma, \tau\in\Rc} \nabla^3V(0)_{\rho \sigma \tau} (\rho_1 \sigma_1 \tau_1 - \rho_1 \sigma_2 \tau_2 - \rho_2 \sigma_1 \tau_2 - \rho_2 \sigma_2 \tau_1 )\Big)\\
 &\quad \quad \cdot(E_{111} - 3 \sym E_{122})\\
 &=: c_{\rm quad} (E_{111} - 3 \sym E_{122}).
\end{align*}
Finally, the identity
\[\divo ((E_{111} - 3 \sym E_{122})[\nabla u, \nabla v]) = \begin{pmatrix}
\partial_{11} v - \partial_{22} v \\ -2 \partial_{12} v
\end{pmatrix} \cdot \nabla u  +  \begin{pmatrix}
\partial_{11} u - \partial_{22} u \\ -2 \partial_{12} u
\end{pmatrix}\cdot\nabla v.\]
completes the proof.
\end{proof}

\subsection{Decay of the linear residual}
\label{sec:decay_ref}
As discussed in the sketch of the proof, see \cref{eq:linearresidual}, the crucial object is the \emph{linear residual}
\begin{equation*}
f_u = - \divRc(\nabla^2V(0)[\DRc u]).
\end{equation*}
We now establish how crystalline symmetries lead to a faster decay of $f_u$ as
would be expected from linearised elasticity in general.

\begin{theorem} \label{decaylinearresidual}
\begin{enumerate}[label=(\alph*)]
\item In the setting of \Cref{thm:highsym}, on a square lattice, we have
\[\lvert f_{\bar{u}}(x) \rvert \lesssim \lvert x \rvert^{-4}\]
for sufficiently large $\lvert x \rvert$.
\item In the setting of \Cref{thm:highsym} on a triangular lattice, we have
\begin{align*}
\lvert f_{\bar{u}}(x) \rvert &\lesssim \lvert x \rvert^{-6} \log^2 \lvert x \rvert + \lvert x \rvert^{-3} \lvert \DRc \bar{u} \rvert + \lvert x \rvert^{-2} \lvert \DRc^2 \bar{u} \rvert\\
&\lesssim \lvert x \rvert^{-5} \log \lvert x \rvert
\end{align*}
for sufficiently large $\lvert x \rvert$.
\item In the setting of \Cref{BCC-result}, we have
\[\lvert f_{\bar{u}}(x) \rvert \lesssim \lvert x \rvert^{-3}\]
for sufficiently large $\lvert x \rvert$. But, writing $\bar{u}=u_1 + u_2 + \bar{u}_{\rm rem}$ with $u_1$ and $u_2$ given by \cref{u1form} and \cref{u2form}, we have
\begin{align*}
\lvert f_{\bar{u}_{\rm rem}}(x) \rvert &\lesssim \lvert \DRc^2 \bar{u}_{\rm rem} \rvert \lvert \DRc \bar{u}_{\rm rem} \rvert +  \lvert x \rvert^{-2} \lvert \DRc \bar{u}_{\rm rem} \rvert +\lvert x \rvert^{-1} \lvert \DRc^2 \bar{u}_{\rm rem} \rvert + \lvert x \rvert^{-5}\\
&\lesssim \lvert x \rvert^{-4} \log \lvert x \rvert.
\end{align*}
for sufficiently large $\lvert x \rvert$.
\end{enumerate}
\end{theorem}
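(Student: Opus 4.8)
The plan is to treat all three parts through one mechanism: the criticality condition together with a Taylor expansion of $\nabla V$ at the origin. Since $\bar{u}$ is a critical point of \cref{energy}, the atomistic equilibrium $\divRc \nabla V(\DRc\hat{u} + \DRc\bar{u}) = 0$ holds; expanding $\nabla V$ about $0$ and using that $\divRc$ annihilates the constant field $\nabla V(0)$, I would isolate
\[
f_{\bar{u}} = \divRc\big(\nabla^2V(0)[\DRc\hat{u}]\big) + \tfrac{1}{2}\divRc\big(\nabla^3V(0)[A,A]\big) + \tfrac{1}{6}\divRc\big(\nabla^4V(0)[A,A,A]\big) + \text{h.o.t.},
\]
with $A = \DRc(\hat{u}+\bar{u})$. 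In the mirror-symmetric setting \cref{eq:mirrorsymV} the potential is even, so $\nabla^3V(0)=0$ and the quadratic term drops out (parts (a),(b)); in the BCC setting it survives. For part (c) I would instead rewrite the equilibrium as $\divRc\nabla V(\DRc\tilde{u} + \DRc\bar{u}_{\rm rem})=0$ with the improved predictor $\tilde{u}=\hat{u}+u_1+u_2$, yielding the analogous identity for $f_{\bar{u}_{\rm rem}}$.

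The core of the argument is to bound the pure-predictor contributions by passing to the continuum as in \cref{eq:formalforceexpansion} and exploiting symmetry. For the linear anti-discretisation error $\divRc(\nabla^2V(0)[\DRc\hat{u}])$, since $\Rc=-\Rc$ in both mirror cases only even-order corrections survive, with leading continuum term $-c_{\rm lin}\Delta\hat{u}=0$. On the square lattice the next ($\nabla^4\hat{u}$) correction is genuinely anisotropic, giving the sharp $O(\lvert x\rvert^{-4})$ of part (a); on the triangular lattice \Cref{tensors-lemma}(c) forces this tensor to be isotropic, so the correction is $\propto\Delta^2\hat{u}=0$ and one descends to $O(\lvert x\rvert^{-6})$. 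The cubic term $\tfrac16\divRc(\nabla^4V(0)[\DRc\hat{u},\DRc\hat{u},\DRc\hat{u}])$ is treated identically: its continuum limit has the form $c_2(g(x)\Delta\hat{u}+H(\hat{u}))$, where $H$ is the mean curvature of the helicoidal graph of $\hat{u}$, and both $\Delta\hat{u}=0$ and, since the helicoid is a minimal surface, $H(\hat{u})=0$. For part (c), where $\nabla^3V(0)\neq0$, \Cref{D2W-cor,D3W-cor} reduce the pure-$\tilde{u}$ terms to $-c\,\divo\nabla W(\nabla\tilde{u})$ up to anti-discretisation error, and the PDEs \cref{eqcorr1a,eqcorr2a} defining $u_1,u_2$ are exactly the conditions that annihilate the $O(\lvert x\rvert^{-3})$ and $O(\lvert x\rvert^{-4})$ groups of this Cauchy--Born force, leaving it at $O(\lvert x\rvert^{-5})$ (the leading $O(\lvert x\rvert^{-3})$ term alone, before correction, giving the first assertion of (c)).

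It remains to control the mixed terms, namely the parts of the quadratic and cubic contributions carrying at least one factor $\DRc\bar{u}$ (resp.\ $\DRc\bar{u}_{\rm rem}$). Here I would apply a discrete product rule to $\divRc$: writing such a term as $\divRc(h_\rho\,D_\rho\bar{u})$ with $h_\rho$ a smooth coefficient built from $\nabla\hat{u}$, the difference splits into one piece where $h_\rho$ is differenced (gaining $\lvert x\rvert^{-1}$, leaving $\lvert\DRc\bar{u}\rvert$) and one where $\bar{u}$ is differenced twice (leaving $\lvert\DRc^2\bar{u}\rvert$). Since $\lvert\nabla^j\hat{u}\rvert\lesssim\lvert x\rvert^{-j}$, the coefficient is $(\nabla\hat{u})^2\sim\lvert x\rvert^{-2}$ in the mirror (cubic) case and $\nabla\hat{u}\sim\lvert x\rvert^{-1}$ in the BCC (quadratic) case, producing precisely the structural bounds $\lvert x\rvert^{-3}\lvert\DRc\bar{u}\rvert+\lvert x\rvert^{-2}\lvert\DRc^2\bar{u}\rvert$ in (b) and $\lvert\DRc^2\bar{u}_{\rm rem}\rvert\lvert\DRc\bar{u}_{\rm rem}\rvert+\lvert x\rvert^{-2}\lvert\DRc\bar{u}_{\rm rem}\rvert+\lvert x\rvert^{-1}\lvert\DRc^2\bar{u}_{\rm rem}\rvert$ in (c). Substituting the a priori rates $\lvert\DRc\bar{u}\rvert\lesssim\lvert x\rvert^{-2}\log\lvert x\rvert$ and $\lvert\DRc^2\bar{u}\rvert\lesssim\lvert x\rvert^{-3}\log\lvert x\rvert$ from \Cref{EOSdecay} then collapses the first line of each part to the second, giving $\lvert x\rvert^{-5}\log\lvert x\rvert$ in (b) and $\lvert x\rvert^{-4}\log\lvert x\rvert$ in (c); on the square lattice the same mixed terms are $O(\lvert x\rvert^{-5}\log\lvert x\rvert)$, hence negligible against the $O(\lvert x\rvert^{-4})$ predictor part.

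The main obstacle I anticipate is making the discrete-to-continuum expansions rigorous to the required order while tracking the cancellations exactly. In particular, the vanishing of the cubic term rests on a \emph{discrete} analogue of the minimal-surface identity $H(\hat{u})=0$, which holds only up to a Taylor remainder that must be shown to enter at the claimed order; likewise, certifying that the anisotropic $\nabla^4$ correction survives on the square lattice but the isotropic one cancels on the triangular lattice requires the exact tensor identities of \Cref{tensors-lemma} combined with careful control of Taylor remainders of $\hat{u}$, whose derivatives blow up near the core $\hat{x}$. These remainder estimates, rather than the algebraic cancellations, are where the genuine technical effort lies, and the logarithmic factors in the final bounds originate here and in the a priori estimates of \Cref{EOSdecay}.
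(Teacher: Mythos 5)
Your strategy is, in substance, the paper's own proof: expand $\nabla V$ at $0$ in the equilibrium equation $\divRc \nabla V(\DRc\hat{u}+\DRc\bar{u})=0$, Taylor-expand the lattice differences of the predictor, and eliminate the leading terms by exactly the cancellations you identify --- $\Delta\hat{u}=0$ for the continuum limit of the linear term; isotropy of the $Q$-invariant symmetric four-tensor (\Cref{tensors-lemma}) forcing the fourth-order anti-discretisation term to be proportional to $\Delta^2\hat{u}=0$ on the triangular lattice while remaining anisotropic on $\Z^2$; the minimal-surface identity $H(\hat{u})=0$ for the $\nabla^4V(0)$ contribution; and, in case (c), the defining PDEs \cref{eqcorr1a,eqcorr2a}, which in the paper produce precisely $J_2+I_3=O(\lvert x\rvert^{-5})$. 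The mixed terms are handled in the paper, as by you, through a discrete product rule and the a priori rates of \Cref{EOSdecay}, yielding the identical structural bounds in (b) and (c).

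There is, however, one concrete step your outline would trip over. The Taylor expansion $D_\rho\hat{u}(x)\approx\nabla\hat{u}(x)\cdot\rho+\cdots$, on which every one of your discrete-to-continuum reductions rests, is \emph{false} near the branch cut $\Gamma$, where $D_\rho\hat{u}$ jumps by $O(b)$; since $\Gamma$ is a half-line extending to infinity, this affects arbitrarily large $\lvert x\rvert$, i.e.\ exactly the regime of the theorem. Your closing paragraph instead locates the remainder difficulty ``near the core $\hat{x}$'', which is immaterial for asymptotics as $\lvert x\rvert\to\infty$. The paper's repair is the slip periodicity $V(A)=V(A+p(\delta_{\rho\sigma})_{\sigma\in\Rc})$: one replaces $\hat{u}$ by $\hat{u}\pm p$ on one side of the cut before expanding, and this substitution must be invoked (at least implicitly) in all of your expansions; without it the claimed decay of $f_{\bar{u}}$ fails on a neighbourhood of $\Gamma$. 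A smaller inaccuracy: you attribute the vanishing of the odd-order anti-discretisation terms to $\Rc=-\Rc$, but the assumptions of \Cref{model} do not grant $\Rc=-\Rc$ on the triangular lattice (no power of the $2\pi/3$ rotation equals $-\Id$); the paper instead obtains $J_3=J_5=0$ from the symmetry of $\nabla^2V(0)_{\rho\sigma}$ under interchanging $\rho$ and $\sigma$, needing no assumption on $\Rc$ at all, and the direction-reversal identity used for $\nabla^4V(0)$ rests on the point symmetry $V(A)=V\bigl((-A_{-\rho})_{\rho\in\Rc}\bigr)$ of anti-plane potentials derived from three-dimensional models (cf.\ \Cref{BCC}), not on a symmetry of the direction set alone.
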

\begin{remark}
\Cref{decaylinearresidual} improves on the residual decay estimate $\lvert x \rvert^{-3}$ obtained in \cite{EOS2016} in all three cases we consider. This can be used to gain better estimates on $\bar{u}$ or $\bar{u}_{\rm rem}$ which in turn improves the rates here. Iteratively, we will see that the terms involving $\bar{u}$ or $\bar{u}_{\rm rem}$ in all of the above estimates turn out to be negligible.
\end{remark}
\begin{proof}
Recall that $\bar{u}$ is a critical point of the energy difference, satisfying the equilibrium equation
\begin{equation} \label{eq:forceequilibrium}
- \divRc(\nabla V(\DRc \hat{u}+\DRc \bar{u}))=0.
\end{equation}
To obtain an estimate on $f_{\bar{u}}(x)$ we first linearise by Taylor expansion of $V$ around $0$ and then connect to CLE by Taylor expansion of $\DRc \hat{u}$ around $x$. Note that $\hat{u}$ is not smooth at the branch cut $\Gamma$ and $D_\rho \hat{u}$ is not close to $\nabla \hat{u} \cdot \rho$ there either. But this is not a problem as the jump of $\hat{u}$ is equal to the periodicity $p$ (or $-p$) of $V$ and $\nabla \hat{u} \in C^\infty (\R^2\backslash \{0\})$.
Therefore, one can always substitute $\hat{u}(x)$ by $\hat{u}(x) \pm p$ where necessary. We will use this implicitly in the following arguments.

Taylor expanding $V$ around $0$ and ordering by order of decay gives
\[0 = f_{\bar{u}} + I_2 + I_3 + I_4 + I_5 + I_{\rm rem}\]
where
\begin{align*}
I_2 &= - \divRc(\nabla^2V(0)[\DRc \hat{u}]),\\
I_3 &= - \frac{1}{2}\divRc(\nabla^3V(0)[\DRc \hat{u},\DRc \hat{u}]),\\
I_4 &= - \divRc(\nabla^3V(0)[\DRc \hat{u},\DRc \bar{u}]) - \frac{1}{6}\divRc(\nabla^4V(0)[\DRc \hat{u},\DRc \hat{u},\DRc \hat{u}]),\\
I_5 &= - \frac{1}{2}\divRc(\nabla^3V(0)[\DRc \bar{u},\DRc \bar{u}]) - \frac{1}{2}\divRc(\nabla^4V(0)[\DRc \hat{u},\DRc \hat{u},\DRc \bar{u}])\\
&\qquad  - \frac{1}{24} \divRc(\nabla^5V(0)[\DRc \hat{u}]^4),
\end{align*}
and the remainder satisfies
\begin{equation} \label{eq:Irem}
 I_{\rm rem} \rvert \leq \rvert x \lvert^{-6} \log^2 \lvert x \rvert
\end{equation}
due to the already known decay estimates on $\bar{u}$ from \Cref{EOSdecay} and the explicit rates for $\hat{u}$:
\[
  \lvert \DRc^j \hat{u} \rvert \leq \lvert x \rvert^{-j}
  \quad \text{and} \quad
  \lvert \DRc^j \bar{u} \rvert \leq \lvert x \rvert^{-j-1} \log \lvert x \rvert
  \qquad \text{for } j \geq 1.
\]

{\it Estimate for $I_2$: } The term $I_2$  depends only on $\hat{u}$. We can expand $\hat{u}$
\begin{align*}
I_2 &= \sum_{\rho, \sigma \in \Rc} \nabla^2V(0)_{\rho \sigma} D_{-\sigma} D_{\rho} \hat{u}(x)\\
&= \sum_{\rho, \sigma \in \Rc} \nabla^2V(0)_{\rho \sigma}\big(\hat{u}(x+\rho-\sigma)+  \hat{u}(x)-\hat{u}(x + \rho)-\hat{u}(x-\sigma)\big)\\
&= J_2 + J_3 +J_4 + J_5 + O(\lvert x \rvert^{-6}),
\end{align*}
where
\begin{align*}
J_2 &= -\sum_{\rho, \sigma \in \Rc} \nabla^2V(0)_{\rho \sigma} \nabla^2\hat{u}(x)[\rho,\sigma]\\
J_3 &= \frac{1}{2} \sum_{\rho, \sigma \in \Rc} \nabla^2V(0)_{\rho \sigma} \nabla^3\hat{u}(x)\big([\rho,\sigma,\sigma]-[\rho,\rho,\sigma] \big) \\
J_4 &= \frac{1}{12} \sum_{\rho, \sigma \in \Rc} \nabla^2V(0)_{\rho \sigma} \nabla^4\hat{u}(x)\big(-2[\rho,\sigma,\sigma,\sigma]+3[\rho,\rho,\sigma,\sigma]-2[\rho,\rho,\rho,\sigma] \big)\\
J_5 &= \frac{1}{24} \sum_{\rho, \sigma \in \Rc} \nabla^2V(0)_{\rho \sigma} \nabla^5\hat{u}(x)\big([\rho,\sigma,\sigma,\sigma,\sigma]-2[\rho,\rho,\sigma,\sigma,\sigma]\\
&\qquad \qquad \qquad + 2[\rho,\rho,\rho,\sigma,\sigma] - [\rho,\rho,\rho,\rho,\sigma]\big)\\
\end{align*}
Using the symmetry in $\rho$ and $\sigma$ it follows that $J_3=J_5=0$. By \Cref{tensors-lemma},
\begin{align}
J_2 &= -\sum_{\rho, \sigma \in \Rc} \nabla^2V(0)_{\rho \sigma} \nabla^2\hat{u}(x)[\rho,\sigma] \label{eq:J2}\\
&= \big(-\sum_{\rho, \sigma \in \Rc} \nabla^2V(0)_{\rho \sigma} \rho \otimes \sigma \big) \colon \nabla^2\hat{u}(x) \nonumber  \\
&= \big(-\frac{1}{2}\sum_{\rho, \sigma \in \Rc} \nabla^2V(0)_{\rho \sigma} \rho \cdot \sigma \big) \Delta \hat{u}(x) \nonumber.
\end{align}
Hence, $J_2=0$. Thus we conclude so far that $I_2 = J_4 + O(\lvert x \rvert^{-6})$.  To proceed, we now distinguish the specific cases we consider.

{\it Proof of (a): } Due to mirror reflection symmetry we have $\nabla^3V(0)=0$ and $\nabla^5V(0)=0$, hence $I_3=0$, $\lvert I_4 \rvert \lesssim \lvert x \rvert^{-4}$ and $|I_2| \lesssim |J_4| + \lvert x \rvert^{-6} \lesssim \lvert x \rvert^{-4}$. We therefore obtain $\lvert f_{\bar{u}} \rvert \lesssim \lvert x \rvert^{-4}$ which concludes the proof of (a).

{\it Estimates for $J_4, I_4$ for cases (b, c): }
We use \Cref{tensors-lemma} to calculate
\begin{align}
J_4 &= \frac{1}{12} \sum_{\rho, \sigma \in \Rc} \nabla^2V(0)_{\rho \sigma} \nabla^4\hat{u}(x)\big(-2[\rho,\sigma,\sigma,\sigma]+3[\rho,\rho,\sigma,\sigma]-2[\rho,\rho,\rho,\sigma] \big) \label{eq:J4} \\
&= P_{\sym}\Big(\frac{1}{12} \sum_{\rho, \sigma \in \Rc} \nabla^2V(0)_{\rho \sigma} \big(-2\rho\otimes\sigma\otimes\sigma\otimes\sigma+3\rho\otimes\rho\otimes\sigma\otimes\sigma \nonumber\\
&\qquad\qquad-2\rho\otimes\rho\otimes\rho\otimes\sigma \big) \Big) \colon \nabla^4\hat{u}(x) \nonumber\\
&= \Big( \frac{1}{32} \sum_{\rho, \sigma \in \Rc} \nabla^2V(0)_{\rho \sigma} \big( 2 (\rho \cdot \sigma)^2+ \lvert \rho \rvert^2 \lvert \sigma \rvert^2 -2\rho\cdot\sigma (\lvert \rho \rvert^2 + \lvert \sigma \rvert^2 \big)\Big) \Delta^2\hat{u}(x) \nonumber
\end{align}
As $\Delta^2 \hat{u}=0$, we find $J_4=0$ and hence obtain $\lvert I_2 \rvert \lesssim \lvert x \rvert^{-6}$.

Next, we consider
\begin{align*}
I_4 &= - \frac{1}{6}\divRc(\nabla^4V(0)[\DRc \hat{u},\DRc \hat{u},\DRc \hat{u}])\\
&= \frac{1}{6} \sum_{\pi,\rho,\sigma,\tau} \nabla^4V(0)_{\pi\rho\sigma\tau} D_{-\tau}( D_{\pi} \hat{u} D_{\rho} \hat{u}  D_{\sigma} \hat{u})\\
&= - \frac{1}{2} \sum_{\pi,\rho,\sigma,\tau} \nabla^4V(0)_{\pi\rho\sigma\tau} \nabla^2 \hat{u}[\pi, \tau] \nabla \hat{u}\cdot \rho  \nabla \hat{u}\cdot \sigma\\
&\qquad - \frac{1}{6} \sum_{\pi,\rho,\sigma,\tau} \nabla^4V(0)_{\pi\rho\sigma\tau} \nabla^3 \hat{u}[\pi, \pi, \tau] \nabla \hat{u}\cdot \rho  \nabla \hat{u}\cdot \sigma\\
&\qquad + \frac{1}{6} \sum_{\pi,\rho,\sigma,\tau} \nabla^4V(0)_{\pi\rho\sigma\tau} \nabla^3 \hat{u}[\pi, \tau, \tau] \nabla \hat{u}\cdot \rho  \nabla \hat{u}\cdot \sigma\\
&\qquad - \frac{1}{2} \sum_{\pi,\rho,\sigma,\tau} \nabla^4V(0)_{\pi\rho\sigma\tau} \nabla^2 \hat{u}[\pi, \tau] \nabla^2 \hat{u}[\rho, \rho]  \nabla \hat{u}\cdot \sigma\\
&\qquad + \frac{1}{2} \sum_{\pi,\rho,\sigma,\tau} \nabla^4V(0)_{\pi\rho\sigma\tau} \nabla^2 \hat{u}[\pi, \tau] \nabla^2 \hat{u}[\rho, \tau]  \nabla \hat{u}\cdot \sigma.
\end{align*}
The second and third terms cancel each other by symmetry in $\pi$ and $\tau$, while the fourth and fifth terms both vanish due to $\nabla^4V(0)_{\pi\rho\sigma\tau} = \nabla^4V(0)_{(-\pi)(-\rho)(-\sigma)(-\tau) }$.

Applying again \Cref{tensors-lemma} we can express the first term as
\begin{align}
I_4 &= - \frac{1}{2} \sum_{\pi,\rho,\sigma,\tau} \nabla^4V(0)_{\pi\rho\sigma\tau} \nabla^2 \hat{u}[\pi, \tau] \nabla \hat{u}\cdot \rho  \nabla \hat{u}\cdot \sigma \label{eq:I4}\\
&= P_{\sym}\Big(- \frac{1}{2} \sum_{\pi,\rho,\sigma,\tau} \nabla^4V(0)_{\pi\rho\sigma\tau} \pi \otimes \tau \otimes \rho \otimes \sigma \Big) \colon ( \nabla^2 \hat{u}\otimes \nabla \hat{u}\otimes  \nabla \hat{u})\nonumber\\
&= \frac{1}{24} \Big(- \frac{1}{2} \sum_{\pi,\rho,\sigma,\tau} \nabla^4V(0)_{\pi\rho\sigma\tau} ((\pi\cdot \tau)(\rho \cdot \sigma)+(\pi\cdot \rho)(\tau \cdot \sigma)+(\pi\cdot \sigma)(\rho \cdot \tau)) \Big) \nonumber\\
&\qquad\cdot( 3\partial_1^2 \hat{u} (\partial_1 \hat{u})^2+3\partial_2^2 \hat{u} (\partial_2 \hat{u})^2 +\partial_1^2 \hat{u} (\partial_2 \hat{u})^2+\partial_2^2 \hat{u} (\partial_1 \hat{u})^2 + 4 \partial_1\partial_2 \hat{u} \partial_1 \hat{u} \partial_2 \hat{u}) \nonumber\\
&= c (\lvert \nabla \hat{u} \rvert^2 \Delta \hat{u} + 2  \nabla^2 \hat{u} [\nabla \hat{u}, \nabla \hat{u}]) \nonumber\\
&= c\big(3\lvert \nabla \hat{u}(x)\rvert^2+2) \Delta \hat{u}(x)-4(1+ \lvert \nabla \hat{u}\rvert^2\big)^{\frac{3}{2}}H \nonumber,
\end{align}
where
\[H=\frac{(1+ (\partial_1 \hat{u})^2)\partial_2^2 \hat{u} + (1+ (\partial_2 \hat{u})^2)\partial_1^2 \hat{u} - 2 \partial_1 \hat{u} \partial_2 \hat{u} \partial_1 \partial_2 \hat{u}}{2(1+ \lvert \nabla \hat{u}\rvert^2)^{\frac{3}{2}}}\]
is the mean curvature of the surface given by $x_3 = \hat{u}(x_1,x_2)$. Since the graph of $\hat{u}$ is a helicoid, i.e., a minimal surface, $H \equiv 0$ and therefore we
 have shown that $I_4=0$.

{\it Proof of (b): }  Due to the mirror symmetry we again obtain
$\nabla^3V(0)=\nabla^5V(0)=0$, hence $I_3=0$. In addition, again due to mirror
symmetry, $I_5$ simplifies to
\[I_5 = - \frac{1}{2}\divRc(\nabla^4V(0)[\DRc \hat{u},\DRc \hat{u},\DRc \bar{u}]).\]
Therefore,
\begin{align*}
\lvert I_5 \rvert &\lesssim \lvert x \rvert^{-3} \lvert \DRc \bar{u} \rvert + \lvert x \rvert^{-2} \lvert \DRc^2 \bar{u} \rvert\\
&\lesssim \lvert x \rvert^{-5} \log \lvert x \rvert,
\end{align*}
Invoking $I_4 = 0$ and $|I_2| \lesssim |x|^{-6}$ from the previous step concludes
the proof of (b).

{\it Proof of (c): } On the BCC lattice, case (c), one typically finds $\nabla^3 V(0)\neq 0$ and $\nabla^5 V(0)\neq 0$. In particular, $I_3$ does not vanish, hence our arguments so far only yield $\lvert f_{\bar{u}} \rvert \leq \lvert x \rvert^{-3}$.

To estimate, $f_{\bar{u}_{\rm rem}}$ we replace $\hat{u}$ with $\hat{u} + u_1 + u_2$ and $\bar{u}$ with $\bar{u}_{\rm rem}$ in the previous steps of the proof. Recall from \cref{eq:u1u2estimates} that  $\lvert \nabla^j u_i \rvert \lesssim \lvert x \rvert^{-i-j}$.

Clearly, \Cref{eq:forceequilibrium} and the Taylor expansion of $V$ including \cref{eq:Irem} still hold. For the estimates let us start with the higher order terms.  We can estimate directly
\[\big\lvert - \divRc(\nabla^3V(0)[\DRc (\hat{u}+u_1 + u_2),\DRc \bar{u}_{\rm rem}]) \big\rvert \leq  \lvert x \rvert^{-2} \lvert \DRc \bar{u}_{\rm rem} \rvert +\lvert x \rvert^{-1} \lvert \DRc^2 \bar{u}_{\rm rem} \rvert.\]
If we also substitute $\hat{u}$ by $\hat{u} + u_1 + u_2$ in \cref{eq:I4}, we find overall that
\begin{align*}
\lvert I_4 \rvert &\lesssim \lvert x \rvert^{-2} \lvert \DRc \bar{u}_{\rm rem} \rvert +\lvert x \rvert^{-1} \lvert \DRc^2 \bar{u}_{\rm rem} \rvert + \lvert x \rvert^{-5}\\
&\lesssim \lvert x \rvert^{-4} \log \lvert x \rvert.
\end{align*}
In the same spirit we estimate
\begin{align*}
\lvert I_5 \rvert &\lesssim \lvert \DRc^2 \bar{u}_{\rm rem} \rvert \lvert \DRc \bar{u}_{\rm rem} \rvert + \lvert x \rvert^{-3} \lvert \DRc \bar{u}_{\rm rem} \rvert +\lvert x \rvert^{-2} \lvert \DRc^2 \bar{u}_{\rm rem} \rvert + \lvert x \rvert^{-5}\\
&\lesssim \lvert x \rvert^{-5} \log^2 \lvert x \rvert.
\end{align*}
The important difference to before are found in $I_2$ and $I_3$. Let us start with $I_2$. As before, we find $J_3 = J_5 =0$. Substituting $\hat{u}$ by $\hat{u}+ u_1 +u_2$ in \cref{eq:J4}, we estimate
\[\lvert J_4 \rvert \lesssim \lvert \Delta^2 (\hat{u}+u_1 + u_2) \rvert = \lvert \Delta^2 (u_1 + u_2) \rvert \lesssim \lvert x \rvert^{-5}.\]
Therefore, $I_2 = J_2 + O(\lvert x \rvert^{-5})$. It is crucial that now $J_2$ does not vanish to be able to cancel out the first terms in the nonlinearity $I_3$. Following \cref{eq:J2}, we have
\begin{align*}
J_2 &= -\sum_{\rho, \sigma \in \Rc} \nabla^2V(0)_{\rho \sigma} \nabla^2(u_1 + u_2)(x)[\rho,\sigma]\\
&= - \det(A_\Lambda) \divo \big( \nabla^2 W(0)[\nabla (u_1 + u_2)] \big)\\
&= - \det(A_\Lambda) c_{\rm lin} \Delta (u_1 + u_2)
\end{align*}

Now let us come to $I_3$. Clearly,
\begin{align*}
I_3 &= - \frac{1}{2}\divRc(\nabla^3V(0)[\DRc \hat{u},\DRc \hat{u}])\\
&\qquad - \divRc(\nabla^3V(0)[\DRc \hat{u},\DRc u_1]) + O(\lvert x \rvert^{-5}).
\end{align*}
Developing the discrete differences as we did previously for $I_2$, we find
\begin{align*}
- &\divRc(\nabla^3V(0)[\DRc \hat{u},\DRc u_1]) \\
&=  \sum_{\rho, \sigma, \tau \in \Rc} \nabla^3V(0)_{\sigma \rho \tau} D_{-\tau}(D_\rho \hat{u}(x) D_\sigma u_1(x))\\
&= - \sum_{\rho, \sigma, \tau \in \Rc} \nabla^3V(0)_{\sigma \rho \tau} (\nabla \hat{u}(x) [\rho] \nabla^2 u_1(x)[\sigma, \tau] + \nabla u_1(x) [\sigma] \nabla^2 \hat{u}(x)[\rho, \tau])\\
&\qquad + O(\lvert x \rvert^{-5})\\
&= - \det A_\Lambda \divo (\nabla^3W(0)[\nabla \hat{u}, \nabla u_1]) + O(\lvert x \rvert^{-5}).
\end{align*}
For the other term we have to take a few more terms into account. For those we again use the fact that $\nabla^3V(0)_{\sigma \rho \tau} = - \nabla^3V(0)_{(-\sigma) (-\rho) (-\tau)}$.
\begin{align*}
- \frac{1}{2}\divRc&(\nabla^3 V(0)[\DRc \hat{u},\DRc \hat{u}]) \\
&=  \frac{1}{2}\sum_{\rho, \sigma, \tau \in \Rc} \nabla^3 V(0)_{\sigma \rho \tau} D_{-\tau}(D_\rho \hat{u}(x) D_\sigma \hat{u}(x))\\
&=  \frac{1}{2}\sum_{\rho, \sigma, \tau \in \Rc} \nabla^3 V(0)_{\sigma \rho \tau} \big( D_{-\tau}D_\rho \hat{u}(x) D_\sigma \hat{u}(x) + D_\rho \hat{u}(x -\tau) D_{-\tau}D_\sigma \hat{u}(x)  )\\
&= \frac{1}{2}\sum_{\rho, \sigma, \tau \in \Rc} \nabla^3 V(0)_{\sigma \rho \tau} \Big( -2 \nabla \hat{u}(x) [\sigma] \nabla^2 \hat{u}(x)[\rho, \tau]\\
&\qquad + \big(\nabla^3 \hat{u}(x)[\rho, \tau, \tau] - \nabla^3 \hat{u}(x)[\rho, \rho, \tau]\big)\nabla \hat{u}(x)[\sigma]  - \nabla^2 \hat{u}(x)[\rho, \tau] \nabla^2 \hat{u}(x)[\sigma, \sigma]\\
&\qquad + \nabla^2 \hat{u}(x)[\rho, \tau] \nabla^2 \hat{u}(x)[\sigma, \tau] \Big)+ O(\lvert x \rvert^{-5})\\
&= -\sum_{\rho, \sigma, \tau \in \Rc} \nabla^3 V(0)_{\sigma \rho \tau} \nabla \hat{u}(x) [\sigma] \nabla^2 \hat{u}(x)[\rho, \tau]+ O(\lvert x \rvert^{-5})\\
&= - \det A_\Lambda \frac{1}{2}\divo (\nabla^3 W(0)[\nabla \hat{u}, \nabla \hat{u}]) + O(\lvert x \rvert^{-5})
\end{align*}
Hence, we can use \Cref{eqcorr1a,eqcorr2a} for $u_1$ and $u_2$ to conclude that $J_2 + I_3 = O(\lvert x \rvert^{-5})$. This concludes the proof.
\end{proof}

\subsection{Proofs of the main theorems}

The connection between the decay of $f_u$ and the decay of $u$ is as follows:

\begin{theorem} \label{structuretheorem}
Let $u \in \Hcc$, and $j \in \{1,2\}$.
\begin{enumerate}[label=(\alph*)]
\item If $\lvert f_u(x) \rvert \lesssim \lvert x \rvert^{-3}$ and $\sum_x f_u =0$, then
for $\lvert x \rvert$ sufficiently large,
\[
\lvert D^j u(x) \rvert \lesssim \lvert x \rvert^{-1-j} \log \lvert x \rvert.
\]
\item If $\lvert f_u(x) \rvert \lesssim \lvert x \rvert^{-4}$,
$\sum_x f_u = 0$, and $\sum_x f_u x =0$, then for $\lvert x \rvert$ sufficiently large,
\[
\lvert D^j u(x) \rvert \lesssim \lvert x \rvert^{-2-j} \log \lvert x \rvert.
\]
\item If $\lvert f_u(x) \rvert \lesssim \lvert x \rvert^{-5}$,
$\sum_x f_u = 0$, $\sum_x f_u x =0$, and $\sum_x f_u x \otimes x \propto
\Id$, then
for $\lvert x \rvert$ sufficiently large,
\[
\lvert D^j u(x) \rvert \lesssim \lvert x \rvert^{-3-j} \log \lvert x \rvert.
\]
\item If the assumptions on the decay rate of $f_u$ in (a), (b), or (c) are slightly stronger, namely $\lvert x \rvert^{-3 - \eps}$, $\lvert x \rvert^{-4 - \eps}$, or $\lvert x \rvert^{-5 - \eps}$ for some $\eps >0$, then the resulting rates for $\DRc^j u$ are true without the logarithmic term, i.e. $\lvert x \rvert^{-1-j}$, $\lvert x \rvert^{-2-j}$, and $\lvert x \rvert^{-3-j}$, respectively.
\end{enumerate}
\end{theorem}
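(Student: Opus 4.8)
The plan is to represent $u$ through the lattice Green's function of the homogeneous operator $\Hhom$ and then extract the stated rates from the vanishing-moment hypotheses by a multipole (Taylor) expansion of the convolution kernel. First I would record the representation: since $f_u = \Hhom u$ by the definitions of the linear residual and of $\Hhom$, and since lattice stability \cref{eq:latticestability} makes $\Hhom$ coercive and hence boundedly invertible on $\Hcc$, one has $u = G \ast_\La f_u$ modulo an additive constant (irrelevant for $D^j u$ with $j\geq 1$), where $G$ solves $\Hhom G = \delta_0$. In each of the cases (a)--(c) the decay of $f_u$ makes it summable on $\La$ (in two dimensions $\sum_y |y|^{-s} < \infty$ for $s>2$, which is exactly the regime $s\in\{3,4,5\}$ here), so the convolution converges and $D^j u(x) = \sum_{y\in\La} D^j G(x-y)\, f_u(y)$; the neutrality condition $\sum_y f_u = 0$ is the compatibility condition consistent with $u \in \Hcc$.

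Next I would invoke the standard decay and continuum-approximation estimates for the lattice Green's function: for large $|x|$ one has $|D^j G(x)| \lesssim |x|^{-j}$ for $j\geq 1$, and $G$ agrees to leading order with the continuum fundamental solution $G_0(x) = -\tfrac{1}{2\pi c_{\rm lin}}\log|x|$ of $-c_{\rm lin}\Delta$ (recall $\nabla^2 W(0) = c_{\rm lin}\Id$ from \Cref{D2W-cor}), the corrections decaying strictly faster in every derivative; these are established in \cite{EOS2016} (see also \cite{ortnertheil13}). The crucial consequence is that the dominant part of $G$ is \emph{harmonic} away from the origin, so that $\Delta(D^j G)(x) = D^j(\Delta G)(x)$ decays strictly faster than $|x|^{-j-2}$.

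The heart of the argument is a split of the convolution into a far part $|y| > |x|/2$ and a near part $|y|\leq |x|/2$. On the far part $f_u$ is small and a direct annular summation using its decay yields a contribution within the target rate (up to the log). On the near part $|x-y|\gtrsim|x|$, so I would Taylor expand $y\mapsto D^j G(x-y)$ about $y=0$ to the order matched by the available moments,
\[
  D^j G(x-y) = D^j G(x) - y\cdot\nabla D^j G(x) + \tfrac12\, y\otimes y : \nabla^2 D^j G(x) - \cdots,
\]
and sum against $f_u$. The condition $\sum_y f_u = 0$ annihilates the constant term (case (a)); in case (b) the first-moment condition $\sum_y y\,f_u = 0$ additionally annihilates the linear term; in case (c) the quadratic term equals $\tfrac12\big(\sum_y y\otimes y\, f_u\big):\nabla^2 D^j G(x)$, and here the weakened hypothesis $\sum_y y\otimes y\, f_u \propto \Id$ is precisely what is needed, since contracting $\Id$ against $\nabla^2 D^j G$ produces $\Delta D^j G(x)$, negligible by the harmonicity noted above. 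In each case the leading surviving term is the $k$-th order Taylor remainder, bounded by $|y|^k |x|^{-j-k}$ with $k=1,2,3$; summing this against $f_u$ over the near field gives $|x|^{-j-k}\sum_{|y|\lesssim|x|} |y|^{k-s}$, which — because $s-k=2$ in all three cases — is logarithmically divergent and yields the rate $|x|^{-m-j}\log|x|$ with $m=1,2,3$. Strengthening the decay of $f_u$ to $|x|^{-s-\eps}$ makes this borderline sum convergent and removes the log, giving (d).

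The hard part will be the second-order cancellation in case (c): one must rigorously justify that $\sum_y y\otimes y\, f_u \propto \Id$, rather than full vanishing of the second moment, still produces the gain. This hinges on controlling the discreteness corrections and the lower-order terms in the continuum expansion of $G$ so that the leftover $\Delta(D^j G)$ genuinely decays faster than the target $|x|^{-j-3}$; equivalently, one exploits $\Hhom G(x)=0$ for $x\neq 0$ together with $\Hhom = -c_{\rm lin}\Delta + \text{(higher-order discrete corrections)}$. The matching of the logarithmic factors and the careful bookkeeping of the far-field annular sums are the remaining, more routine, technical points.
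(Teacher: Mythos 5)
Your proposal is correct and follows essentially the same route as the paper: the paper's own proof consists of the convolution representation $Du = f_u \ast_\Lambda DG$ via the lattice Green's function, citing \cite{EOS2016} for part (a) and deferring the details of (b)--(d) to the companion work \cite{MainPaper}. Your near/far splitting, the moment-by-moment annihilation of Taylor terms, the observation that $\sum_x f_u\, x\otimes x \propto \Id$ suffices because contraction with $\nabla^2 D^j G$ produces $\Delta D^j G$, which decays faster since the leading (continuum) part of $G$ is the fundamental solution of $-c_{\rm lin}\Delta$ (cf.\ \Cref{D2W-cor}), and the borderline sums with exponent gap $s-k=2$ producing the logarithm (removed under the $\eps$-strengthened decay) are precisely the mechanism the paper points to.
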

\begin{proof}
Statement (a) is part of the results in \cite{EOS2016}. Its extensions (b), (c), and (d) follow a similar basic strategy. The approach is based on knowledge about the lattice Green's function $G$ as one can write $Du$ as a convolution on the lattice, $Du = f_u \ast_{\Lambda} DG$, that is,
\[Du(x) = \sum_{z \in \Lambda} f_u(z)  DG(x-z).\]
The proof of (b), (c), and (d) is part of a full theory developed in \cite{MainPaper}. All the details as well as further generalisations will be presented there.
\end{proof}

\Cref{structuretheorem} shows that, to prove the main results in \Cref{symsec,BCC}, in addition to the decay of $f_u$ established in \Cref{sec:decay_ref}, we also need to analyse its moments.

\begin{theorem} \label{moments}
In the setting of \Cref{model}. Let $[u] \in \Hcc$ inherit the rotational symmetry \cref{eq:rotsymu} and let $f_u$ denote the resultant linear residual \cref{eq:linearresidual}.  Then we have $\sum_x f_u =0$, $\sum_x f_u x =0$, and $\sum_x f_u x \otimes x \propto \Id$ for some $c \in \R$, provided the sums converge absolutely.
\end{theorem}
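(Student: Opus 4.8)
The plan is to first show that the linear residual inherits the rotational symmetry of $u$, and then to read off all three moment identities from this symmetry together with the divergence structure of $f_u$. Writing out \cref{eq:linearresidual} explicitly,
\[
  f_u(x) = \sum_{\rho,\sigma \in \Rc} \nabla^2 V(0)_{\rho\sigma}\big(D_\sigma u(x-\rho) - D_\sigma u(x)\big),
\]
the central step is the \emph{symmetry lemma}: if $u(L_\La x) = u(x)$, then $f_u(L_\La x) = f_u(x)$. This rests on two elementary identities for the affine rotation $L_\La x = Q(x-\hat{x})+\hat{x}$, namely $L_\La(x + Q^{-1}\sigma) = L_\La x + \sigma$ and $L_\La(x - Q^{-1}\rho) = L_\La x - \rho$; combined with $u\circ L_\La = u$ they give $D_\sigma u(L_\La z) = D_{Q^{-1}\sigma}u(z)$ for every $z$. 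Substituting this into the formula for $f_u(L_\La x)$, reindexing $\rho \mapsto Q\rho$ and $\sigma \mapsto Q\sigma$ (legitimate since $Q\Rc = \Rc$), and using the rotational invariance $\nabla^2 V(0)_{\rho\sigma} = \nabla^2 V(0)_{Q\rho\,Q\sigma}$ which follows from \cref{eq:rotsym} (exactly as exploited in \Cref{D2W-cor}), all factors of $Q$ cancel and one recovers $f_u(x)$. Equivalently, the function $F(y) := f_u(y+\hat{x})$ on the $Q$-invariant lattice $\La - \hat{x}$ satisfies $F(Qy) = F(y)$.

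Turning to the moments, I would dispatch the zeroth moment first, as it does not use symmetry: since $f_u = -\divRc(\nabla^2 V(0)[\DRc u])$ is a discrete divergence, summing over $\La \cap B_R$ leaves only boundary terms, which vanish as $R \to \infty$ by the $\ell^2$-decay of $\DRc u$ together with the assumed absolute convergence, giving $\sum_x f_u = 0$. Because of this, the moments taken about $\hat{x}$ agree with those about the origin (the lower-order correction terms drop out), so it suffices to prove $\sum_y F(y)\,y = 0$ and $\sum_y F(y)\, y\otimes y \propto \Id$. For the first moment, the change of variables $y \mapsto Qy$ (a bijection of $\La - \hat{x}$, valid under absolute convergence) together with $F(Qy)=F(y)$ yields
\[
  M_1 := \sum_y F(y)\, y = \sum_y F(Qy)\,(Qy) = Q \sum_y F(y)\, y = Q M_1,
\]
so $(\Id - Q)M_1 = 0$; as $Q$ is a rotation through $\pi/2$ or $2\pi/3$ it has no eigenvalue $1$, whence $M_1 = 0$.

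For the second moment the identical substitution gives $M_2 := \sum_y F(y)\, y\otimes y = Q M_2 Q^T = (Q\otimes Q)M_2$, so $M_2$ is a symmetric, $Q^{\otimes 2}$-invariant two-tensor. Since $N \geq 3$ for both lattices, \Cref{tensors-lemma}(a) identifies this invariant space as $\spano \Id$, and therefore $M_2 = c\,\Id$ for some $c \in \R$, completing the proof.

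The genuinely delicate point is the symmetry lemma: keeping track of $Q^{\pm 1}$ as it passes through the double difference $D_\sigma u(\cdot - \rho)$ and correctly reindexing $\Rc$, so that the rotational invariance of $\nabla^2 V(0)$ can be applied. The remaining analytic care lies in justifying the rearrangement in the boundary argument for $\sum_x f_u = 0$ and the change of variables in $M_1, M_2$ purely from the absolute-convergence hypothesis; once the symmetry of $f_u$ is in hand, the three moment identities themselves follow from the short linear-algebra arguments above.
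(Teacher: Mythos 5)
Your proof is correct, and for the symmetry lemma and for the first and second moments it essentially coincides with the paper's argument: the identity $D_\sigma u(L_\La z) = D_{Q^{-1}\sigma}u(z)$, the reindexing over $Q\Rc = \Rc$, and the invariance $\nabla^2V(0)_{\rho\sigma} = \nabla^2V(0)_{Q\rho\,Q\sigma}$ are precisely the steps used there; your fixed-point argument $(\Id - Q)M_1 = 0$ with $1 \notin \sigma(Q)$ is trivially equivalent to the paper's group average (which uses $\sum_{j=0}^{N-1}Q^j = 0$), and both second-moment arguments reduce to \Cref{tensors-lemma}(a). The genuine difference is the zeroth moment. The paper proves $\sum_x f_u = 0$ by testing with a smooth cut-off $\eta_M$ and invoking the \emph{equilibrium equation} $\divRc \nabla V(\DRc\hat u + \DRc u) = 0$ to split the sum into a Taylor remainder of size $O(M^{-1})$ plus a linearised term in $\hat u$ alone, which is then converted into a continuum integral and annihilated by Gauss's theorem together with $\nabla\hat u \cdot \nu = 0$ on circles around $\hat x$. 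Your argument uses instead only the discrete-divergence structure of $f_u$ and $\DRc u \in \ell^2$; it is more elementary and, notably, works under exactly the stated hypotheses of \Cref{moments}, whereas the paper's computation implicitly assumes that $\hat u + u$ is a critical point --- an assumption present in all applications of the theorem (with predictor $\hat u$, or $\hat u + u_1 + u_2$ in the BCC case) but absent from its statement. One detail you should spell out: after telescoping, the boundary contribution at radius $R$ is a sum of $O(R)$ values of an $\ell^2$ function, bounded via Cauchy--Schwarz by $C R^{1/2}\lVert \DRc u\rVert_{\ell^2(A_R)}$ over an annulus $A_R$ of bounded width; since $\sum_R \lVert \DRc u\rVert^2_{\ell^2(A_R)} < \infty$, this vanishes in general only along a subsequence $R_k \to \infty$, and it is exactly the assumed absolute convergence of $\sum_x f_u$ that allows the limit along this subsequence to identify the full sum --- so your appeal to absolute convergence is placed correctly but carries more weight than your phrasing ``vanish as $R \to \infty$'' suggests.
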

\begin{proof}
We begin with $\sum_x f_u =0$. A version of this statement is already needed in \Cref{energysmooth} since it is directly linked with the net-force of the system. \Cref{energysmooth} was established in \cite{EOS2016}. As there was a gap in the proof, namely a proof of the specific claim $\sum_x f_u =0$ in question here, let us give the details in our specific case: Let $\eta$ be a smooth cut-off function with $\eta(x)=1$ for $\lvert x \rvert \leq 1$ and $\eta(x)=0$ for $\lvert x \rvert \geq 2$ and let $\eta_M(x)=\eta \big( \frac{x}{M}\big)$. Then we have
\begin{align*}
\sum_x f_u &= \lim_{M \to \infty} \sum_x f_u \eta_M\\
&=\lim_{M \to \infty} \sum_x \divRc \big(\nabla V(\DRc \hat{u} +\DRc u) -\nabla V(0) - \nabla^2V(0)[\DRc \hat{u} +\DRc u] \big) \eta_M\\
&\qquad  + \lim_{M \to \infty} \sum_x \divRc \nabla^2V(0)[\DRc \hat{u}] \eta_M \\
&=-\lim_{M \to \infty} \sum_x (\nabla V(\DRc \hat{u} +\DRc u)-\nabla V(0) - \nabla^2V(0))[\DRc \hat{u} +\DRc u])[\DRc \eta_M]\\
&\qquad  - \lim_{M \to \infty} \sum_x \nabla^2V(0)[\DRc \hat{u}, \DRc \eta_M] \\
&=: \lim_{M \to \infty} A_M  + B_M.
\end{align*}

Since the support of $\DRc\eta_M$ is contained in
$\{x\colon M-C \leq \lvert x \rvert \leq 2M+C\}$, for some fixed $C>0$, the first term, $A_M$, can be estimated as a remainder in a Taylor expansion by
\begin{align*}
  |A_M|&=
\Big\lvert \sum_x (\nabla V(\DRc \hat{u} +\DRc u)-\nabla V(0) - \nabla^2V(0))[\DRc \hat{u} +\DRc u])[\DRc \eta_M] \Big\rvert \\
&\lesssim \sum_x \lvert \DRc \hat{u} +\DRc u \rvert^2 \lvert \DRc \eta_M \rvert\\
&\lesssim M^2 M^{-3} + \lVert u \rVert_{\Hcc}^2 M^{-1}\\
&\lesssim M^{-1}.
\end{align*}
For the second term, $B_M$, note that $M-C \leq \lvert x \rvert \leq 2M+C$ implies $D\hat{u} = \nabla \hat{u} \cdot \mathcal{R} + O(M^{-2})$ and $D\eta_M = \nabla \eta_M \cdot \mathcal{R} + O(M^{-2})$. Estimating also the ``quadrature error'' (replacing
the sum by an integral) we obtain
\begin{align*}
  B_M &= \frac{1}{\det A_\Lambda}\int_{\R^2}  \nabla^2V(0)[\nabla \hat{u} \cdot \mathcal{R}, \nabla \eta_M \cdot \mathcal{R}]
    + O(M^{-1}) \\
    &= \int_{\R^2 \backslash B_1(0)}  \nabla^2W(0)[\nabla \hat{u}, \nabla \eta_M]
      + O(M^{-1}),
\end{align*}
where we used the fact that $\nabla\eta_M = 0$ on $B_1(0)$ for $M$ sufficiently large.
Applying Gauß's theorem as well as the fact that $\nabla \hat{u}(x)$ is always orthogonal to $\nu$, we obtain
\begin{align*}
  B_M &= \int_{\partial B_1(\hat{x})}  \nabla^2W(0)[\nabla \hat{u}] \cdot \nu \,dS(x)
    + O(M^{-1}) \\
  &= c_{\rm lin} \int_{\partial B_1(\hat{x})}  \nabla \hat{u} \cdot \nu \,dS(x)
    + O(M^{-1}) \\
  &= O(M^{-1}).
\end{align*}
Thus, we have shown that
\[
  \sum_x f_u = \lim_{M \to \infty} (A_M + B_M) = 0.
\]

To prove our claims about the first and second moments, we first show that rotational symmetry of
$\bar{u}$ implies rotational symmetry of $f_u$, i.e., $f_u(L_Q x) = f_u(x)$:
  \begin{align*}
   f_u(L_Q x) &= \sum_{\rho,\sigma\in\Rc} \nabla^2V(0)_{\rho,\sigma}(D_{\sigma} u(L_Q x - \rho) - D_{\sigma}u(L_Q x))\\
   &= \sum_{\rho,\sigma\in\Rc}\nabla^2V(0)_{Q\rho,Q\sigma}(D_{Q\sigma} u(L_Qx - Q\rho) - D_{Q\sigma}u(L_Qx))\\
   &= \sum_{\rho,\sigma\in\Rc}\nabla^2V(0)_{Q\rho,Q\sigma}(D_{Q\sigma} u(L_Q(x - \rho)) - D_{Q\sigma}u(L_Qx))\\
   &= \sum_{\rho,\sigma\in\Rc}\nabla^2V(0)_{Q\rho,Q\sigma}(D_{\sigma} u(x - \rho) - D_{\sigma}u(x))\\
   &= \sum_{\rho,\sigma\in\Rc}\nabla^2V(0)_{\rho,\sigma}(D_{\sigma} u(x - \rho) - D_{\sigma}u(x))\\
   &=f_u(x),
  \end{align*}
  where we have used $Q\Rc = \Rc$, $D_{\sigma}u(x) = D_{Q\sigma}u(L_Qx)$, as well as the rotational symmetry of $V$, \cref{eq:rotsym}. Let $N=3$ for the triangular lattice and $N=4$ for the quadratic lattice, then
\begin{align*}
  \sum_x f_u(x) x &= \sum_x f_u(x) (x-\hat{x})\\
  &= \frac{1}{N}\sum_x \sum_{j=0}^{N-1} f_u(L_Q^j x) Q^j (x-\hat{x})\\
  &= \frac{1}{N}\sum_x f_u(x) \sum_{j=0}^{N-1} Q^j (x-\hat{x})\\
  &=0
\end{align*}
and similarly for the second moment,
\begin{align*}
  \sum_x f_u(x) x \otimes x &=  \sum_x f_u(x) (x-\hat{x}) \otimes (x-\hat{x})  \\
  &=\frac{1}{N}\sum_x \sum_{j=0}^{N-1} f_u(L_Q^j x) (Q^j (x-\hat{x})) \otimes (Q^j (x-\hat{x}))\\
  &= \sum_x f_u(x) P((x-\hat{x}) \otimes (x-\hat{x}))\\
  &= \Id \Big( \frac{1}{2}\sum_x f_u(x) \lvert x-\hat{x} \rvert^2 \Big)
\end{align*}
where we used \Cref{tensors-lemma} in the last step.
\end{proof}

Finally, we can combine all the foregoing results to prove our main theorems.

\begin{proof}[Proof of \Cref{thm:highsym}]
Let us start with the square lattice. According to \Cref{decaylinearresidual}, we have $\lvert f_{\bar{u}} \rvert \lesssim \lvert x \rvert^{-4}$. In particular, $\sum_x f_{\bar{u}}$ and $\sum_x f_{\bar{u}} x$ converge. Due to \Cref{moments}, $\sum_x f_{\bar{u}} =0$ and $\sum_x f_{\bar{u}} x =0$. Hence, by \Cref{structuretheorem}
\[
\lvert D^j \bar{u}(x) \rvert \lesssim \lvert x \rvert^{-2-j} \log \lvert x \rvert.
\]
for $j=1,2$ and $\lvert x \rvert$ large enough.

For the triangular lattice \Cref{decaylinearresidual} gives us $\lvert f_{\bar{u}} \rvert \lesssim \lvert x \rvert^{-5} \log \lvert x \rvert \lesssim \lvert x \rvert^{-4- \eps}$. In particular, $\sum_x f_{\bar{u}}$, $\sum_x f_{\bar{u}}$, and $\sum_x f_{\bar{u}} x \otimes x$ converge. Due to \Cref{moments}, $\sum_x f_{\bar{u}} =0$, $\sum_x f_{\bar{u}} x =0$, and $\sum_x f_{\bar{u}} x \otimes x =c\Id$. At first, by \Cref{structuretheorem} we conclude that
\[
\lvert D^j \bar{u}(x) \rvert \lesssim \lvert x \rvert^{-2-j}.
\]
for $j=1,2$ and $\lvert x \rvert$ large enough. But then \Cref{decaylinearresidual} gives the stronger result $\lvert f_{\bar{u}} \rvert \lesssim \lvert x \rvert^{-6} \log^2 \lvert x \rvert \leq \lvert x \rvert^{-5 -\eps}$, so that by \Cref{structuretheorem} we indeed get
\[
\lvert D\bar{u}(x) \rvert \lesssim \lvert x \rvert^{-3-j}
\]
for $j=1,2$ and $\lvert x \rvert$ large enough.
\end{proof}

\begin{proof}[Proof of \Cref{BCC-result}]
  As in the triangular lattice case we have to argue in several steps. As a starting point \Cref{decaylinearresidual}  shows that $\lvert f_{\bar{u}_{\rm rem}} \rvert \lesssim \lvert x \rvert^{-4} \log \lvert x \rvert \leq \lvert x \rvert^{-3-\eps} $. In particular, $\sum_x f_{\bar{u}_{\rm rem}}$ and $\sum_x f_{\bar{u}_{\rm rem}} x$ converge. Due to \Cref{moments}, $\sum_x f_{\bar{u}_{\rm rem}} =0$ and $\sum_x f_{\bar{u}_{\rm rem}} x =0$. With \Cref{structuretheorem} we find
\[
\lvert D^j \bar{u}_{\rm rem}(x) \rvert \lesssim \lvert x \rvert^{-1-j}
\]
for $j=1,2$, which in turn gives the improved estimate $\lvert f_{\bar{u}_{\rm rem}} \rvert \lesssim \lvert x \rvert^{-4}$ in \Cref{decaylinearresidual}. Going back to \Cref{structuretheorem} we now get
\[
\lvert D^j \bar{u}_{\rm rem}(x) \rvert \lesssim \lvert x \rvert^{-2-j} \log \lvert x \rvert.
\]
Another iteration of \Cref{decaylinearresidual,structuretheorem} improves this to
\[
\lvert D^j \bar{u}_{\rm rem}(x) \rvert \lesssim \lvert x \rvert^{-2-j}.
\]
Finally, by \Cref{decaylinearresidual}, we now find $\lvert f_{\bar{u}_{\rm rem}} \rvert \lesssim \lvert x \rvert^{-5}$. In particular, $\sum_x f_{\bar{u}_{\rm rem}} x \otimes x$ converges as well and due to \Cref{moments}, $\sum_x f_{\bar{u}_{\rm rem}} x \otimes x=c \Id= c' \nabla^2 W(0)$. A last use of \Cref{structuretheorem} gives the desired result,
\[
\lvert D^j \bar{u}_{\rm rem}(x) \rvert \lesssim \lvert x \rvert^{-3-j} \log \lvert x \rvert
\]
for $j=1,2$ and $\lvert x \rvert$ large enough.
\end{proof}

\section*{Acknowledgements}
We thank Tom Hudson and Petr Grigorev for insightful discussions on symmetries of screw dislocations in BCC.

\bibliographystyle{siamplain}
\bibliography{references}

\begin{thebibliography}{10}

\bibitem{AdLGP2014}
{\sc R.~Alicandro, L.~De~Luca, A.~Garroni, and M.~Ponsiglione}, {\em
  Metastability and dynamics of discrete topological singularities in two
  dimensions: A $\gamma$-convergence approach}, Archive for Rational Mechanics
  and Analysis, 214 (2014), pp.~269--330,
  \url{https://doi.org/10.1007/s00205-014-0757-6}.

\bibitem{MainPaper}
{\sc J.~Braun, T.~Hudson, and C.~Ortner}.
\newblock in preparation.

\bibitem{braun16static}
{\sc J.~Braun and B.~Schmidt}, {\em Existence and convergence of solutions of
  the boundary value problem in atomistic and continuum nonlinear elasticity
  theory}, Calculus of Variations and Partial Differential Equations, 55
  (2016), p.~125, \url{https://doi.org/10.1007/s00526-016-1048-x}.

\bibitem{ChenOrtner2016a}
{\sc H.~Chen and C.~Ortner}, {\em {QM/MM} methods for crystalline defects.
  {P}art 1: Locality of the tight binding model}, Multiscale Model. Simul.,
  14(1) (2016), \url{https://doi.org/10.1137/15M1022628}.

\bibitem{2015-qmtb2}
{\sc H.~Chen and C.~Ortner}, {\em {QM/MM} methods for crystalline defects.
  {P}art 2: Consistent energy and force-mixing}, Multiscale Model. Simul.,
  15(1) (2017).

\bibitem{emingstatic}
{\sc W.~E and P.~Ming}, {\em {Cauchy}-{Born} rule and the stability of
  crystalline solids: Static problems}, Arch.\ Rational Mech.\ Anal., 183
  (2007), pp.~241--297, \url{https://doi.org/10.1007/s00205-006-0031-7}.

\bibitem{EOS2016}
{\sc V.~Ehrlacher, C.~Ortner, and A.~V. Shapeev}, {\em Analysis of boundary
  conditions for crystal defect atomistic simulations}, Archive for Rational
  Mechanics and Analysis, 222 (2016), pp.~1217--1268,
  \url{https://doi.org/10.1007/s00205-016-1019-6}.

\bibitem{hirth-lothe}
{\sc J.~P. Hirth and J.~Lothe}, {\em Theory of dislocations}, New York Wiley,
  second~ed., 1982.

\bibitem{Hudson2017}
{\sc T.~Hudson}, {\em Upscaling a model for the thermally-driven motion of
  screw dislocations}, Archive for Rational Mechanics and Analysis, 224 (2017),
  pp.~291--352, \url{https://doi.org/10.1007/s00205-017-1076-5}.

\bibitem{HO2012}
{\sc T.~Hudson and C.~Ortner}, {\em On the stability of {Bravais} lattices and
  their {Cauchy}-{Born} approximations}, ESAIM:M2AN, 46 (2012), pp.~81--110,
  \url{https://doi.org/10.1051/m2an/2011014}.

\bibitem{HO2014}
{\sc T.~Hudson and C.~Ortner}, {\em Existence and stability of a screw
  dislocation under anti-plane deformation}, Archive for Rational Mechanics and
  Analysis, 213 (2014), pp.~887--929,
  \url{https://doi.org/10.1007/s00205-014-0746-9}.

\bibitem{bcc-easy-hard}
{\sc M.~Itakura, H.~Kaburaki, M.~Yamaguchi, and T.~Okita}, {\em The effect of
  hydrogen atoms on the screw dislocation mobility in bcc iron: {A}
  first-principles study}, Acta Materialia, 61 (2013), pp.~6857--6867,
  \url{https://doi.org/10.1016/j.actamat.2013.07.064}.

\bibitem{LL59}
{\sc L.~D. Landau and E.~M. Lifshitz}, {\em Theory of Elasticity}, Volume 7 of
  Course of Theoretical Physics, Pergamon Press, second english edition~ed.,
  1959.
\newblock Translated from the Russian by J.B. Sykes and W. H. Reid.

\bibitem{2013-atc.acta}
{\sc M.~Luskin and C.~Ortner}, {\em Atomistic-to-continuum-coupling}, Acta
  Numerica,  (2013), \url{https://doi.org/10.1017/S0962492913000068}.

\bibitem{ortnertheil13}
{\sc C.~Ortner and F.~Theil}, {\em Justification of the {Cauchy}-{Born}
  approximation of elastodynamics}, Arch.\ Rational Mech.\ Anal., 207 (2013),
  pp.~1025--1073, \url{https://doi.org/10.1007/s00205-012-0592-6}.

\bibitem{2016-precon1}
{\sc D.~Packwood, J.~Kermode, L.~Mones, N.~Bernstein, J.~Woolley, N.~Gould,
  C.~Ortner, and G.~Csányi}, {\em A universal preconditioner for simulating
  condensed phase materials}, The Journal of Chemical Physics, 144 (2016),
  p.~164109, \url{https://doi.org/10.1063/1.4947024}.

\bibitem{Ponsiglione2007}
{\sc M.~Ponsiglione}, {\em Elastic energy stored in a crystal induced by screw
  dislocations: From discrete to continuous}, SIAM J Math Anal, 39 (2007),
  pp.~449--469, \url{https://doi.org/10.1137/060657054}.

\bibitem{Wang2014}
{\sc J.~Wang, Y.~L. Zhou, M.~Li, and Q.~Hou}, {\em A modified {W}–{W}
  interatomic potential based on ab initio calculations}, Modelling and
  Simulation in Materials Science and Engineering, 22 (2014), p.~015004,
  \url{https://doi.org/10.1088/0965-0393/22/1/015004}.

\end{thebibliography}

\end{document}